
\documentclass{amsart}

 \usepackage{mathtools}
 \usepackage{amsthm,amssymb,mathrsfs,enumitem,wasysym,hyperref}

 \usepackage{esint,enumitem}

 \usepackage[numbers,sort&compress]{natbib}
 \usepackage{mathscinet,todonotes} 

\usepackage{tikz,mathrsfs}
\usetikzlibrary{arrows,shapes}
\usetikzlibrary{decorations.pathreplacing}

 \numberwithin{equation}{section}
 \numberwithin{figure}{section}
 \theoremstyle{plain}
 \newtheorem{thm}{Theorem}[section]
   \theoremstyle{definition}
   \newtheorem{defn}[thm]{Definition}
   \newtheorem*{defn*}{Definition}
   \theoremstyle{remark}
   \newtheorem*{rem*}{Remark}
   \theoremstyle{plain}
   \newtheorem{lem}[thm]{Lemma}
   \theoremstyle{plain}
   \newtheorem{prop}[thm]{Proposition}
   
   \theoremstyle{definition}

   \newcounter{step}


 \makeatletter
 \newcommand{\norm}{\@ifstar{\@normb}{\@normi}}
 \newcommand{\@normb}[2]{\left\Vert{#1}\right\Vert_{#2}}
 \newcommand{\@normi}[2]{\Vert{#1}\Vert_{#2}}
 \makeatother

 \global\long\def\tSob#1{{H}^{#1}}
 \global\long\def\thSob#1{\dot{H}^{#1}}

 \global\long\def\Leb#1{L_{#1}}

 \newcommand{\action}[1]{\left<#1 \right>}

 \RequirePackage{bm}

 \newcommand{\boldb}{\bm{b}}
 \newcommand{\boldc}{\bm{c}}
 \newcommand{\boldu}{\bm{u}}
 \newcommand{\boldw}{\bm{w}}
 
 \newcommand{\boldF}{\mathbf{F}}
 \newcommand{\boldG}{\mathbf{G}}
 \newcommand{\boldv}{\bm{v}}

 \DeclareMathOperator{\Div}{div}
 \newcommand{\relphantom}[1]{\mathrel{\phantom{#1}}}
 
 \newcommand{\myd}[1]{\,d{#1}}

 \newcommand{\Lor}[2]{L_{#1,#2}}

 \DeclareMathOperator{\supp}{supp}
 
\renewcommand{\Re}{\operatorname{Re}}


\makeatletter
\def\@tocline#1#2#3#4#5#6#7{\relax
  \ifnum #1>\c@tocdepth 
  \else
    \par \addpenalty\@secpenalty\addvspace{#2}%
    \begingroup \hyphenpenalty\@M
    \@ifempty{#4}{%
      \@tempdima\csname r@tocindent\number#1\endcsname\relax
    }{%
      \@tempdima#4\relax
    }%
    \parindent\z@ \leftskip#3\relax \advance\leftskip\@tempdima\relax
    \rightskip\@pnumwidth plus4em \parfillskip-\@pnumwidth
    #5\leavevmode\hskip-\@tempdima
      \ifcase #1
       \or\or \hskip 1em \or \hskip 2em \else \hskip 3em \fi%
      #6\nobreak\relax
    \hfill\hbox to\@pnumwidth{\@tocpagenum{#7}}\par
    \nobreak
    \endgroup
  \fi}
\makeatother

\newcommand{\Proj}{\mathbb{P}}
\newcommand{\inner}[1]{\left<#1\right>}
\newcommand{\NLor}[2]{{#1,#2}}
\newcommand{\NLeb}[1]{{#1}}

\DeclareFontFamily{U}{mathx}{}
\DeclareFontShape{U}{mathx}{m}{n}{<-> mathx10}{}
\DeclareSymbolFont{mathx}{U}{mathx}{m}{n}
\DeclareMathAccent{\widehat}{0}{mathx}{"70}
\DeclareMathAccent{\widecheck}{0}{mathx}{"71}

\newcommand{\dsp}{\hspace{0.1em}}

 \title[Stokes-Magneto system with fractional diffusions]{Global existence and uniqueness of weak solutions of a Stokes-Magneto system with fractional diffusions}
 \author{Hyunseok Kim}
 \address{Department of Mathematics, Sogang University, Seoul, 04107, Republic of Korea}
 \email[Corresponding Author]{kimh@sogang.ac.kr}
 \thanks{The authors were supported by Basic Science
Research Program through the National Research Foundation of Korea (NRF) funded by
the Ministry of Education (No. NRF-2016R1D1A1B02015245).}

 \author{Hyunwoo Kwon}
 \address{Department of Mathematics, Sogang University, Seoul, 04107, Republic of Korea}
\curraddr{Division of Applied Mathematics, Brown University, 182 George Street, Providence, RI 02912, USA}
\email{hyunwoo\_kwon@brown.edu}

\subjclass{35Q35;76W05}
\keywords{Global existence; Uniqueness; Weak solutions; Fractional diffusions}

 \allowdisplaybreaks
 \raggedbottom
\begin{document}

\begin{abstract}
We consider a Stokes-Magneto system  in $\mathbb{R}^d$ ($d\geq 2$) with fractional diffusions   $\Lambda^{2\alpha}\boldsymbol{u}$ and $\Lambda^{2\beta}\boldsymbol{b}$ for the velocity $\boldsymbol{u}$ and the magnetic field $\boldsymbol{b}$, respectively. Here $\alpha,\beta$ are positive constants and $\Lambda^s = (-\Delta)^{s/2}$ is the fractional Laplacian of order $s$. We establish global existence of weak solutions of the  Stokes-Magneto system   for any initial data in $\Leb{2}$ when $\alpha$, $\beta$ satisfy $1/2<\alpha<(d+1)/2$, $\beta >0$,
and $\min\{\alpha+\beta,2\alpha+\beta-1\}>d/2$. It is also shown that weak solutions are   unique  if $\beta \geq 1$ and $\min \{\alpha+\beta,2\alpha+\beta-1\}\geq d/2+1$, in addition.
\end{abstract}
\maketitle

\section{Introduction}

For several decades, many authors have studied linear and nonlinear equations with fractional diffusions from a mathematical point of view and applied them  
to diverse fields such as biology \cite{BRR13}, financial mathematics \cite{A04}, and probability \cite{SV03}. Particularly important models from fluid mechanics are generalized Navier-Stokes equations (see e.g \cite{L67,L69,CDD18,T09}) and surface quasi-geographic equations (see e.g. \cite{CCCGW12,CV10}).

In this paper, we consider the following initial value  problem for a Stokes-Magneto system with fractional diffusions in $\mathbb{R}^d$, $d\geq 2$:
\begin{equation}\label{eq:MHD-relaxed}
\left\{
\begin{alignedat}{2}
\nu \Lambda^{2\alpha} \boldu +\nabla {p_*} &=(\boldb\cdot \nabla)\boldb&&\quad \text{in }  \mathbb{R}^d\times (0,\infty),\\
\partial_t \boldb+\eta \Lambda^{2\beta} \boldb +(\boldu\cdot \nabla)\boldb &=(\boldb\cdot \nabla)\boldu &&\quad \text{in }\mathbb{R}^d \times(0,\infty),\\
\Div \boldu =\Div \boldb &=0 &&\quad \text{in } \mathbb{R}^d\times (0,\infty),\\
\boldb(\cdot,0)&=\boldb_0 &&\quad \text{on } \mathbb{R}^d.
\end{alignedat}
\right.
\end{equation}
Here $\boldu:\mathbb{R}^d\times [0,\infty)\rightarrow \mathbb{R}^d$ is the velocity field, $\boldb: \mathbb{R}^d\times [0,\infty)\rightarrow \mathbb{R}^d$ is the magnetic field, and ${p_*}:\mathbb{R}^d\times [0,\infty)\rightarrow \mathbb{R}$ denotes the total pressure: ${p_*} = p  + \frac{1}{2}|\boldb|^2$  with $p$ being the pressure on the fluid. The positive  constants $\nu$ and $\eta$ stand for the viscosity constant and the magnetic diffusivity, respectively.  For $s\in \mathbb{R}$, $\Lambda^s = (-\Delta)^{s/2}$ denotes the fractional Laplacian
 of order $s$ (see Section 2 for more details).

Our model \eqref{eq:MHD-relaxed} was motivated by Moffatt \cite{M85,M21} who proposed the magnetic relaxation method to construct magnetohydrodynamics (MHD) equilibria from a given magnetic field $\boldb_0$. More precisely, Moffatt \cite{M85} first suggested the method of magnetic relaxation to construct a magnetic equilibrium from a solution of ideal MHD equations by letting $t\rightarrow \infty$. Later, in 2009, Moffatt suggested another possible way to construct an MHD equilibrium by introducing the system \eqref{eq:MHD-relaxed} with $\alpha=1$ and $\eta=0$. See Beekie-Friedlander-Vicol \cite{BFV21}, Constantin-Pasqualotto \cite{CP22}, and references therein for further motivation of magnetic relaxation.

In comparison to \eqref{eq:MHD-relaxed}, we recall the  generalized MHD system:
\begin{equation}\label{eq:MHD}
\left\{
\begin{alignedat}{2}
\partial_t\boldu+\nu \Lambda^{2\alpha} \boldu +(\boldu\cdot\nabla)\boldu+\nabla {p_*} &=(\boldb\cdot \nabla)\boldb&&\quad \text{in } \mathbb{R}^d\times (0,\infty),\\
\partial_t \boldb+\eta \Lambda^{2\beta} \boldb +(\boldu\cdot \nabla)\boldb &=(\boldb\cdot \nabla)\boldu &&\quad \text{in }\mathbb{R}^d\times(0,\infty),\\
\Div \boldu=\Div \boldb&=0 &&\quad \text{in } \mathbb{R}^d\times (0,\infty),\\
\boldu(\cdot,0)=\boldu_0,\quad \boldb(\cdot,0)&=\boldb_0 &&\quad \text{on } \mathbb{R}^d.
\end{alignedat}
\right.
\end{equation}
When $\nu,\eta>0$ and $\alpha=\beta=1$, Duvaut-Lions \cite{DL72} proved global existence of weak solutions of \eqref{eq:MHD} with $\Leb{2}$-initial data.   Later, Wu \cite{W03} extended this result to arbitrary $\alpha,\beta>0$ and also proved existence of global classical solutions  for smooth initial data when $\alpha,\beta$ satisfy $\alpha, \beta \geq d/4+1/2$. The existence results in \cite{W03} were later improved by the same author \cite{W11} when $\alpha\geq d/4+1/2$, $\beta>0$, and $\alpha + \beta \geq d/2+1$. In addition to these results, many authors proved global regularity of classical solutions of \eqref{eq:MHD} in two-dimensional spaces under various hypotheses on $\alpha$ and $\beta$; see Yamazaki \cite{Y19} and references therein. On the other hand, when $\nu=0$ and $\eta>0$, Kozono \cite{K89} proved local existence and uniqueness of classical solutions of \eqref{eq:MHD} when $\beta=1$. This result was extended to any $\beta>0$ by Wu  \cite{W11} for sufficiently smooth initial data. Recently, Fefferman et.al \cite{FMRR14,FMRR17} established local existence of strong solutions of \eqref{eq:MHD} when  $\nu>0$, $\eta=0$, and $\alpha=1$, which was extended by Kim-Zhou \cite{KZ20} to general $\alpha$.

In contrast to the generalized MHD system, there are few results for the Stokes-Magneto system with or without fractional diffusions. When $\nu,\eta>0$, $\alpha=\beta=1$, and $d=2,3$, McCormick-Robinson-Rodrigo \cite{MRR14} proved global existence of weak solutions of \eqref{eq:MHD-relaxed}. For the two-dimensional case, they also proved uniqueness and regularity of weak solutions. Recently, Ji-Tan \cite{JT21} proved global existence of strong solutions of \eqref{eq:MHD-relaxed}  when $d=3$, $\alpha=1$, and $\beta\geq 3/2$.  On the other hand, when $\nu>0$, $\eta=0$, and $\alpha=0$, Brenier \cite{B14} proved  global existence of dissipative weak solutions on the two-dimensional torus $\mathbb{T}^2$. When $\nu>0$, $\eta=0$, and $\alpha=1$, Fefferman et.al. \cite{FMRR14} proved local existence and uniqueness of strong solutions of \eqref{eq:MHD-relaxed} on $\mathbb{R}^d$, $d=2,3$.  Recently, when $\nu>0$, $\eta=0$, and $\alpha>d/2+1$, Beekie-Friedlander-Vicol \cite{BFV21}  established global existence of strong solutions of \eqref{eq:MHD-relaxed}  on the torus $\mathbb{T}^d$, $d=2,3$. Moreover, they also investigated 2D stability and 3D instability as well as the long-time behavior of solutions.

In this paper, we establish  global existence and uniqueness of weak solutions of the Stokes-Magneto system \eqref{eq:MHD-relaxed} with fractional diffusions. To elaborate a motivation of our notion of weak solutions, we multiply the first equation  in \eqref{eq:MHD-relaxed} by $\boldu$ and the second equation by  $\boldb$, and then integrate the resulting equations on $\mathbb{R}^d$. Using the divergence-free condition  on $\boldu$ and $\boldb$, we have
\[	\nu\int_{\mathbb{R}^d} |\Lambda^\alpha \boldu|^2 \myd{x}=-\int_{\mathbb{R}^d} (\boldb \otimes \boldb):\nabla \boldu \myd{x}	\]
and
\[	\frac{1}{2}\frac{d}{dt}\int_{\mathbb{R}^d} |\boldb|^2 \myd{x} + \eta\int_{\mathbb{R}^d} |\Lambda^\beta \boldb|^2 \myd{x}=\int_{\mathbb{R}^d} (\boldb \cdot \nabla)\boldu \cdot \boldb \myd{x}.\]
Adding these two identities and integrating it in time, we derive the energy identity
\begin{equation}\label{eq:energy-equality}
\int_{\mathbb{R}^d}|\boldb(t)|^2\myd{x} + 2 \int_0^t\int_{\mathbb{R}^d} \left( \nu|\Lambda^\alpha \boldu(s)|^2+ \eta |{\Lambda^\beta \boldb(s)}|^2 \right) \myd{x}\myd{s}=\int_{\mathbb{R}^d}|\boldb_0|^2\myd{x}
\end{equation}
for all  $t \ge 0$. Furthermore, it will be shown in Subsection \ref{subsec:sol-formula} that if $1/2<\alpha<(d+1)/2$, then   $\boldu$ is given  by
\begin{equation}\label{sol-formula-int}
	\boldu=\mathbf{U}_{\alpha}*(\boldb\otimes \boldb)
\end{equation}
where  $\mathbf{U}_\alpha$ is a matrix-valued function satisfying
\[	|\mathbf{U}_{\alpha}(x)|\leq \frac{C}{|x|^{d+1-2\alpha}} \]
for some constant $C$ depending only on $d$ and $\alpha$. By Young's convolution inequality in weak spaces (Lemma \ref{lem:Young}), we deduce that
\begin{equation}\label{eq:weak-spaces-estimate}
\norm{\boldu(t)}{\Lor{d/(d+1-2\alpha)}{\infty}}\leq C \norm{\boldb(t)\otimes \boldb(t)}{\Leb{1}}\leq C\norm{\boldb(t)}{\Leb{2}}^2
\end{equation}
for some constant $C=C(d,\alpha)$. Here $\Leb{p}$  and $\Lor{p}{q}$ denote the Lebesgue spaces and Lorentz spaces  on $\mathbb{R}^d$, respectively, where $1\leq p<\infty$ and $1\leq q\leq \infty$. For $s\in \mathbb{R}$, let $\thSob{s}$ be the homogeneous Sobolev spaces on $\mathbb{R}^d$ (see Section \ref{sec:prelim} for details). Then motivated by   \eqref{eq:energy-equality} and \eqref{eq:weak-spaces-estimate}, we define weak solutions of \eqref{eq:MHD-relaxed} as follows.


\begin{defn}
Let  $1/2< \alpha < (d+1)/2$ and  $0< \beta < \infty$.  Suppose that $\boldb_0 \in \Leb{2}$, ${\rm div}\, \boldb_0 =0$ in $\mathbb{R}^d$, and   $0<T \le \infty$. Then a pair $(\boldu,\boldb )$ of vector fields  satisfying
\begin{align*}
 \boldu &\in \Leb{\infty}(0,T;\Lor{{d}/{(d+1-2\alpha)}}{\infty})\cap \Leb{2}(0,T;\thSob{\alpha}), \\
 \boldb &\in \Leb{\infty}(0,T;\Leb{2}) \cap \Leb{2}(0,T;\thSob{\beta})
\end{align*}
is called a \emph{weak solution} of \eqref{eq:MHD-relaxed} in $[0, T)$ if the following are satisfied:
\begin{enumerate}[label=\textnormal{(\arabic*)}]
\item  For all $\Phi \in C_{c}^\infty(\mathbb{R}^d;\mathbb{R}^d )$ with $\Div \Phi=0$,
\begin{equation}\label{eq:velocity}
 \int_{\mathbb{R}^d} \nu \Lambda^{\alpha} \boldu (t) \cdot \Lambda^\alpha \Phi \myd{x}=-\int_{\mathbb{R}^d} \boldb (t)\otimes \boldb (t): \nabla \Phi \myd{x}
\end{equation}
for almost all $t\in (0,T)$.
\item For all $\Phi \in C_c^\infty(\mathbb{R}^d\times[0,T);\mathbb{R}^d)$,
\begin{equation}\label{eq:magnetic}
\begin{aligned}
 \relphantom{=}\int_0^T \int_{\mathbb{R}^d} \boldb \cdot (-\partial_t \Phi  + \eta \Lambda^{2\beta}\Phi)\myd{x}dt & -\int_0^T\int_{\mathbb{R}^d} (\boldu\otimes \boldb-\boldb\otimes \boldu):\nabla \Phi \myd{x}dt\\
&= \int_{\mathbb{R}^d} \boldb_0 \cdot \Phi( 0) \,dx.
\end{aligned}
\end{equation}
\item  For all $\psi \in C_{c}^\infty(\mathbb{R}^d )$,
\[  \int_{\mathbb{R}^d}   \boldu (t) \cdot \nabla \psi  \myd{x}= \int_{\mathbb{R}^d}   \boldb (t) \cdot \nabla \psi  \myd{x} =0  \]
for almost all $t\in (0,T)$.  
\end{enumerate}
A weak solution of  \eqref{eq:MHD-relaxed} in $[0, \infty)$ will be called a \emph{global weak solution}.
\end{defn}

Now we are ready to present the main results of the paper. We  first establish existence of global weak solutions of \eqref{eq:MHD-relaxed} under a suitable assumption on $\alpha$ and $\beta$.
\begin{thm}\label{thm:GWP}
Let    $\alpha$ and $\beta$ satisfy
\[
   \frac{1}{2}<\alpha<\frac{d+1}{2},\quad \beta>0 ,\quad\mbox{and}\quad \min\{ \alpha+\beta,2\alpha+\beta-1\}>\frac{d}{2}.
\]
Then for any $\bm{b}_0\in \Leb{2}$ with ${\rm div}\, \bm{b}_0 =0$, there exists at least one  global  weak solution of   \eqref{eq:MHD-relaxed}.
\end{thm}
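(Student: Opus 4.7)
The plan is to construct weak solutions by the standard approximation–compactness scheme. Since the first equation of \eqref{eq:MHD-relaxed} recovers $\boldu$ from $\boldb$ via the nonlocal representation \eqref{sol-formula-int}, the system effectively reduces to a single nonlinear, nonlocal parabolic equation for $\boldb$ alone, which I will regularize, bound uniformly, and then pass to the limit in.

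First, I would introduce approximate problems indexed by $\epsilon>0$. A convenient choice is to mollify the transport and stretching terms with a standard mollifier $J_\epsilon$,
\[
  \partial_t \boldb^\epsilon + \eta\Lambda^{2\beta}\boldb^\epsilon + (J_\epsilon \boldu^\epsilon \cdot \nabla)J_\epsilon \boldb^\epsilon = (J_\epsilon \boldb^\epsilon\cdot\nabla) J_\epsilon \boldu^\epsilon,
\]
coupled to $\boldu^\epsilon = \mathbf{U}_\alpha * (\boldb^\epsilon\otimes\boldb^\epsilon)$ so that the first equation of \eqref{eq:MHD-relaxed} holds by construction. For fixed $\epsilon$ this is a globally well-posed ODE in an $L^2$-based Hilbert space; a Galerkin truncation in frequency would serve equally well.

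Next, I would derive $\epsilon$-uniform a priori estimates. Testing the equation for $\boldb^\epsilon$ against $\boldb^\epsilon$ and rewriting the stretching term through the first equation recovers the energy identity \eqref{eq:energy-equality}, yielding
\[
  \boldb^\epsilon \in \Leb{\infty}(0,T;\Leb{2})\cap \Leb{2}(0,T;\thSob{\beta}), \qquad \boldu^\epsilon \in \Leb{2}(0,T;\thSob{\alpha})
\]
uniformly, while \eqref{eq:weak-spaces-estimate} gives $\boldu^\epsilon \in \Leb{\infty}(0,T;\Lor{d/(d+1-2\alpha)}{\infty})$ uniformly as well. To apply Aubin–Lions I then need a uniform bound on $\partial_t \boldb^\epsilon$ in a space of the form $\Leb{r}(0,T;\tSob{-s}_{\loc})$ with $s>0$. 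The linear part $\eta\Lambda^{2\beta}\boldb^\epsilon$ is controlled by the $\Leb{2}(0,T;\thSob{\beta})$ bound, while the nonlinear fluxes $\boldu^\epsilon\otimes\boldb^\epsilon$ must be estimated by interpolating each factor between its energy and $\Leb{\infty}(\Leb{2})$ bounds and exploiting the $2\alpha-1$ elliptic gain for $\boldu^\epsilon$ encoded in \eqref{sol-formula-int}. A Sobolev-embedding bookkeeping, carried out in Lorentz scales so that \eqref{eq:weak-spaces-estimate} is used without loss, produces the required bound precisely when $\alpha+\beta>d/2$ controls the $\boldb\otimes\boldb$ factor hidden inside $\boldu^\epsilon$ and $2\alpha+\beta-1>d/2$ controls the remaining product $\boldu^\epsilon\otimes\boldb^\epsilon$ after the $2\alpha-1$ smoothing is used.

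Finally, a diagonal extraction gives a subsequence for which $\boldb^{\epsilon_k}$ converges weakly-$*$ in $\Leb{\infty}(\Leb{2})$, weakly in $\Leb{2}(0,T;\thSob{\beta})$, and, via Aubin–Lions applied locally in $x$, strongly in $\Lebloc{2}((0,T)\times\mathbb{R}^d)$. Strong convergence $\boldu^{\epsilon_k}\to\boldu$ in the same topology then follows from \eqref{sol-formula-int} and Lemma \ref{lem:Young} applied to $\boldb^{\epsilon_k}\otimes\boldb^{\epsilon_k}-\boldb\otimes\boldb$, and these convergences suffice to pass to the limit in the weak formulations \eqref{eq:velocity}–\eqref{eq:magnetic} against compactly supported test fields; the divergence-free conditions are inherited trivially. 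The main obstacle is precisely the nonlinear time-derivative estimate for $\boldb^\epsilon$: unlike the generalized MHD system, where $\boldu$ and $\boldb$ solve symmetric evolution equations, here $\boldu$ is a nonlinear nonlocal function of $\boldb$, so the regularity budget for the flux $\boldu\otimes\boldb$ must be split asymmetrically between its two factors, and carrying this out sharply in Lorentz–Sobolev scales is exactly what forces the two distinct quantitative constraints $\alpha+\beta>d/2$ and $2\alpha+\beta-1>d/2$.
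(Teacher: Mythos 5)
Your overall architecture (regularize, uniform energy bounds, a time-derivative bound plus Aubin--Lions for $\boldb$, recovery of $\boldu$ from \eqref{sol-formula-int}, passage to the limit) is the same as the paper's, but two of your steps have genuine gaps. First, the mollified system you write down does not close the energy estimate: testing $(J_\epsilon\boldu^\epsilon\cdot\nabla)J_\epsilon\boldb^\epsilon$ against $\boldb^\epsilon$ no longer vanishes, and the stretching term $(J_\epsilon\boldb^\epsilon\cdot\nabla)J_\epsilon\boldu^\epsilon$ no longer cancels against the Stokes energy identity $\nu\int|\Lambda^\alpha\boldu^\epsilon|^2\,dx=\int(\boldb^\epsilon\cdot\nabla)\boldb^\epsilon\cdot\boldu^\epsilon\,dx$ when $\boldu^\epsilon=\mathbf{U}_\alpha*(\boldb^\epsilon\otimes\boldb^\epsilon)$ is left unmollified, so ``rewriting the stretching term through the first equation'' does not recover \eqref{eq:energy-equality}; moreover, since $\Lambda^{2\beta}$ is not regularized, the problem is not an ODE on an $\Leb{2}$-based space. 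These points are fixable (the paper truncates in frequency, applies $\Proj\mathcal{S}_R$, keeps the nonlinearity in the antisymmetric form $\Div(\boldb\otimes\boldu-\boldu\otimes\boldb)$, and justifies the $\boldu$-part of the energy identity through the very weak solution theory, Theorem \ref{theorem:Frational Stokes-Lpdata}), but as stated your uniform bounds are not established. Your bookkeeping of the hypotheses is also off: the time-derivative bound for Aubin--Lions uses only $\alpha+\beta>d/2$ through Lemma \ref{lem:product-estimate}; the condition $2\alpha+\beta-1>d/2$ plays no role there.

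The more serious gap is in the convergence of $\boldu^{\epsilon_k}$ and of the fluxes. Lemma \ref{lem:Young} applied to $\boldb^{\epsilon_k}\otimes\boldb^{\epsilon_k}-\boldb\otimes\boldb$ cannot give convergence of $\boldu^{\epsilon_k}$ ``in the same topology'' as $\boldb^{\epsilon_k}$: it only controls the weak Lorentz norm $\Lor{d/(d+1-2\alpha)}{\infty}$, which is weaker than $\Leb{2}$ on compact sets whenever $\alpha\le (d+2)/4$, and, more importantly, the difference of the quadratic terms converges in $\Leb{1}$ only locally in space while the convolution with $\mathbf{U}_\alpha$ integrates over all of $\mathbb{R}^d$; one must split the convolution into near-field and far-field parts and show the tail is small uniformly in $k$, using $|\mathbf{U}_\alpha(x)|\apprle |x|^{-(d+1-2\alpha)}$ together with a uniform $\Leb{q}(0,\infty;\Leb{p})$ bound on $\boldb^{\epsilon_k}$, as the paper does. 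Consequently you only obtain local convergence of $\boldu^{\epsilon_k}$ in $\Lor{d/(d+1-2\alpha)}{\infty}$, and passing to the limit in $\iint(\boldu\otimes\boldb-\boldb\otimes\boldu):\nabla\Phi$ then requires pairing this weak-Lorentz convergence with a uniform local bound of $\boldb^{\epsilon_k}$ in $\Lor{d/(2\alpha-1)}{1}$; the embedding of $\tSob{\beta}$ into $\Lor{d/(2\alpha-1)}{1}$ locally is available precisely because $2\alpha+\beta-1>d/2$, and this step---the one the paper identifies as the crucial use of that hypothesis---is absent from your sketch, where the limit passage in the nonlinear terms is asserted to be routine.
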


Next, we   prove the time-continuity, energy identity, and uniqueness of  weak solutions of \eqref{eq:MHD-relaxed} for larger values of  $\alpha$ and $\beta$.

\begin{thm}\label{thm:uniqueness}
Let $\alpha$ and $\beta$ satisfy
\[	\frac{1}{2}<\alpha<\frac{d+1}{2},\quad \beta \geq 1,\quad \text{and}\quad \min\{\alpha+\beta,2\alpha+\beta-1\}\geq \frac{d}{2}+1.	\]
Then for any $\bm{b}_0\in \Leb{2}$ with ${\rm div}\, \bm{b}_0 =0$, there exists a unique  global  weak solution $(\boldu,\boldb )$ of  \eqref{eq:MHD-relaxed}. Moreover, the   solution $(\boldu,\boldb )$ is  continuous in $\Lor{{d}/{(d+1-2\alpha)}}{\infty}   \times \Leb{2}$ and satisfies the  energy identity \eqref{eq:energy-equality} for any $t\ge 0$.
\end{thm}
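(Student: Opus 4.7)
My plan is to note that existence of a global weak solution already follows from Theorem \ref{thm:GWP} (its hypotheses are strictly weaker than the ones here), so the task reduces to establishing, for any such weak solution, the energy identity \eqref{eq:energy-equality}, strong time-continuity in $\Lor{d/(d+1-2\alpha)}{\infty}\times \Leb{2}$, and uniqueness.

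For the energy identity and the continuity of $\boldb$, I would proceed by duality. From the integral identity \eqref{eq:magnetic} one extracts $\partial_t \boldb \in \Leb{2}_\loc(0,T;(\thSob{\beta})^*)$; the hypotheses $\beta\ge 1$ and $\alpha+\beta\ge d/2+1$ are exactly what is needed to place the nonlinear term $\Div(\boldu\otimes \boldb-\boldb\otimes \boldu)$ in this negative-order space via Hölder and Sobolev embedding. A Lions-Magenes type lemma then yields $\boldb\in C([0,\infty);\Leb{2})$ and
\[ \frac{1}{2}\frac{d}{dt}\norm{\boldb(t)}{\Leb{2}}^2 = \action{\partial_t \boldb(t),\boldb(t)}. \]
Pairing \eqref{eq:magnetic} with $\boldb$ and \eqref{eq:velocity} with $\boldu(t)$ at a.e.\ $t$, summing, and integrating in time produces \eqref{eq:energy-equality}; strong continuity of $\boldu$ in $\Lor{d/(d+1-2\alpha)}{\infty}$ then follows from the solution formula \eqref{sol-formula-int}, Lemma \ref{lem:Young}, and the continuity of $\boldb\otimes \boldb$ in $\Leb{1}$.

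For uniqueness, let $(\boldu_i,\boldb_i)$, $i=1,2$, be two weak solutions with the same datum and set $\boldw=\boldu_1-\boldu_2$, $\boldz=\boldb_1-\boldb_2$. By \eqref{sol-formula-int} applied to the stationary difference equation,
\[ \boldw = \mathbf{U}_\alpha \ast (\boldb_1\otimes \boldz + \boldz\otimes \boldb_2), \]
while $\boldz$ satisfies
\[ \partial_t \boldz + \eta \Lambda^{2\beta}\boldz + (\boldu_1\cdot\nabla)\boldz = -(\boldw\cdot\nabla)\boldb_2 + (\boldb_1\cdot\nabla)\boldw + (\boldz\cdot\nabla)\boldu_2. \]
Repeating the duality argument above and using $\Div \boldu_1=0$ to eliminate $\int (\boldu_1\cdot\nabla)\boldz\cdot \boldz\myd{x}$ yields
\[ \frac{1}{2}\frac{d}{dt}\norm{\boldz}{\Leb{2}}^2 + \eta \norm{\Lambda^\beta \boldz}{\Leb{2}}^2 = I_1+I_2+I_3, \]
where $I_1,I_2,I_3$ are the three nonlinear couplings on the right-hand side above. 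If each can be bounded by $\tfrac{\eta}{3}\norm{\Lambda^\beta\boldz}{\Leb{2}}^2 + g(t)\norm{\boldz}{\Leb{2}}^2$ with $g\in \Leb{1}(0,T)$ depending only on the $\thSob{\alpha}$ and $\thSob{\beta}$ norms of $\boldu_i$ and $\boldb_i$, then Gronwall forces $\boldz\equiv 0$, and $\boldw\equiv 0$ follows from the solution formula.

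The main obstacle lies in closing the nonlinear estimates in the \emph{borderline} regime $\min\{\alpha+\beta,2\alpha+\beta-1\}=d/2+1$. The simpler term $I_3=\int (\boldz\cdot\nabla)\boldu_2\cdot\boldz\myd{x}$ is controlled by Hölder combined with a Gagliardo-Nirenberg interpolation of $\boldz$ between $\Leb{2}$ and $\thSob{\beta}$; matching the resulting power of $\norm{\Lambda^\beta\boldz}{\Leb{2}}$ against the available $\Leb{2}_t\thSob{\alpha}$ regularity of $\boldu_2$ forces exactly the condition $\alpha+\beta\ge d/2+1$. The coupled term $I_2=\int (\boldb_1\cdot\nabla)\boldw\cdot\boldz\myd{x}$ (and symmetrically $I_1$) is subtler because $\boldw$ is not independent of $\boldz$ but obtained from it by a Riesz-type operator of order $2\alpha-1$: applying Young's inequality in Lorentz spaces (Lemma \ref{lem:Young}) to the convolution with $\mathbf{U}_\alpha$ places $\boldw$ in a mixed space whose exponent is determined by the $\Leb{\infty}_t\Leb{2}\cap \Leb{2}_t\thSob{\beta}$ membership of $\boldb_i$ and $\boldz$, and the pairing with $\boldb_1\nabla$ then closes precisely at $2\alpha+\beta-1\ge d/2+1$. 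Identifying the admissible interpolation exponents and verifying these two endpoint inequalities is the delicate quantitative point of the argument.
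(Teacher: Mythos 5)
Your high-level plan is the same as the paper's: existence from Theorem \ref{thm:GWP}, energy identity and continuity via a linear theory for the perturbed fractional heat equation, and uniqueness by a Gronwall argument in which the velocity difference $\boldw$ is recovered from $\boldz$ through the Stokes solution formula. However, the proposal stops exactly where the substance of the proof lies, and what is left is not routine. For Gronwall to close you need the coefficient $g(t)$ to be locally integrable, and in the paper this hinges on the refined product estimate of Lemma \ref{lem:product-estimate}: one must be able to choose $\theta_1,\theta_2$ with $(1+\mu)\theta_1+\theta_2+1/\beta\le 2$ (with $\mu=(1-\alpha)/\beta$ when $\alpha<1$), which is precisely where the hypothesis $\min\{\alpha+\beta,\,2\alpha+\beta-1\}\ge d/2+1$ enters. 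Moreover, the terms containing $\boldw$ are not closed by a direct H\"older/Young computation: the paper first bounds $\norm{\Lambda^\alpha\boldw}{\NLeb{2}}$ by $\norm{\boldz\otimes\boldb_1+\boldb_2\otimes\boldz}{\thSob{1-\alpha}}$ via Theorem \ref{theorem:Frational Stokes-Lpdata}, and then needs two genuinely different mechanisms to estimate that norm --- a duality/Sobolev interpolation with exponent $\lambda=(d+2-2\alpha)/(4\beta)\le 1/2$ when $\alpha\ge 1$, and the Kato--Ponce-type estimate of Lemma \ref{lem:commutator} (using $\beta>d/2$) when $\alpha<1$ --- followed by the bookkeeping that the resulting powers of $\phi(t)$ do not exceed $1$. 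Your sketch asserts that the endpoint inequalities ``close precisely'' at the stated conditions but explicitly defers their verification; that verification is the theorem.

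Two further points you treat as routine also need repair. The claim $\partial_t\boldb\in \Leb{2,\mathrm{loc}}(0,T;(\tSob{\beta})^*)$ does not follow from plain H\"older and Sobolev embedding at the borderline cases (e.g. $d=2$, $\alpha=\beta=1$, where $\boldu$ lies only in $\Lor{2}{\infty}$ in space and $\Leb{\infty}_t\Leb{2}$ is unavailable for $\boldu$); one needs the Lorentz-refined interpolation (Proposition \ref{prop:upper-critical}), and the paper avoids a Lions--Magenes lemma altogether by proving that $\boldu\otimes\boldb\in\Leb{2\beta/(2\beta-1)}(0,T;\Leb{2})$ (Lemma \ref{lem:div-free}) and building the chain rule into its own linear theory (Lemma \ref{lem:linear-heat}, Theorem \ref{thm:energy-equality}); the same step is needed again to justify the differential identity for $\norm{\boldz}{\NLeb{2}}^2$. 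Finally, to write $\boldw=\mathbf{U}_\alpha*(\boldb_1\otimes\boldz+\boldz\otimes\boldb_2)$ for two \emph{arbitrary} weak solutions you must first know that each satisfies $\boldu_i=\mathbf{U}_\alpha*(\boldb_i\otimes\boldb_i)$, which rests on the uniqueness of very weak solutions in $\Lor{d/(d+1-2\alpha)}{\infty}$ (Lemma \ref{lem:Liouville-type-theorem}, Theorem \ref{theorem:Frational Stokes-Lpdata-intersection}); this identification is used but not justified in your sketch.
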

 
\begin{rem*}

(a) Assume that $\alpha =1$ and $\beta >d/2 -1$. Then it follows from Theorem \ref{thm:GWP} that   for any $\bm{b}_0\in \Leb{2}$ with ${\rm div}\, \bm{b}_0 =0$, there exists a  global  weak solution $(\boldu,\boldb )$ of   \eqref{eq:MHD-relaxed}.
This result   extends the existence   result  of  McCormick-Robinson-Rodrigo \cite{MRR14} who considered the case when $d =2, 3$ and $\beta =1$.
Moreover, by Theorem \ref{thm:uniqueness}, weak solutions of   \eqref{eq:MHD-relaxed} are unique if $\beta \ge d/2 $. Hence it follows from the   existence   theorem  for strong solutions due to  Ji-Tan \cite{JT21} that  if $d=3$ and $\bm{b}_0\in H^1$, then $(\boldu,\boldb)$   satisfies the following strong regularity properties:
$$
\boldu \in C ( [0,T]; H^{3/2}  )\cap L_2  ( 0,T;H^2  ),
\quad \boldb \in C ([0,T]; H^{1}  )\cap L_2  ( 0,T;H^{1+\beta}  )
$$
for any $T<\infty$.

(b) Assume that $\beta =1$ and $\max \{ 1/2, d/2 -1 , d/4 \} < \alpha <(d+1)/2$. Then it follows from Theorem \ref{thm:GWP} that   for any $\bm{b}_0\in \Leb{2}$ with ${\rm div}\, \bm{b}_0 =0$, the problem \eqref{eq:MHD-relaxed} has  global  weak solutions. Moreover, by Theorem  \ref{thm:uniqueness}, weak solutions of  \eqref{eq:MHD-relaxed} are     unique if $ \alpha  \ge d/2 $ in addition.

(c) Assume that $2 \le d \le 4$ and  $(d+2)/6 < \alpha  =\beta <(d+1)/2$. Then it follows from Theorems \ref{thm:GWP} and \ref{thm:uniqueness} that   for any $\bm{b}_0\in \Leb{2}$ with ${\rm div}\, \bm{b}_0 =0$, the problem \eqref{eq:MHD-relaxed} has  global  weak solutions, which    are     unique if $(d+2)/4 \le  \alpha  =\beta <(d+1)/2$. 
\end{rem*}

One of our main tools to prove Theorems \ref{thm:GWP} and \ref{thm:uniqueness} is the following product estimate (see Lemma  \ref{lem:product-estimate}): if $1/2<\alpha<(d+1)/2$,  $\beta>0$, and  $\alpha+\beta>d/2$, then there exist constants   $0<\theta_1,\theta_2<1$ such that
\begin{equation}\label{eq:prod-est}
	\norm{fg}{\NLeb{2}}\apprle \norm{f}{\NLor{d/(d+1-2\alpha)}{\infty}}^{1-\theta_1}\norm{f}{\thSob{\alpha}}^{\theta_1}\norm{g}{\NLeb{2}}^{1-\theta_2}\norm{g}{\thSob{\beta}}^{\theta_2}
\end{equation}
for all $f\in\Lor{d/(d+1-2\alpha)}{\infty}\cap \thSob{\alpha}$ and $g\in \tSob{\beta}$. Moreover, if $\alpha+(1-\mu)\beta \geq d/2+1$ for some $0\leq \mu \le 1$, we can choose $\theta_1$ and $\theta_2$ so that $(1+\mu)\theta_1+\theta_2+1/\beta \leq 2$. This additional condition enables us to establish  uniqueness of weak solutions of \eqref{eq:MHD-relaxed}.  The estimate \eqref{eq:prod-est} will be derived from generalized Gagliardo-Nirenberg interpolation inequalities that were obtained recently by Byeon-Kim-Oh \cite{BKO21} and Wang-Wei-Ye \cite{WWY21}.

To prove   existence of weak solutions (Theorem \ref{thm:GWP}) of \eqref{eq:MHD-relaxed}, we first construct a family of solutions $\{(\boldu_R,\boldb_R)\}$ of the $R$-truncated problems \eqref{eq:MHD-relaxed-approx}.  Moreover, it is shown that these solutions   satisfy uniform bounds. Then by a standard Aubin-Lions compactness argument combined with the  estimate  \eqref{eq:prod-est}, we  construct a sequence of functions $\{\boldb_{R_k}\}$ that has   strong convergence in $\Leb{2}(0,T;\Leb{2}(K))$ for every finite $T>0$ and compact   $K \subset \mathbb{R}^d$. To show strong convergence of $\boldu_{R_k}$, we utilize  the solution formula (\ref{sol-formula-int}) for the fractional Stokes system that will be derived  in Section 3 (Theorem \ref{theorem:Frational Stokes-Lpdata}). 
Very weak solutions of the fractional Stokes system are   introduced to apply  the solution formula (\ref{sol-formula-int}) for $L^1$-data.
Finally, making crucial use of the assumption $2 \alpha +\beta >d/2+1$, we   show that the nonlinear terms $\boldu_{R_k} \otimes \boldb_{R_k}$ converge  strongly in $L_{1,\mathrm{loc}} (\mathbb{R}^d \times (0,\infty ))$. The  strong  convergence properties of $ \boldu_{R_k}$ and $\boldb_{R_k}$ enable us to complete the proof of    existence of weak solutions of \eqref{eq:MHD-relaxed} by passing to the limit $k\to\infty$.    To prove  uniqueness of weak solutions (Theorem \ref{thm:uniqueness}), we first study weak solutions of  perturbed fractional heat equations. Then applying  a standard Gronwall-type argument combined with product estimates (Lemmas \ref{lem:product-estimate} and \ref{lem:commutator}), we show that weak solutions are unique.

This paper proceeds in six sections and three appendix sections. In Section \ref{sec:prelim}, we introduce our notations, the fractional Laplacian, homogeneous Sobolev spaces, and Lorentz spaces. We also provide proofs of product estimates that play crucial roles in proving existence and uniqueness of weak solutions.  In Section \ref{sec:Stokes}, we prove the  solution formula \eqref{sol-formula-int} for the fractional Stokes system. The notion of very weak solutions is also introduced. In Section \ref{sec:perturbed-heat}, we prove existence and regularity properties of weak solutions of  perturbed fractional heat equations that will be used in the proof of uniqueness of weak solutions of \eqref{eq:MHD-relaxed}.  The main theorems are proved in Sections \ref{sec:proofs} and \ref{sec:uniqueness}. Finally we  give somewhat detailed proofs of some technical results  in Appendices.

\section{Preliminaries}\label{sec:prelim}

\subsection{Notations}
Let $\mathbb{N}_0 =\{0,1,2,\ldots\}$ be the set of nonnegative integers.   For $\gamma =(\gamma_1,\dots,\gamma_d)\in \mathbb{N}^d_0$ and  a  function $u: \mathbb{R}^d\rightarrow \mathbb{R}$, we write
\[   D^\gamma u = D_1^{\gamma_1}\cdots D_d^{\gamma_d}u \quad\mbox{and}\quad |\gamma|=\gamma_1+\cdots+\gamma_d. \]
We write $\nabla u = (D_1 u,\dots,D_d u)$ for the gradient of $u$.

We use bolditalic to denote vector fields, e.g., $\boldu:\mathbb{R}^d\times (0,\infty)\rightarrow \mathbb{R}^d$. Similarly we use boldroman to denote 2-tensors, e.g., $\boldF :\mathbb{R}^d\times (0,\infty)\rightarrow \mathbb{R}^{d\times d}$. For two vector fields $\boldu=(u^1,\dots,u^d)$ and  $\boldv=(v^1,\dots,v^d)$, let $\boldu\otimes \boldv$ be the $2$-tensor whose $(i,j)$ component is   $u^i v^j$ for $1\leq i,j\leq d$. Also, $(\boldu \cdot \nabla) \boldv$ is the vector field defined by
\[    [(\boldu \cdot \nabla) \boldv]^i = \sum_{j=1}^d u^j \frac{\partial v^i}{\partial x_j} \quad (1\leq i\leq d).
\]
For two 2-tensors $\boldF=[F^{ij}]_{i,j=1}^d$ and $\boldG=[G^{ij}]_{i,j=1}^d$,  their inner product is defined  by
\[   \boldF:\boldG = \sum_{i,j=1}^d F^{ij} G^{ij} \]

For $\Omega\subset \mathbb{R}^d$ and  $1\leq p\leq \infty$, $\Leb{p}(\Omega)$ denotes the $\Leb{p}$-space over $\Omega$   with the norm $\norm{\cdot}{\NLeb{p};\Omega}$. For $1 \le p<\infty$ and $1\leq q\leq \infty$, we denote by $\Lor{p}{q}(\Omega)$ the Lorentz space  over $\Omega$ with the quasi-norm $\norm{\cdot}{\NLor{p}{q};\Omega}$.  Let $C^\infty_c(\Omega)$ denote  the space of all smooth functions (or vector fields) that have compact supports in $\Omega$. We denote  by $C_{c,\sigma}^\infty (\Omega)$ the set of all vector fields in $C_c^\infty(\Omega)$ that are  divergence-free.  For the sake of convenience, we drop $\Omega$ in   function spaces and their (quasi-)norms if $\Omega=\mathbb{R}^d$, e.g., $L_p =L_p (\mathbb{R}^d )$ and $\|\cdot \|_p = \|\cdot\|_{p; \mathbb{R}^d}$. For $x\in \mathbb{R}^d$ and $r>0$, let $B_r (x)$ denote the open ball   of radius $r$ centered at $x$. We write $B_r$ for open balls centered at the origin.

For two nonnegative quantities $A$ and $B$, we write $A\apprle_{\alpha,\beta,\ldots} B$ if $A\leq CB$ for some positive constant $C$ depending on the parameters $\alpha$, $\beta$, \ldots. If the dependence is evident, we usually omit the subscripts and simply write $A\apprle B$.  We write $A\approx B$ if $A\apprle B$ and $B\apprle A$.
For two topological vector spaces $X$ and $Y$ with $X \subset Y$, we write $X\hookrightarrow Y$ if  $X$ is continuously embedded into $Y$, that is, the embedding $x \mapsto x$ is continuous.  For a topological vector   space $X$, we denote by $X^*$ the (topological) dual space of $X$. The dual pairing between $X^*$ and $X$ is denoted by $\action{\cdot,\cdot}$.

Let $\mathscr{S}$ and $\mathscr{S}'$ be the Schwartz class and the space of tempered distributions on $\mathbb{R}^d$, respectively.
For an integrable  function  $f$ on $\mathbb{R}^d$, we denote by $\hat{f}$ and $f^{\vee}$  the  Fourier transform and the inverse Fourier transform, respectively, of $f$:
\[	\hat{f}(\xi) = \int_{\mathbb{R}^d} e^{-2\pi i x\cdot \xi} f(x) \myd{x}\quad \text{and}\quad f^{\vee}(x)=\int_{\mathbb{R}^d} e^{2\pi i x\cdot \xi} f (\xi) \myd{\xi}.\]
If $f$ is a tempered distribution, then $\hat{f}$ and $f^{\vee}$ are defined via duality by  
\[
\langle{\hat{f},\phi}\rangle = \langle{f, \hat{\phi}}\rangle \quad\mbox{and}\quad \action{f^{\vee},\phi} =  \action{f,\phi^\vee} \quad\mbox{for all}\,\,\phi \in \mathscr{S}.
\]

\subsection{Sobolev spaces}
 
For $s\in \mathbb{R}$ and $1<p<\infty$, the \emph{inhomogeneous Sobolev space} $\tSob{s}_p$ is defined by
\[	\tSob{s}_p = \{ f \in \mathscr{S}' : J^s f \in \Leb{p} \},\]
where  $J^s = (I-\Delta)^{s/2} : \mathscr{S}'\rightarrow \mathscr{S}'$ is defined via the Fourier transform by
\[	\widehat{J^s f} = \left( 1+4\pi^2 |\cdot|^2 \right)^{s/2} \hat{f}\quad \text{for } f\in \mathscr{S}'.\]
For  $p=2$, we write $\tSob{s}  =\tSob{s}_2$. Then the Sobolev space $\tSob{s}$ is indeed a Hilbert space equipped with the inner product
\[  (u,v)_{\tSob{s}} = \left( J^s u, J^s v \right) \]
with $(\cdot,\cdot)$ denoting the inner product on $\Leb{2}$. By the Plancherel theorem, we   have
\[     (u,v)_{\tSob{s}} = \int_{\mathbb{R}^d} \left( 1+4\pi^2 |\xi|^2 \right)^s \hat{u}(\xi)\overline{\hat{v}(\xi)} \,\myd{\xi}.\]

To define   homogeneous Sobolev spaces, we need to   introduce the following subspace of $\mathscr{S}$:
\[	\mathscr{S}_0 =\{ \phi \in \mathscr{S} : D^\gamma \hat{\phi}(0)=0\quad \text{for all } \gamma \in \mathbb{N}^d_0 \}.\]
Let $\mathscr{S}_0'$ be the dual space of $\mathscr{S}_0$. Then since  $\mathscr{S}_0 \hookrightarrow \mathscr{S}  \hookrightarrow \Leb{1} \cap \Leb{\infty}$, we have
\[
\Leb{1}+\Leb{\infty} \hookrightarrow\mathscr{S} ' \hookrightarrow \mathscr{S}_0 ' .
\]
Moreover, it follows from the Hahn-Banach theorem that every $f \in \mathscr{S}_0 '$ can be extended to a tempered distribution $F$. Such an $F$ is unique up to additive polynomials, thanks to the  following simple  lemma (see e.g. \cite[Proposition 1.1.3]{G14-1}).
\begin{lem}\label{lem:polynomial-isomorphism}  Let $f $ be a tempered distribution. Then $f$ is a polynomial  if and only if $\action{f,\psi}=0$ for all $\psi \in \mathscr{S}_0$.
\end{lem}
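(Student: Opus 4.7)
The plan is to reduce both directions to elementary statements about tempered distributions by passing to the Fourier side, where $\mathscr S_0$ has a much cleaner description.

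The first observation I would record is that, in the convention $\widehat \phi(\xi)=\int e^{-2\pi i x\cdot\xi}\phi(x)\,dx$, differentiation under the integral sign gives $D^\gamma\widehat\phi(0)=(-2\pi i)^{|\gamma|}\int x^\gamma\phi(x)\,dx$. Hence $\phi\in\mathscr S_0$ is equivalent to $\int x^\gamma\phi(x)\,dx=0$ for every $\gamma\in\mathbb N_0^d$, and also (via Fourier inversion $\widehat{\widehat\phi}(x)=\phi(-x)$) to $\widehat\phi$ being a Schwartz function that vanishes to infinite order at the origin.

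For the ($\Rightarrow$) direction I would just write $f=\sum_{|\gamma|\le N}a_\gamma x^\gamma$ and compute
\[
\langle f,\psi\rangle=\sum_{|\gamma|\le N}a_\gamma\int_{\mathbb R^d}x^\gamma\psi(x)\,dx=0
\]
for any $\psi\in\mathscr S_0$, using the reformulation above. For the ($\Leftarrow$) direction, assume $\langle f,\psi\rangle=0$ for all $\psi\in\mathscr S_0$. Given any $\phi\in\mathscr S$ with $D^\gamma\phi(0)=0$ for all $\gamma$, the function $\widehat\phi^{\,\vee\vee}=\phi(-\cdot)$ has the same property, so $\widehat\phi\in\mathscr S_0$ and the duality relation $\langle\widehat f,\phi\rangle=\langle f,\widehat\phi\rangle$ yields $\langle\widehat f,\phi\rangle=0$. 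Applying this to arbitrary $\phi\in C_c^\infty(\mathbb R^d\setminus\{0\})$ forces $\operatorname{supp}\widehat f\subset\{0\}$.

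At this point I invoke the standard structure theorem for distributions supported at a single point: $\widehat f=\sum_{|\gamma|\le N}c_\gamma D^\gamma\delta_0$ for some $N$ and constants $c_\gamma$. Taking inverse Fourier transform, $(D^\gamma\delta_0)^\vee=(-2\pi i)^{|\gamma|}x^\gamma$, so $f$ is a polynomial, completing the proof. The only mildly nontrivial point is the appeal to the structure theorem for point-supported distributions; everything else is a direct translation via Fourier inversion. I do not foresee a genuine obstacle — the content of the lemma is essentially that the annihilator of $\mathscr S_0$ in $\mathscr S'$ is exactly the space of polynomials, which is the Fourier-side version of the fact that distributions supported at $\{0\}$ are precisely finite sums of derivatives of $\delta_0$.
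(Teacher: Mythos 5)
Your proof is correct, and it coincides with the standard argument found in the reference the paper cites for this lemma (Grafakos, \emph{Modern Fourier Analysis}, Proposition~1.1.3); the paper itself simply quotes the fact without proof. The one cosmetic slip is the superscript in ``$\widehat\phi^{\,\vee\vee}=\phi(-\cdot)$'': what you need (and what your computation actually uses) is $\widehat{\widehat\phi}(x)=\phi(-x)$, so that $D^\gamma\widehat{\widehat\phi}(0)=(-1)^{|\gamma|}D^\gamma\phi(0)=0$ and hence $\widehat\phi\in\mathscr S_0$. With that fixed, the chain ``$\langle f,\psi\rangle=0$ for all $\psi\in\mathscr S_0$ $\Rightarrow$ $\operatorname{supp}\widehat f\subset\{0\}$ $\Rightarrow$ $\widehat f$ is a finite linear combination of $D^\gamma\delta_0$ $\Rightarrow$ $f$ is a polynomial'' is exactly the expected route.
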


The fractional Laplacian $\Lambda^s : \mathscr{S}_0\rightarrow \mathscr{S}_0$ of order $s\in \mathbb{R}$ is defined by
\[	\Lambda^s \phi = \left((2\pi |\cdot|)^s \hat{\phi} \right)^{\vee}\quad \text{for all } \phi \in \mathscr{S}_0.\]
Then since $\Lambda^{s+t}=\Lambda^s \Lambda^t$ for any $s,t \in \mathbb{R}$, it follows that  $\Lambda^s : \mathscr{S}_0\rightarrow \mathscr{S}_0$ is an isomorphism with the inverse $\Lambda^{-s}$. Hence for each $u\in \mathscr{S}_0'$, we   define $\Lambda^s u\in \mathscr{S}_0'$ via duality by
\[	\action{\Lambda^s u,\phi}=\action{u,\Lambda^s \phi}\quad \text{for all } \phi \in \mathscr{S}_0.   \]
Note also   that $\Lambda^s : \mathscr{S}_0'\rightarrow \mathscr{S}_0'$ is an isomorphism.

For $s\in \mathbb{R}$ and $1<p<\infty$, the \emph{homogeneous Sobolev space} $\thSob{s}_p$ is defined by
\[	\thSob{s}_p = \{ f\in \mathscr{S}_0' : \Lambda^s f\in \Leb{p} \}.\]
For $p=2$, we write $\thSob{s}=\thSob{s}_p$. The space $\thSob{s}$ is a Hilbert space equipped with the inner product
\[	(u,v)_{\thSob{s}} = \left(\Lambda^s u,\Lambda^s v \right).\]
It should be remarked  that $\Lambda^t : \thSob{s+t} \rightarrow \thSob{s}$ is an isometric  isomorphism for any $s,t \in \mathbb{R}$. It is standard that $\tSob{s}=\Leb{2}\cap \thSob{s}$ for any $s\geq 0$ (see e.g. \cite[Theorem 6.3.2]{BL76}).

The following density results will be used several times in the paper.

\begin{lem}\label{lem:density-approx-S0}\leavevmode
\begin{enumerate}[label=\textnormal{(\arabic*)}]
\item $C_c^\infty$ is dense in $\mathscr{S}$.
\item For each $\phi\in \mathscr{S}$, there exists  a sequence $\{\phi_k\}$ in $\mathscr{S}_0$ such that $\phi_k  \rightarrow \phi$ in $\tSob{s}$ for any $s \ge 0$
\item$ \mathscr{S}_0$ is dense in both $\tSob{s}$ and  $\thSob{s}$ for   $s \in \mathbb{R}$.
\item   $\mathscr{S}_0$ is dense in $\Leb{p}\cap \thSob{s}$ for  $1<p<\infty$ and $s\in \mathbb{R}$.
\end{enumerate}
\end{lem}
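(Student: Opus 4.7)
The four parts can be proved in order, each building on the previous.

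\emph{Parts (1) and (2).} Both rely on smooth cutoffs. For (1), fix $\chi \in C_c^\infty(\mathbb{R}^d)$ with $\chi \equiv 1$ on $B_1$ and $\supp\chi \subset B_2$, and set $\phi_k(x) = \chi(x/k)\phi(x)$. A Leibniz expansion together with the rapid decay of $\phi \in \mathscr{S}$ on $\{|x| \ge k\}$ shows that every Schwartz seminorm of $\phi - \phi_k$ vanishes as $k \to \infty$. For (2), pick $\eta \in C_c^\infty(\mathbb{R}^d)$ with $\eta \equiv 1$ on $B_1$ and $\supp\eta \subset B_2$, and define $\phi_k \in \mathscr{S}$ by $\widehat{\phi_k}(\xi) = (1 - \eta(k\xi))\hat\phi(\xi)$. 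Since $\widehat{\phi_k}$ vanishes on $\{|\xi| \le 1/k\}$, every derivative $D^\gamma \widehat{\phi_k}(0)$ is zero, hence $\phi_k \in \mathscr{S}_0$. By Plancherel,
\[
\|\phi - \phi_k\|_{\tSob{s}}^2 = \int_{\mathbb{R}^d}(1+4\pi^2|\xi|^2)^s|\eta(k\xi)|^2|\hat\phi(\xi)|^2 \myd{\xi},
\]
and the integrand is pointwise dominated by $\|\eta\|_\infty^2(1+4\pi^2|\xi|^2)^s|\hat\phi(\xi)|^2 \in L^1$ for every $s \ge 0$ and vanishes on $\xi \ne 0$; dominated convergence yields the claimed $\tSob{s}$-convergence.

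\emph{Part (3).} For $\tSob{s}$ with $s \ge 0$, combine (1), (2), and the classical density of $\mathscr{S}$ in $\tSob{s}$. For $\tSob{s}$ with $s < 0$, I would use that the Bessel potential $J^s : \tSob{s} \to \Leb{2}$ is an isometric isomorphism whose restriction is a bijection on $\mathscr{S}_0$, since the Fourier multiplier $(1+4\pi^2|\xi|^2)^{s/2}$ is smooth on all of $\mathbb{R}^d$: given $u \in \tSob{s}$, approximate $J^s u \in \Leb{2}$ by $v_k \in \mathscr{S}_0$ (via (2) with $s = 0$) and set $u_k = J^{-s}v_k \in \mathscr{S}_0$. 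For $\thSob{s}$ with arbitrary $s \in \mathbb{R}$, repeat the argument with $\Lambda^s : \thSob{s} \to \Leb{2}$; here $\Lambda^s$ preserves $\mathscr{S}_0$ because $(2\pi|\xi|)^s$ is smooth on $\mathbb{R}^d \setminus \{0\}$ while Fourier transforms of functions in $\mathscr{S}_0$ vanish to infinite order at the origin.

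\emph{Part (4).} Given $u \in \Leb{p}\cap \thSob{s}$, pick $\rho \in \mathscr{S}$ with $\int \rho = 1$ and $\hat\rho \in C_c^\infty$, and set $u_\epsilon = \rho_\epsilon * u$. Because $\Lambda^s$ commutes with convolution, $\Lambda^s u_\epsilon = \rho_\epsilon * \Lambda^s u \to \Lambda^s u$ in $\Leb{2}$, while standard mollifier estimates yield $u_\epsilon \to u$ in $\Leb{p}$ for $1 < p < \infty$. Each $u_\epsilon$ is smooth and band-limited. I would then localize $u_\epsilon$ by multiplying with a spatial cutoff $\zeta(\cdot/R)$, apply the Fourier cutoff of (2) to land in $\mathscr{S}_0$, and extract a diagonal sequence $(R_k,\epsilon_k,N_k)\to(\infty,0,\infty)$. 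The main obstacle is the spatial-cutoff step: $\Leb{p}$-convergence is immediate by dominated convergence, but $\thSob{s}$-convergence requires controlling the commutator $[\Lambda^s,\zeta(\cdot/R)]u_\epsilon$, which I would estimate using a Bernstein-type inequality for the band-limited $u_\epsilon$ together with the $O(R^{-1})$ scaling of $\nabla\zeta(\cdot/R)$ and the given $\Leb{p}$-bound on $u$.
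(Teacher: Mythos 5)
Parts (1)--(3) of your proposal are correct and essentially the same as the paper's argument: a spatial cutoff for (1), a Fourier-side cutoff away from the origin plus dominated convergence for (2), and transport of an $\Leb{2}$-approximation through the isomorphisms $\Lambda^{\pm s}$ and $J^{\pm s}$ for (3).

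Part (4), however, has a genuine gap, and it sits exactly at the step you yourself flag as ``the main obstacle.'' Your plan is: mollify to get a band-limited $u_\epsilon$, multiply by $\zeta(\cdot/R)$, then apply a Fourier cutoff to land in $\mathscr{S}_0$, controlling the spatial-cutoff error in $\thSob{s}$ via the commutator $[\Lambda^s,\zeta(\cdot/R)]u_\epsilon$. The trouble is that mollification by $\rho_\epsilon$ removes only the high frequencies: $\widehat{u_\epsilon}$ does not vanish near $\xi=0$, and after the spatial cutoff the function $\zeta(\cdot/R)u_\epsilon$ is integrable with, in general, nonzero integral, so its Fourier transform is nonzero at the origin. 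For $s\le -d/2$ such a function does not belong to $\thSob{s}$ at all, so $\Lambda^s\bigl(\zeta(\cdot/R)u_\epsilon\bigr)$, and hence the commutator you propose to estimate, need not even lie in $\Leb{2}$; and for fractional $s$ in the remaining range the asserted bound (Bernstein plus the $O(R^{-1})$ scaling of $\nabla\zeta$ plus the $\Leb{p}$ bound) is not a routine commutator estimate -- the operator $[\Lambda^s,\zeta(\cdot/R)]$ is nonlocal, and Kato--Ponce-type bounds would ask for control of $u_\epsilon$ in spaces like $\thSob{s-1}$, which again fails at low frequencies. So the sketch does not close for general $s\in\mathbb{R}$. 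This is precisely where the paper's proof takes a different path: its first truncation is the Littlewood--Paley partial sum $u_N=\sum_{|j|\le N}\Delta_j u$, whose Fourier support is an annulus, so the low frequencies are removed from the start (the convergence $u_N\to u$ in $\Leb{p}$ and $\thSob{s}$ uses the Littlewood--Paley characterizations, which is where $1<p<\infty$ enters); then the low-frequency projection $I-S_{-M}$ is applied to the \emph{difference} $(\theta_R-1)u_N$, and its $\thSob{s}$ norm is estimated dyadically by the inhomogeneous norm $\norm{(\theta_R-1)u_N}{\tSob{k}}$ with $k=\max\{0,\lfloor s\rfloor+2\}$, which tends to zero by dominated convergence. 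To salvage your route you would have to interleave the low-frequency removal with the error estimate in the same way -- estimate the projected difference directly, never passing through $\norm{\zeta(\cdot/R)u_\epsilon-u_\epsilon}{\thSob{s}}$ -- and supply a dyadic (or equivalent) estimate in place of the commutator step.
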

\begin{proof}
To prove Parts (1) and  (2), we choose $\psi \in C^\infty$ such  that $0\leq \psi\leq 1$, $\psi=0$ in $B_1$, and $\psi=1$ outside $B_2$. Given $\phi\in \mathscr{S}$, we  define $\phi_k (x) = \left[1-\psi (x/k )\right]\phi (x)$ for $k\in \mathbb{N}$. Then $\phi_k \in C_c^\infty$ and $\phi_k  \rightarrow \phi$ in $\mathscr{S}$. Moreover, if    $\psi_k  = \left(\psi (k \cdot ) \hat{\phi} \right)^{\vee}$ for $k\in \mathbb{N}$,  then  $\psi_k \in \mathscr{S}_0$ and
$$
\norm{\psi_k  -\phi}{\tSob{s}}^2 = \int_{\mathbb{R}^d} \left( 1+4\pi^2 |\xi|^2 \right)^s \left| \psi  (k \xi )- 1 \right|^2 |\hat{\phi}(\xi)|^2 \,\myd{\xi} \rightarrow 0
$$
for any $s \ge 0$, by the dominated convergence theorem.

Suppose that $f \in \thSob{s}$ and $u= \Lambda^s f$. Then since $\mathscr{S}$ is dense in $L_2$, it follows from Part (2) that  there exists a sequence $\{u_k\}$ in $\mathscr{S}_0$ such that $u_k \to u$ in $L_2$. If $f_k = \Lambda^{-s} u_k$, then $f_k \in  \mathscr{S}_0$ and  $\norm{f_k-f}{\thSob{s}} = \norm{u_k-u}{2} \to 0$. This proves that $\mathscr{S}_0$ is dense in $\thSob{s}$. Similarly, using $J^s$ instead of $\Lambda^s$, one   easily shows that $\mathscr{S}_0$ is dense in $\tSob{s}$. This proves Part (3). A proof of (4) will be given later in Appendix \ref{sec:app-b}.
\end{proof}

An immediate consequence of Lemma \ref{lem:density-approx-S0} (3) is that every $f \in \thSob{-s}$ can be extended to a bounded linear functional on $\thSob{s}$ in such a way that the inequality
\begin{equation}\label{dual-Hspace}
\action{f,g } \le \|f\|_{\thSob{-s}}\|g\|_{\thSob{s}}
\end{equation}
holds for all $g \in \thSob{s}$. Moreover, since $L^2$ is self-dual and $\Lambda^{t} : \dot{H}^{t} \to L^2$ is an isomorphism for any $t \in \mathbb{R}$, it is easily shown that $\dot{H}^{-s}$ is  the dual space of $\dot{H}^s$.

Assume now that $s > 0$. Then it can be shown (see e.g. \cite[Lemma 1]{GO14}) that   $\Lambda^s$ is well-defined on $\mathscr{S}$ and maps $\mathscr{S}$ to $ \mathscr{S}_s$, where 
\[ \mathscr{S}_s =\left\{ f\in C^\infty : \left( 1+|\cdot|^{d+s} \right)D^\gamma \psi \in \Leb{\infty} \,\,\text{ for all } \gamma \in \mathbb{N}_0^d \right\} .
\]
In particular,  $\Lambda^s$ is bounded from $\mathscr{S}$ to $\Leb{1}\cap \Leb{\infty}$. Hence for each $u\in \Leb{1}+\Leb{\infty}$,  we can extend   $\Lambda^s u$    to  a tempered distribution by defining
\[	\action{\Lambda^s u ,\phi}=\int_{\mathbb{R}^d} u \Lambda^s \phi \myd{x}\quad \mbox{for all}\,\, \phi \in \mathscr{S}  .\]

\subsection{Lorentz spaces}

Let $\Omega$ be any domain in $\mathbb{R}^d$. For a measurable function $f:\Omega \rightarrow \mathbb{C}$, the \emph{decreasing rearrangement} $f^*$ of $f$ is defined by
\[ f^*(t) = \inf \{ s>0 : d_f(s)\leq t\}\quad (t>0), \]
where $d_f(s)=|\{ x\in \Omega : |f(x)|>s\}|$.   For $1\leq p<\infty$ and $1\leq q\leq \infty$, we define
\[  \norm{f}{\NLor{p}{q};\Omega} =\begin{dcases*}
 \left( \int_0^\infty [t^{1/p}f^*(t)]^q\frac{dt}{t} \right)^{1/q} & if $q<\infty$,\\
 \sup_{t>0} t^{1/p}f^*(t)& if $q=\infty$.
\end{dcases*}   \]
The set of all $f$ satisfying $\norm{f}{\NLor{p}{q};\Omega}<\infty$ is denoted by $\Lor{p}{q}(\Omega)$ and   called the \emph{Lorentz space} over $\Omega$ with indices $p$ and $q$. It is well-known that  $\Lor{p}{q}(\Omega)$ is a complete quasi-normed space and is normable if $1<p<\infty$  (see e.g. \cite[Theorem 3.22]{SW71}).  Moreover, for $1 \le p<\infty$, we have
\[
   \Lor{p}{1}(\Omega)\hookrightarrow \Lor{p}{p}(\Omega)=\Leb{p}(\Omega)\hookrightarrow \Lor{p}{\infty}(\Omega).
\]

Using Lorentz spaces, we can refine several well-known inequalities in analysis. The first one is  H\"older's inequality in Lorentz spaces which was proved by O'Neil \cite[Theorems 3.4, 3.5]{O63}.

\begin{lem}\label{lem:Holder}
  Let $1 \le  p,p_1,p_2 <\infty$ and $1 \le q,q_1,q_2\leq \infty$ satisfy
  \[  \frac{1}{p}=\frac{1}{p_1}+\frac{1}{p_2}\quad \text{and}\quad \frac{1}{q}\leq \frac{1}{q_1}+\frac{1}{q_2}.  \]
  Then  for all $f\in \Lor{p_1}{q_1}(\Omega)$ and $g\in \Lor{p_2}{q_2}(\Omega)$,  \[   \norm{fg}{\NLor{p}{q};\Omega}\apprle \norm{f}{\NLor{p_1}{q_1};\Omega}\norm{g}{\NLor{p_2}{q_2};\Omega}.   \]
\end{lem}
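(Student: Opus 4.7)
The plan is to follow the standard three-step strategy: (i) reduce to the equality case $1/q = 1/q_1+1/q_2$, (ii) establish a pointwise bound on the decreasing rearrangement of $fg$, and (iii) apply H\"older's inequality in the rearrangement variable.

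First, let $\tilde q \in [1,\infty]$ be defined by $1/\tilde q = 1/q_1 + 1/q_2$. The hypothesis on $q$ gives $\tilde q \le q$, so the elementary embedding $\Lor{p}{\tilde q}(\Omega)\hookrightarrow \Lor{p}{q}(\Omega)$, valid for $1\le p<\infty$ whenever $\tilde q \le q$, reduces the task to proving the inequality under the exact relation $1/\tilde q = 1/q_1 + 1/q_2$. (If $\tilde q = \infty$ as well, meaning $q_1 = q_2 = \infty$, the argument simplifies further.)

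Second, I would establish the pointwise inequality $(fg)^*(t) \le f^*(t/2)\,g^*(t/2)$ for all $t>0$. This follows at once from the set inclusion
\[ \{ x\in\Omega : |f(x)g(x)| > f^*(t/2)\,g^*(t/2)\} \subseteq \{|f| > f^*(t/2)\} \cup \{|g| > g^*(t/2)\}, \]
together with the defining property $d_h(h^*(s)) \le s$ of the decreasing rearrangement, which bounds the measure on the right by $t/2 + t/2 = t$ and hence forces $(fg)^*(t)$ to be at most $f^*(t/2) g^*(t/2)$.

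Third, assuming $q_1,q_2 <\infty$ (the $q_i = \infty$ cases being handled analogously with suprema in place of integrals), I substitute the rearrangement bound into the Lorentz quasi-norm and change variables $s=t/2$ to obtain
\[ \|fg\|_{\NLor{p}{\tilde q};\Omega} \le C \left( \int_0^\infty \bigl[s^{1/p_1} f^*(s)\bigr]^{\tilde q} \bigl[s^{1/p_2} g^*(s)\bigr]^{\tilde q}\,\frac{ds}{s} \right)^{1/\tilde q}, \]
where I used $1/p = 1/p_1 + 1/p_2$ to split the weight $s^{1/p}$. The final step is to invoke H\"older's inequality in $L^{\tilde q}((0,\infty),ds/s)$ with conjugate exponents $q_1/\tilde q$ and $q_2/\tilde q$, which produces exactly $C\|f\|_{\NLor{p_1}{q_1};\Omega}\|g\|_{\NLor{p_2}{q_2};\Omega}$. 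No step looks seriously obstructive: the only mildly subtle point is the reduction via the $\Lor{p}{\tilde q}\hookrightarrow \Lor{p}{q}$ embedding, and the only calculation of substance is the distribution-function argument for the pointwise rearrangement bound; everything else is H\"older on the half-line with the Haar measure $ds/s$.
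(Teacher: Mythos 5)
Your proof is correct. Note, however, that the paper does not prove this lemma at all: it is quoted directly from O'Neil \cite{O63} (Theorems 3.4 and 3.5), so there is no internal argument to compare against. Your route is the standard self-contained one and differs somewhat from O'Neil's original, which passes through the averaged rearrangement $h^{**}(t)=t^{-1}\int_0^t h^*$ and the inequality $(fg)^{**}(t)\le f^{**}(t)\,g^{**}(t)$; you instead use the sharper-looking pointwise bound $(fg)^*(t)\le f^*(t/2)\,g^*(t/2)$, which follows correctly from the set inclusion you state together with $d_h(h^*(s))\le s$, and then you conclude by H\"older on $(0,\infty)$ with the measure $ds/s$ using the conjugate pair $q_1/\tilde q$, $q_2/\tilde q$. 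The reduction to the equality case $1/\tilde q=1/q_1+1/q_2$ via the embedding $\Lor{p}{\tilde q}(\Omega)\hookrightarrow\Lor{p}{q}(\Omega)$ (valid since the hypothesis forces $\tilde q\le q$) is legitimate, and the change of variables $s=t/2$ only costs a harmless factor $2^{1/p}$, which is absorbed into the implicit constant of $\apprle$. The $q_i=\infty$ cases go through as you indicate. Your argument buys a short, elementary proof avoiding the maximal function $h^{**}$; the citation route buys, in O'Neil's formulation, additional information (the $**$-inequality itself and the convolution estimates used elsewhere in the paper, e.g.\ Lemma \ref{lem:Young}).
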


The second one is the following refinement of Sobolev's inequality (see e.g. \cite[Theorem 7.34]{AF03} and \cite[Theorem 6.5.1]{BL76}).
\begin{lem}\label{lem:embedding}
Let $0<s<d/2$. Then for all $u\in \thSob{s}$, we have
\[   \norm{u}{\NLor{{2d}/{(d-2s)}}{2}}\apprle \norm{u}{\thSob{s}}.\]
\end{lem}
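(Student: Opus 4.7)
The plan is to realize every $u \in \dot H^s$ as a Riesz potential of $\Lambda^s u \in L_2$ and then apply Young's convolution inequality in Lorentz spaces. Concretely, since $\Lambda^{-s} : \dot H^s \to L_2$ is an isometric isomorphism (cf.\ the discussion after Lemma \ref{lem:density-approx-S0}), and since, up to a constant $c_{d,s}$ depending only on $d$ and $s$, one has $\Lambda^{-s} = c_{d,s}\,|\cdot|^{s-d} * \,\cdot\,$ on $\mathscr{S}_0$, one expects the convolution representation
\[
u = c_{d,s}\, |\cdot|^{s-d} * \Lambda^s u.
\]
The kernel $|x|^{s-d}$ lies in the weak Lebesgue space $\Lor{d/(d-s)}{\infty}$ with quasi-norm depending only on $d$ and $s$, so the estimate should then follow from the Lorentz-space version of Young's inequality.

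First, I would establish the Riesz potential representation rigorously. On the Fourier side, for $\phi \in \mathscr{S}_0$ one has $\widehat{\Lambda^{-s}\phi}(\xi) = (2\pi|\xi|)^{-s}\hat\phi(\xi)$, which matches the Fourier transform of $c_{d,s}|\cdot|^{s-d}*\phi$ off the origin; since $\mathscr{S}_0$ excludes the origin in frequency (Lemma \ref{lem:polynomial-isomorphism}), the two distributions agree in $\mathscr{S}_0'$. By density of $\mathscr{S}_0$ in $\dot H^s$ (Lemma \ref{lem:density-approx-S0}(3)), the identity extends to all $u \in \dot H^s$ in the sense of distributions, once the right-hand side is shown to define a locally integrable function --- which it is, by the Lorentz-Young bound in the next step.

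Second, I would apply the O'Neil convolution inequality
\[
\norm{f*g}{\NLor{r}{q}} \lesssim \norm{f}{\NLor{p_1}{q_1}}\norm{g}{\NLor{p_2}{q_2}}, \qquad 1+\tfrac{1}{r}=\tfrac{1}{p_1}+\tfrac{1}{p_2},\quad \tfrac{1}{q}\le \tfrac{1}{q_1}+\tfrac{1}{q_2},
\]
with $f = |\cdot|^{s-d} \in \Lor{d/(d-s)}{\infty}$ and $g = \Lambda^s u \in L_2 = \Lor{2}{2}$. Then $\tfrac{1}{r} = \tfrac{d-s}{d} + \tfrac{1}{2} - 1 = \tfrac{d-2s}{2d}$, so $r = 2d/(d-2s)$, while $\tfrac{1}{q} \le 0 + \tfrac{1}{2}$ allows $q = 2$. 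This yields
\[
\norm{u}{\NLor{2d/(d-2s)}{2}} \lesssim \norm{\,|\cdot|^{s-d}\,}{\NLor{d/(d-s)}{\infty}} \norm{\Lambda^s u}{\NLeb{2}} \lesssim \norm{u}{\thSob{s}},
\]
as required.

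The only real obstacle is the distributional bookkeeping in Step 1: the homogeneous Sobolev space $\dot H^s$ lives in $\mathscr{S}_0'$ (i.e.\ modulo polynomials), so one must be careful that the convolution $|\cdot|^{s-d}*\Lambda^s u$ is meaningful and that its equivalence class in $\mathscr{S}_0'$ coincides with $u$. This is handled by first verifying the identity on the dense subspace $\mathscr{S}_0$ using the Fourier transform and Lemma \ref{lem:polynomial-isomorphism}, and then passing to the limit using the bound obtained in Step 2 together with completeness of $\Lor{2d/(d-2s)}{2}$. An alternative, avoiding Riesz potentials altogether, is to deduce the inequality by real interpolation between two Sobolev embeddings $\dot H^{s_i} \hookrightarrow \Leb{p_i}$ at exponents $s_0 < s < s_1$, using $(\dot H^{s_0},\dot H^{s_1})_{\theta,2} = \dot H^s$ and $(\Leb{p_0},\Leb{p_1})_{\theta,2} = \Lor{2d/(d-2s)}{2}$, which is the route taken in \cite[Theorem 6.5.1]{BL76}.
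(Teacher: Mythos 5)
Your argument is correct, but it is worth noting that the paper does not prove Lemma \ref{lem:embedding} at all: it simply cites \cite[Theorem 7.34]{AF03} and \cite[Theorem 6.5.1]{BL76}, where the standard proof is exactly the real-interpolation argument you sketch at the end ($(\dot H^{s_0},\dot H^{s_1})_{\theta,2}=\dot H^{s}$ against $(L_{p_0},L_{p_1})_{\theta,2}=L_{2d/(d-2s),2}$). Your main route — writing $u=c_{d,s}\,|\cdot|^{s-d}*\Lambda^{s}u$ and applying O'Neil's convolution theorem with $|\cdot|^{s-d}\in L_{d/(d-s),\infty}$ and $\Lambda^{s}u\in L_{2}=L_{2,2}$ — is a genuinely different, self-contained derivation, and the exponent bookkeeping ($1/r=(d-s)/d+1/2-1=(d-2s)/(2d)$, $q=2$) is right. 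Two remarks on what it relies on: first, you need the full Lorentz-space form of O'Neil's inequality with $q_2=2$ and Lorentz output, which is stronger than the paper's Lemma \ref{lem:Young} (that version has $q_1=\infty$, $g\in L_{p_2}$, and only Lebesgue or weak-type output), though it is indeed contained in \cite{O63}; second, the Riesz-kernel identity you use on $\mathscr{S}_0$ is essentially the paper's own Lemma \ref{prop:Fourier-transform-Gaussian} (valid for $0<\Re\lambda<d$, which covers $\lambda=s$ here), so that step could be anchored to the paper rather than asserted. Your handling of the $\mathscr{S}_0'$ (modulo-polynomials) identification by proving the identity on the dense subspace $\mathscr{S}_0$ and passing to the limit via the bound and completeness of $L_{2d/(d-2s),2}$ is the right way to close the distributional gap. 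In short: the interpolation route is shorter given the cited machinery, while your convolution route is more elementary in spirit and makes the constant's dependence on $d,s$ explicit.
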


The third one is the following generalization of  Young's convolution inequality due to O'Neil \cite[Theorem 2.6]{O63} (see also  \cite[Theorems 1.2.13 and 1.4.24]{G14-2}).
\begin{lem}\label{lem:Young}
Let $1 < p,p_1 < \infty$ and $1 \le p_2 < \infty$  satisfy
$$
\frac{1}{p}+1=\frac{1}{p_1}+\frac{1}{p_2} .
$$
Then for all $f \in L^{p_1, \infty}  $ and $g \in L^{p_2} $, we have
\begin{align*}
 \|f*g\|_{p}& \apprle  \|f\|_{p_1, \infty} \|g\|_{p_2} \quad\mbox{if}\,\, 1< p_2 < \infty;\\
\|f*g\|_{p,\infty}& \apprle  \|f\|_{p_1, \infty} \|g\|_{1} \quad\mbox{if}\,\,  p_2 =1 .
\end{align*}
\end{lem}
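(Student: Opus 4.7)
The estimate is classical (due to O'Neil); I would prove it by combining a weak-type endpoint with H\"older's inequality in Lorentz spaces and then applying real interpolation. The argument naturally splits according to the two cases in the statement.

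\emph{Step 1 (The case $p_2 = 1$).} Here $p = p_1$, and the target reads $\|f*g\|_{p_1, \infty} \apprle \|f\|_{p_1, \infty}\|g\|_1$. I would prove this via a Calder\'on--Zygmund-style truncation of $f$. Given $t > 0$, set $\lambda = t/(2\|g\|_1)$ and split $f = f_\lambda + f^\lambda$, where $f^\lambda = f\,\chi_{\{|f|>\lambda\}}$ and $f_\lambda = f - f^\lambda$. Since $\|f_\lambda\|_\infty \le \lambda$, one has $\|f_\lambda * g\|_\infty \le t/2$, so $\{|f*g|>t\} \subset \{|f^\lambda*g| > t/2\}$. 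The weak-$L^{p_1}$ bound $|\{|f|>s\}| \le \|f\|_{p_1,\infty}^{p_1} s^{-p_1}$ together with the layer-cake formula yields $\|f^\lambda\|_1 \apprle_{p_1} \|f\|_{p_1,\infty}^{p_1} \lambda^{1-p_1}$. Classical Young gives $\|f^\lambda * g\|_1 \le \|f^\lambda\|_1\|g\|_1$, and Chebyshev produces $|\{|f*g|>t\}| \apprle t^{-p_1}\|f\|_{p_1,\infty}^{p_1}\|g\|_1^{p_1}$, which is the claimed weak bound.

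\emph{Step 2 (The case $1 < p_2 < \infty$).} For fixed $f \in L^{p_1, \infty}$, the operator $T_f : g \mapsto f * g$ satisfies two endpoint estimates: from Step 1, $T_f : L^1 \to L^{p_1, \infty}$; and from Lemma \ref{lem:Holder} applied pointwise to $f(x-\cdot)$ and $g$, $T_f : L^{p_1', 1} \to L^\infty$, with operator norm $\apprle \|f\|_{p_1, \infty}$ in both cases. Real interpolation on the Lorentz scale, with parameter $\theta \in (0, 1)$ chosen so that $1/p_2 = (1-\theta) + \theta/p_1'$, then produces a bounded map $T_f : L^{p_2, p_2} = L^{p_2} \to L^{p, p_2}$ whose target index satisfies $1/p = (1-\theta)/p_1 = 1/p_1 + 1/p_2 - 1$ by a direct computation using the Young relation. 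Since the hypothesis $p_1 \ge 1$ forces $p_2 \le p$, the embedding $L^{p, p_2} \hookrightarrow L^{p, p} = L^p$ upgrades this to $\|f*g\|_p \apprle \|f\|_{p_1,\infty}\|g\|_{p_2}$.

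\emph{Main obstacle.} The delicate point is Step 2, which requires invoking the correct real-interpolation theorem on the Lorentz scale and carefully tracking second indices; one needs both the endpoint space $L^{p_1', 1}$ (not merely $L^{p_1'}$) and the interpolation formula $(L^{p_0, q_0}, L^{p_1, q_1})_{\theta, r} = L^{p, r}$. An entirely elementary but more computational alternative is to deduce everything from O'Neil's pointwise rearrangement inequality
\[ (f*g)^{**}(t) \le t\, f^{**}(t)\, g^{**}(t) + \int_t^\infty f^*(s)\, g^*(s)\, ds, \]
which reduces the lemma to Hardy-type integral inequalities on $(0, \infty)$ but forces us to handle the $f^{**}$ function and careful integrations by parts.
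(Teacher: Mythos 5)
Your argument is correct, but note that the paper does not prove this lemma at all: it is quoted as a known result of O'Neil, with references to \cite{O63} and to Grafakos \cite{G14-2}, so there is no in-paper proof to compare against. What you have written is essentially the standard self-contained proof from those references. Your Step 1 (truncate $f$ at height $\lambda=t/(2\|g\|_1)$, bound $\|f^{\lambda}\|_1\apprle \|f\|_{p_1,\infty}^{p_1}\lambda^{1-p_1}$ by the layer-cake formula, then Chebyshev) is sound and uses $p_1>1$ exactly where needed for the tail integral. Step 2 also checks out: the $L^{p_1',1}\to L^{\infty}$ endpoint does follow from the paper's Lemma \ref{lem:Holder} applied with target exponent $p=1$, $q=1$ (translation invariance of the rearrangement gives $\|f(x-\cdot)\|_{p_1,\infty}=\|f\|_{p_1,\infty}$), and with $\theta=p_1/p_2'$ one indeed gets $0<\theta<1$ (equivalent to $p<\infty$), $1/p=(1-\theta)/p_1=1/p_1+1/p_2-1$, and $p_2\le p$, so $L^{p,p_2}\hookrightarrow L^{p}$ closes the argument. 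The one tool you invoke beyond the paper's stated lemmas is the off-diagonal real interpolation identity $(L^{p_0,q_0},L^{p_1,q_1})_{\theta,r}=L^{p,r}$ for $p_0\neq p_1$, which is precisely \cite[Theorem 5.3.1]{BL76}, a reference the paper already uses, so the proof fits the paper's framework; your alternative route through O'Neil's rearrangement inequality $(f*g)^{**}(t)\le t f^{**}(t)g^{**}(t)+\int_t^{\infty}f^*g^*\,ds$ is the other standard option and is what O'Neil's original paper does.
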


Recall the following classical Gagliardo-Nirenberg interpolation inequality   (see e.g. \cite[Theorem 5.10]{BKO21}).
\begin{prop}\label{prop:Gagliardo}
Let $0\leq s_0< s$,  $1<p,p_1<\infty$, and $0<\theta<1$ satisfy
\[	\frac{1}{p}-\frac{s_0}{d}=\frac{1-\theta}{p_1}+\theta \left(\frac{1}{2}-\frac{s}{d}\right)\quad\text{and}\quad \theta\geq \frac{s_0}{s}.\]
Then for all $f\in \Leb{p_1}\cap \thSob{s}$, we have
\[	\norm{\Lambda^{s_0} f}{\NLeb{p}}\apprle \norm{f}{\NLeb{p_1}}^{1-\theta}\norm{\Lambda^{s}f}{\NLeb{2}}^{\theta}.\]	
\end{prop}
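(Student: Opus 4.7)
The plan is to split the task into (i) a Riesz-potential reduction that absorbs the extra regularity $\theta s - s_0$, and (ii) a pure Littlewood--Paley/Bernstein interpolation between $\Leb{p_1}$ and $\thSob{s}$.

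For step (i), I set $\sigma := \theta s - s_0 \geq 0$, which is nonnegative by the assumption $\theta \ge s_0/s$. The scaling identity in the hypothesis rewrites as
\[
\frac{1}{p} + \frac{\sigma}{d} \;=\; \frac{1-\theta}{p_1} + \frac{\theta}{2} \;=:\; \frac{1}{q}.
\]
If $\sigma>0$, the identity $\Lambda^{s_0} f = \Lambda^{-\sigma}(\Lambda^{\theta s} f)$ together with the Hardy--Littlewood--Sobolev inequality for the Riesz potential $\Lambda^{-\sigma}$ (applicable because $1<p,q<\infty$) yields $\|\Lambda^{s_0} f\|_p \apprle \|\Lambda^{\theta s} f\|_q$. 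If $\sigma=0$, then $q=p$ and the reduction is vacuous; this corresponds to the endpoint $\theta = s_0/s$.

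For step (ii), I would prove the reduced inequality
\[
\|\Lambda^{\theta s} f\|_q \apprle \|f\|_{p_1}^{1-\theta}\|\Lambda^s f\|_2^{\theta}, \qquad \tfrac{1}{q}=\tfrac{1-\theta}{p_1}+\tfrac{\theta}{2},
\]
via a homogeneous Littlewood--Paley decomposition $f=\sum_j \Delta_j f$. For a dyadic threshold $N>0$ to be optimized, split $f = f_{<N}+f_{\ge N}$. Bernstein's inequality combined with the Littlewood--Paley characterizations of $\Leb{p_1}$ and $\thSob{s}$ gives
\[
\|\Lambda^{\theta s} f_{<N}\|_q \apprle N^{\theta s + d(1/p_1-1/q)}\|f\|_{p_1}, \quad \|\Lambda^{\theta s} f_{\ge N}\|_q \apprle N^{-(1-\theta)s + d(1/2-1/q)}\|\Lambda^s f\|_2.
\]
A short computation using $\tfrac{1}{q}=\tfrac{1-\theta}{p_1}+\tfrac{\theta}{2}$ shows that the two exponents of $N$ stand in the ratio $\theta:(1-\theta)$, so optimizing over $N$ (take $N\sim(\|\Lambda^s f\|_2/\|f\|_{p_1})^{1/s}$) produces exactly the advertised powers $1-\theta$ and $\theta$ on the two data norms.

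The main obstacle will be the bookkeeping in the Bernstein step when $q$ falls outside the range in which the needed Bernstein direction is available (for instance when $q<p_1$ or $q<2$). In those regimes I would substitute a duality argument, or, more elegantly, invoke Stein's complex interpolation theorem on the analytic family $z\mapsto \Lambda^{zs}f$, interpolating between the identity on $\Leb{p_1}$ at $\operatorname{Re}z=0$ and the isometry $\Lambda^s:\thSob{s}\to\Leb{2}$ at $\operatorname{Re}z=1$; this bypasses frequency splitting entirely and delivers the reduced inequality directly. Everything else reduces to a scaling check, which is why this estimate is classical.
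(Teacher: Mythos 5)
The paper itself offers no proof of this proposition (it is quoted from \cite{BKO21}), so your argument stands on its own. Your two-step skeleton is viable, and the reduction step is set up correctly: with $\sigma=\theta s-s_0\ge 0$ one has $\tfrac1p+\tfrac\sigma d=\tfrac{1-\theta}{p_1}+\tfrac\theta2=\tfrac1q$, with $1<q<p<\infty$ and $0\le\sigma<d$, so the Riesz-potential (Hardy--Littlewood--Sobolev) bound legitimately reduces the claim to the endpoint inequality $\norm{\Lambda^{\theta s}f}{q}\apprle\norm{f}{p_1}^{1-\theta}\norm{\Lambda^s f}{2}^{\theta}$, modulo a routine density argument in $\Leb{p_1}\cap\thSob{s}$ (Lemma \ref{lem:density-approx-S0}(4)).

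The gap is in the Littlewood--Paley half, and it is wider than the ``bookkeeping'' you flag. First, whenever $p_1\neq 2$ the exponent $q$ lies strictly between $p_1$ and $2$, so exactly one of your two Bernstein steps goes in the false direction: on $\mathbb{R}^d$ one cannot bound a lower Lebesgue norm by a higher one, even for annulus-localized functions (test $e^{ix\cdot\xi_j}\phi(x/R)$, $R\to\infty$). Thus the splitting proof as written only runs for $p_1=2$. Second, and unacknowledged: the two exponents of $N$ are $\theta K$ and $-(1-\theta)K$ with $K=s+\tfrac{d}{p_1}-\tfrac d2$, so when $\tfrac1{p_1}\le\tfrac12-\tfrac sd$ (a regime allowed here, in contrast with Proposition \ref{prop:upper-critical}, which excludes the equality case) both dyadic sums diverge and no choice of $N$ — nor a duality twist — rescues the splitting; also the optimizing threshold is $N=(\norm{\Lambda^s f}{2}/\norm{f}{p_1})^{1/K}$, not the power $1/s$, unless $p_1=2$. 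Consequently your ``fallback'' must carry the entire endpoint inequality. It can, but not by citing Stein's theorem verbatim, since the two endpoint hypotheses involve different norms of the same $f$ rather than one operator between fixed Lebesgue spaces: one should run a three-lines argument on $F(z)=\int \Lambda^{zs}f\,\overline{g_z}\,dx$ with $g_z=|g|^{q'(\frac{1-z}{p_1'}+\frac z2)}\sgn g$, using $\norm{\Lambda^{i\tau s}f}{p_1}\apprle(1+|\tau|)^{N_0}\norm{f}{p_1}$ (Mikhlin, $1<p_1<\infty$) at $\Re z=0$ and the Plancherel identity $\norm{\Lambda^{(1+i\tau)s}f}{2}=\norm{\Lambda^s f}{2}$ at $\Re z=1$; the exponent of $\norm{g}{q'}$ then comes out to be $1$ precisely because $\tfrac1q=\tfrac{1-\theta}{p_1}+\tfrac\theta2$. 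If you promote that argument to the main proof and keep your HLS reduction, the proposition follows in full; as submitted, the central case is deferred to a one-sentence remark.
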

Particularly, if $s_0 =0$ and $p_1 =2$ in Proposition \ref{prop:Gagliardo}, we deduce the well-known Sobolev embedding
$H^s \hookrightarrow L^p $ for $s>0$ and $2 \le p<\infty$ satisfying $1/p \ge 1/2 -s/d$.

The Gagliardo-Nirenberg interpolation inequality can be refined by using Lorentz spaces.  The following is a special case of the Gagliardo-Nirenberg interpolation inequalities in Sobolev-Lorentz spaces (see  \cite[Theorem 1.1]{WWY21} and \cite[Theorem 5.17]{BKO21}).
\begin{prop}\label{prop:upper-critical}
Let $  s >0$, $1<p , p_1 <  \infty$, and $0<\theta<1$ satisfy
\[ \frac{1}{p}=\frac{1-\theta}{p_1}+\theta \left(\frac{1}{2}-\frac{s}{d} \right)\quad \text{and}\quad \frac{1}{p_1}\neq \frac{1}{2}-\frac{s}{d}.  \]
Then for all   $f\in \Lor{p_1}{\infty}\cap \thSob{s}$, we have
\[   \norm{f }{\NLor{p}{1}}\apprle \norm{f}{\NLor{p_1}{\infty}}^{1-\theta}\norm{\Lambda^s f}{\NLeb{2}}^{\theta}.\]
\end{prop}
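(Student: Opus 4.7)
My plan is to prove Proposition~\ref{prop:upper-critical} by a Littlewood--Paley decomposition combined with Bernstein-type estimates in Lorentz spaces, a Chebyshev bound for the distribution function, and an integration based on the classical identity $\norm{f}{\NLor{p}{1}} = p\int_0^\infty d_f(\lambda)^{1/p}\,d\lambda$. Since the scaling identity $1/p=(1-\theta)/p_1+\theta(1/2-s/d)$ is precisely the one that makes the desired inequality invariant under the two-parameter dilation $f\mapsto c\,f(\rho\,\cdot)$, I would first normalize to the case $A:=\norm{f}{\NLor{p_1}{\infty}}=1$ and $B:=\norm{\Lambda^s f}{\NLeb{2}}=1$, and reduce the task to proving $\norm{f}{\NLor{p}{1}}\apprle 1$.

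Next, I would fix $\chi\in C_c^\infty(\mathbb{R}^d)$ with $\chi\equiv 1$ on $B_1$ and $\supp\chi\subset B_2$, and set $P_{\le N}f=(\chi(\cdot/2^N)\widehat{f})^\vee$, $P_{>N}f = f - P_{\le N}f$. Writing $P_{\le N}f = f*K_N$ with $K_N(x) = 2^{Nd}\check\chi(2^Nx)$, an application of H\"older's inequality in Lorentz spaces (Lemma~\ref{lem:Holder}, pointwise in $x$) combined with $\norm{K_N}{\NLor{p_1'}{1}}\apprle 2^{Nd/p_1}$ yields
\[
\norm{P_{\le N}f}{\NLeb{\infty}}\apprle \norm{f}{\NLor{p_1}{\infty}}\,\norm{K_N}{\NLor{p_1'}{1}}\apprle 2^{Nd/p_1},
\]
while Plancherel gives $\norm{P_{>N}f}{\NLeb{2}}\apprle 2^{-Ns}\norm{\Lambda^s f}{\NLeb{2}} = 2^{-Ns}$.

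For each $\lambda>0$, choose $N_\lambda\in\mathbb{Z}$ so that $2^{N_\lambda d/p_1}\approx \lambda$, whence $\norm{P_{\le N_\lambda}f}{\NLeb{\infty}}\le \lambda/2$. Chebyshev's inequality then gives
\[
d_f(\lambda) \le d_{P_{>N_\lambda}f}(\lambda/2) \le 4\lambda^{-2}\norm{P_{>N_\lambda}f}{\NLeb{2}}^2 \apprle \lambda^{-2-2sp_1/d},
\]
whereas the normalization $A=1$ provides directly $d_f(\lambda)\le \lambda^{-p_1}$. Using the smaller of the two bounds on each side of the crossover $\lambda_*$ (where $\lambda^{-p_1}$ and $\lambda^{-2-2sp_1/d}$ agree), the constraint $\theta\in(0,1)$ together with $1/p=(1-\theta)/p_1+\theta(1/2-s/d)$ forces the resulting exponent of $\lambda$ in $d_f(\lambda)^{1/p}$ to be strictly less than $1$ on the small-$\lambda$ side and strictly greater than $1$ on the large-$\lambda$ side, so $\int_0^\infty d_f(\lambda)^{1/p}\,d\lambda \apprle 1$, which is the desired estimate.

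The main obstacle is the role of the non-degeneracy hypothesis $1/p_1\ne 1/2-s/d$: at the critical value $p_1 = 2d/(d-2s)$ the two distribution-function bounds share the same $\lambda$-scaling, and the integrand $d_f(\lambda)^{1/p}$ acquires a logarithmic divergence at the crossover, so the endpoint $L^{p,1}$ bound would fail in that case. The two regimes $p_1<2d/(d-2s)$ and $p_1>2d/(d-2s)$ have to be handled separately, since the ``correct'' bound on the low- and high-$\lambda$ sides is exchanged between them; bookkeeping these cases while verifying the integrability conditions at both endpoints, uniformly in the admissible range of $\theta$, is the technical heart of the argument.
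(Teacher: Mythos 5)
Your overall strategy (scaling normalization, low/high frequency splitting, two bounds on the distribution function, and the layer-cake identity for $\|f\|_{p,1}$) is sensible, and the individual estimates $\norm{P_{\le N}f}{\NLeb{\infty}}\apprle 2^{Nd/p_1}\norm{f}{\NLor{p_1}{\infty}}$ and $\norm{P_{>N}f}{\NLeb{2}}\apprle 2^{-Ns}\norm{\Lambda^s f}{\NLeb{2}}$ are correct. The gap is in the final claim that the scaling relation and $\theta\in(0,1)$ ``force'' the two exponents of $\lambda$ in $d_f(\lambda)^{1/p}$ to straddle $1$. They do not. Your Chebyshev step only yields $d_f(\lambda)\apprle \lambda^{-2-2sp_1/d}$, and the exponent $2+2sp_1/d$ need not exceed $p$. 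Concretely, take $d=10$, $s=4$, $p_1=10/9$, $\theta=0.99$: then $1/p=0.108$, so $p\approx 9.26$, all hypotheses of the proposition hold (and $1/p_1=0.9\neq 0.1=1/2-s/d$), but $2+2sp_1/d=26/9\approx 2.89<p$, so $d_f(\lambda)^{1/p}\apprle\lambda^{-0.31}$ is not integrable at infinity and your crossover integral diverges; the weak-$L^{p_1}$ bound $\lambda^{-p_1/p}$ is even worse there. A symmetric failure occurs in the regime $1/p_1<1/2-s/d$ at the small-$\lambda$ end. In both cases the breakdown happens when $\theta$ is close to $1$, i.e.\ when $p$ leans on the Sobolev endpoint: measuring the high-frequency piece only in $\Leb{2}$ throws away exactly the integrability gain that the hypothesis $f\in\thSob{s}$ provides, so the method covers only part of the admissible range of $\theta$. (Your remark about the log divergence at $1/p_1=1/2-s/d$ is correct, but it is not the main obstruction.)

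The repair is to keep your low-frequency bound but estimate the high-frequency piece in the Sobolev--Lorentz scale rather than in $\Leb{2}$: for $s<d/2$, Bernstein on dyadic blocks (or Lemma \ref{lem:embedding}) gives $\norm{P_{>N}f}{\NLor{p_2}{\infty}}\apprle \norm{\Lambda^s f}{\NLeb{2}}$ with $1/p_2=1/2-s/d$, so the two distribution bounds become $\lambda^{-p_1}$ and $\lambda^{-p_2}$, and since the scaling relation places $p$ strictly between $p_1$ and $p_2$ the crossover integral converges for every $\theta\in(0,1)$; for $s\ge d/2$ one instead sums Bernstein to get $\norm{P_{>N}f}{\NLeb{\infty}}\apprle 2^{-N(s-d/2)}\norm{\Lambda^s f}{\NLeb{2}}$ (with a minor modification at $s=d/2$), which under your normalization makes $d_f(\lambda)$ vanish for large $\lambda$, and the small-$\lambda$ side is handled by $\lambda^{-p_1}$ with $p_1<p$. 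Equivalently, and more efficiently, one can invoke the real interpolation identity $(\Lor{p_1}{\infty},\Lor{p_2}{\infty})_{\theta,1}=\Lor{p}{1}$ for $p_1\neq p_2$ together with $\thSob{s}\hookrightarrow \Lor{p_2}{2}$; this is essentially the route of the references \cite{WWY21,BKO21} that the paper cites for this proposition (the paper itself gives no proof).
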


\subsection{Product estimates}

In this subsection, we prove   product estimates that will be used in the paper. The first product estimate plays a crucial role in the proof of existence and uniqueness of weak solutions of \eqref{eq:MHD-relaxed}.
\begin{lem}\label{lem:product-estimate}
  Let $\alpha , \beta>0$ satisfy $1/2<\alpha<(d+1)/2$ and  $\alpha+\beta >d/2$. Then there exist $0<\theta_1,\theta_2<1$ such that
  \[ \norm{fg}{\NLeb{2}}\apprle \norm{f}{\NLor{d/(d+1-2\alpha)}{\infty}}^{1-\theta_1}\norm{f}{\thSob{\alpha}}^{\theta_1}\norm{g}{\NLeb{2}}^{1-\theta_2}\norm{g}{\thSob{\beta}}^{\theta_2}   \]
  for all $f\in \Lor{d/(d+1-2\alpha)}{\infty}\cap \thSob{\alpha}$ and $g\in \tSob{\beta}$. Moreover, if $\alpha+(1-\mu)\beta \geq d/2+1$ for some $0\leq \mu \le 1$, then $\theta_1$ and $\theta_2$ can be chosen so that $(1+\mu)\theta_1+\theta_2+1/\beta\leq 2$.
\end{lem}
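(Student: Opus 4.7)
The plan is to derive the product estimate by combining Hölder's inequality in Lorentz spaces (Lemma~\ref{lem:Holder}) with the interpolation inequalities of Propositions~\ref{prop:Gagliardo} and~\ref{prop:upper-critical}. Specifically, I would first apply Hölder in the form
\[
\norm{fg}{2}\apprle \norm{f}{\NLor{\widetilde{p}_1}{1}}\norm{g}{\NLor{\widetilde{p}_2}{\infty}},
\]
where $\widetilde{p}_1,\widetilde{p}_2\in(1,\infty)$ satisfy $1/\widetilde{p}_1+1/\widetilde{p}_2=1/2$; the Lorentz secondary indices $(1,\infty)$ are admissible since $1+0\geq 1/2$. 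The factor for $f$ will be controlled by Proposition~\ref{prop:upper-critical} with data in $L^{d/(d+1-2\alpha),\infty}$ and smoothness $s=\alpha$, and the factor for $g$ by Proposition~\ref{prop:Gagliardo} with $s_0=0$, $p_1=2$, $s=\beta$ followed by the embedding $L^{\widetilde{p}_2}\hookrightarrow L^{\widetilde{p}_2,\infty}$.

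Concretely, Proposition~\ref{prop:upper-critical} yields
\[
\norm{f}{\NLor{\widetilde{p}_1}{1}}\apprle \norm{f}{\NLor{d/(d+1-2\alpha)}{\infty}}^{1-\theta_1}\norm{f}{\thSob{\alpha}}^{\theta_1},\qquad \frac{1}{\widetilde{p}_1}=(1-\theta_1)\frac{d+1-2\alpha}{d}+\theta_1\left(\frac12-\frac{\alpha}{d}\right),
\]
and Proposition~\ref{prop:Gagliardo} yields
\[
\norm{g}{\NLeb{\widetilde{p}_2}}\apprle \norm{g}{2}^{1-\theta_2}\norm{g}{\thSob{\beta}}^{\theta_2},\qquad \frac{1}{\widetilde{p}_2}=\frac12-\frac{\theta_2\beta}{d}.
\]
Imposing $1/\widetilde{p}_1+1/\widetilde{p}_2=1/2$ reduces, after a short algebraic simplification, to the single linear relation
\begin{equation}\label{eq:theta-relation}
\theta_2\beta=(d+1-2\alpha)-\theta_1\left(\frac{d}{2}+1-\alpha\right).
\end{equation}
To complete the first part of the lemma I must exhibit $\theta_1,\theta_2\in(0,1)$ satisfying \eqref{eq:theta-relation} together with the technical requirements $\widetilde{p}_1,\widetilde{p}_2\in(1,\infty)$ and the non-degeneracy condition $d/(d+1-2\alpha)\neq 2d/(d-2\alpha)$ from Proposition~\ref{prop:upper-critical}; the latter reduces to $\alpha\neq (d+2)/2$, which is automatic from $\alpha<(d+1)/2$. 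Using $\alpha<(d+1)/2$ and $\alpha+\beta>d/2$, a routine comparison of the relevant rational expressions in $\alpha,\beta,d$ shows that the interval of admissible $\theta_1$ is nonempty and that $\theta_2$ then lies in $(0,1)$ as well.

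For the additional constraint under the stronger hypothesis $\alpha+(1-\mu)\beta\geq d/2+1$, I would substitute \eqref{eq:theta-relation} to rewrite
\[
(1+\mu)\theta_1+\theta_2+\frac{1}{\beta}=\theta_1\left[(1+\mu)-\frac{d/2+1-\alpha}{\beta}\right]+\frac{d+2-2\alpha}{\beta}.
\]
The hypothesis gives $(d/2+1-\alpha)/\beta\leq 1-\mu$, so the bracketed coefficient is bounded below by $2\mu\geq 0$ while the constant term equals $2(d/2+1-\alpha)/\beta\leq 2(1-\mu)\leq 2$. Choosing $\theta_1$ near the lower end of its admissible interval then forces the expression to be $\leq 2$; in the borderline case $\mu=0$ and $\alpha+\beta=d/2+1$ both the bracketed coefficient and $2$ minus the constant term vanish, so the constraint holds for every admissible $\theta_1$.

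The main technical obstacle is the bookkeeping in the last paragraph: verifying that the admissibility conditions $\theta_1\in(0,1)$, $\theta_2\in(0,1)$, $\theta_2\beta<d/2$ (so $\widetilde{p}_2>1$), and the added bound for the second part are simultaneously realizable under the stated hypotheses. This reduces to ordering several explicit rational expressions in $\alpha,\beta,d$, which is elementary but requires care at the borderline cases $\alpha=d/2$, $\beta=d+1-2\alpha$, and $\alpha+(1-\mu)\beta=d/2+1$.
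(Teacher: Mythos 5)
Your reduction is essentially the paper's: H\"older plus Proposition \ref{prop:upper-critical} for $f$ and Proposition \ref{prop:Gagliardo} (with $p_1=2$) for $g$, leading to exactly the single relation $\theta_2\beta=(d+1-2\alpha)-\theta_1\left(\tfrac{d}{2}+1-\alpha\right)$; the paper parametrizes the same relation by an auxiliary exponent $p$ (see \eqref{eq:GN-pair} and \eqref{eq:p-theta-relation}, with $\theta_1=(d+1-2\alpha-d/p)/(d/2+1-\alpha)$, $\theta_2=d/(p\beta)$) and uses plain Lebesgue H\"older $\norm{fg}{2}\le\norm{f}{p}\norm{g}{2p/(p-2)}$ rather than your Lorentz-index version, a purely cosmetic difference. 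The issue is that the step you defer as ``routine bookkeeping'' is precisely where the paper's proof does its work: one must actually exhibit an admissible $\theta_1$ (equivalently a $p$ with $d/2-d/p<\alpha<(d+1)/2-d/(2p)$ and $\beta>d/p$). The paper does this by a two-case construction ($\alpha\ge d/2$: take $p$ large; $1/2<\alpha<d/2$: take any $p$ with $\max\{2,d/\beta,d/(d+1-2\alpha)\}<p<d/(d/2-\alpha)$, the interval being nonempty because $\alpha+\beta>d/2$). In your normalization this amounts to verifying $\max\{0,\,d/2-\alpha\}<\min\{\beta,\,d/2,\,d+1-2\alpha\}$, which indeed follows from $\alpha>0$, $\alpha<(d+1)/2$, and $\alpha+\beta>d/2$, but you should write this out; as it stands the existence of $(\theta_1,\theta_2)$ --- the main assertion of the first part --- is only asserted.

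For the second part, your argument ``choose $\theta_1$ near the lower end of its admissible interval'' is not justified as stated: the admissible interval need not reach down to $0$ (for instance, if $\beta<d+1-2\alpha$ then $\theta_2<1$ forces $\theta_1>(d+1-2\alpha-\beta)/(d/2+1-\alpha)>0$), so knowing only that the constant term $B=(d+2-2\alpha)/\beta$ satisfies $B\le 2$ does not show $\theta_1A+B\le 2$ at that lower end when $A>0$. The clean fix, which is what the paper actually proves, is that no special choice is needed: since $A=(1+\mu)-(d/2+1-\alpha)/\beta\ge 2\mu\ge 0$ and the value at $\theta_1=1$ is $A+B=(1+\mu)+(d/2+1-\alpha)/\beta\le(1+\mu)+(1-\mu)=2$, monotonicity in $\theta_1$ gives $\theta_1A+B\le 2$ for \emph{every} $\theta_1\in(0,1)$ satisfying your relation (this also subsumes your borderline case $\mu=0$, $\alpha+\beta=d/2+1$). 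With that correction and the feasibility check written out, your argument is complete and coincides with the paper's.
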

\begin{proof}   We will show that there exists $2< p< \infty$ satisfying
  \begin{equation}\label{eq:GN-pair}
      \frac{d}{2}-\frac{d}{p}<\alpha<\frac{d+1}{2}-\frac{d}{2p}\quad \text{and}\quad \beta >\frac{d}{p}.
  \end{equation}
  For such   $p$, we define
  \[ \theta_1=\frac{d+1-2\alpha-d/p}{{d}/{2}+1-\alpha}\quad \text{and }\quad \theta_2 =\frac{d}{p\beta}. \]
  Then  since
  \begin{equation}\label{eq:p-theta-relation}
\begin{aligned}
\frac{1}{p} &= (1-\theta_1 ) \frac{d+1-2\alpha}{d} +\theta_1 \left(\frac{1}{2}-\frac{\alpha}{d}\right),\\
  \frac{p-2}{2p} &= \frac{1-\theta_2}{2} +\theta_2 \left(\frac{1}{2}-\frac{\beta}{d}\right),
\quad\mbox{and}\quad
 0< \theta_1 , \theta_2 < 1 ,
\end{aligned}
\end{equation}
it follows from  H\"older's inequality and  Proposition \ref{prop:upper-critical} that
\begin{align*}
  \norm{fg}{\NLeb{2}}\leq \norm{f}{\NLeb{p}}\norm{g}{\NLeb{2p/(p-2)}}\apprle \norm{f}{\NLor{d/(d+1-2\alpha)}{\infty}}^{1-\theta_1}\norm{f}{\thSob{\alpha}}^{\theta_1}\norm{g}{\NLeb{2}}^{1-\theta_2}\norm{g}{\thSob{\beta}}^{\theta_2}.
\end{align*}
Moreover, since
\begin{equation*}
{\beta \theta_2} = \frac{d}{p}= (1-\theta_1)\left({d+1-2\alpha}\right)+\theta_1 \left(\frac{d}{2}-{\alpha}\right),
\end{equation*}
we deduce that  if $\alpha+(1-\mu)\beta \geq d/2+1$ for some $0\leq \mu \le   1$, then
\begin{align*}
\beta\left[(1+\mu)\theta_1+\theta_2-2\right]+1&=
\left[\alpha+(1+\mu)\beta-\frac{d}{2}-1\right] \theta_1 -2\alpha -2\beta +d+2\\
&\leq -\alpha-(1-\mu)\beta+\frac{d}{2}+1\leq 0.
\end{align*}
Therefore it remains to prove existence of  $2< p<\infty$ satisfying \eqref{eq:GN-pair}.

 First, if ${d}/{2}\leq \alpha<{(d+1)}/{2}$ and $\beta >0$, then choosing a sufficiently large $2<p<\infty$, we obviously have
\[
\frac{d}{2}-\frac{d}{p}< \frac{d}{2} \le \alpha<\frac{d+1}{2}-\frac{d}{2p}\quad \text{and}\quad \beta >\frac{d}{p}.
\]
Suppose next that ${1}/{2}<\alpha< {d}/{2}$  and $ \beta >d/2 -\alpha$.
Then
\[
\max \left\{ 2 , \frac{d}{\beta}, \frac{d}{d+1-2\alpha} \right\} < \frac{d}{{d}/{2}-\alpha} < \infty  .
\]
Choosing any $p$ such that
\[
\max \left\{ 2 , \frac{d}{\beta}, \frac{d}{d+1-2\alpha} \right\} < p< \frac{d}{{d}/{2}-\alpha} ,
\]
we have
\[
2<p<\infty , \quad \frac{d}{2}-\frac{d}{p}<   \alpha<\frac{d+1}{2}-\frac{d}{2p} , \quad \text{and}\quad \beta >\frac{d}{p}.
\]
This completes the proof of Lemma \ref{lem:product-estimate}.
  \end{proof}

To prove uniqueness of weak solutions of \eqref{eq:MHD-relaxed} particularly when $1/2<\alpha <1$, we also need the following product estimate. 
\begin{lem}\label{lem:commutator}
Let  $\gamma>d/2$ and $0<s\leq \gamma$. Then for all $f\in \tSob{s}$ and $g\in \tSob{\gamma}$,
\[	\norm{\Lambda^s(fg)}{\NLeb{2}}\leq C \norm{f}{\tSob{s}}\norm{g}{\tSob{\gamma}}, \]
where $C=C(d,\gamma,s)>0$.
\end{lem}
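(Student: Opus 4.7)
The plan is to reduce the target to an inhomogeneous Bessel-potential norm. The pointwise bound $(2\pi|\xi|)^{s}\le (1+4\pi^{2}|\xi|^{2})^{s/2}$, valid for $s\ge 0$, together with Plancherel gives $\norm{\Lambda^{s}(fg)}{2}\le \norm{J^{s}(fg)}{2}$; note that $fg$ is a well-defined tempered distribution since $g\in \tSob{\gamma}\hookrightarrow \Leb{\infty}$ (because $\gamma>d/2$) and $f\in \Leb{2}$, so $fg\in \Leb{2}$. It therefore suffices to establish the inhomogeneous bound $\norm{J^{s}(fg)}{2}\apprle \norm{f}{\tSob{s}}\norm{g}{\tSob{\gamma}}$.

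For this I would invoke the classical Kato--Ponce (fractional Leibniz) estimate
\[
\norm{J^{s}(fg)}{2} \apprle \norm{J^{s}f}{p_{1}}\norm{g}{q_{1}} + \norm{f}{p_{2}}\norm{J^{s}g}{q_{2}},
\]
valid for $s>0$ and H\"older pairs $1/p_{i}+1/q_{i}=1/2$ with $p_{i},q_{i}\in(1,\infty]$. Taking $(p_{1},q_{1})=(2,\infty)$ and using the Sobolev embedding $\tSob{\gamma}\hookrightarrow \Leb{\infty}$ (a direct corollary of Proposition \ref{prop:Gagliardo} with $s_0=0,p_1=2$ and $\gamma > d/2$), the first term is controlled by $\norm{f}{\tSob{s}}\norm{g}{\tSob{\gamma}}$.

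For the second term, I would select $(p_{2},q_{2})$ with $p_{2},q_{2}\in[2,\infty]$, $1/p_{2}+1/q_{2}=1/2$, such that both Sobolev embeddings $\tSob{s}\hookrightarrow \Leb{p_{2}}$ and $\tSob{\gamma-s}\hookrightarrow \Leb{q_{2}}$ hold. These require $1/p_{2}\ge (1/2-s/d)_{+}$ and $1/q_{2}\ge (1/2-(\gamma-s)/d)_{+}$, respectively, and the sum of these lower bounds is at most $(1-\gamma/d)_{+}<1/2$ precisely because $\gamma>d/2$, so a compatible pair always exists (after perturbing away from any critical Sobolev index). Since $s\le\gamma$, one has $J^{s}g\in \tSob{\gamma-s}$ with $\norm{J^{s}g}{q_{2}}\apprle \norm{g}{\tSob{\gamma}}$, so the second term is also bounded by $\norm{f}{\tSob{s}}\norm{g}{\tSob{\gamma}}$, completing the argument.

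The main obstacle is supplying (or invoking) the Kato--Ponce inequality, which is classical but not developed in the preliminaries of this paper, together with some bookkeeping at the borderline Sobolev exponents $s=d/2$ or $\gamma-s=d/2$; the strict inequality $\gamma>d/2$ supplies just enough slack to perturb these exponents. A self-contained alternative would be a Bony paraproduct decomposition $fg=T_{f}g+T_{g}f+R(f,g)$: the paraproduct $T_{g}f$ is controlled by $\norm{g}{\infty}\norm{f}{\thSob{s}}$; the paraproduct $T_{f}g$ by Bernstein inequalities combined with $\tSob{\gamma}\hookrightarrow \Leb{\infty}$; and the remainder $R(f,g)$ by exploiting the pointwise frequency bound $\norm{\Delta_{j}g}{\infty}\apprle 2^{-j(\gamma-d/2)}\norm{g}{\tSob{\gamma}}$, summed via a discrete Hardy inequality that converges again thanks to $\gamma>d/2$.
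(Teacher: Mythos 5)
Your proof is correct and is essentially the paper's own argument: the paper likewise invokes the fractional Leibniz (Kato--Ponce) estimate from Grafakos--Oh, pairing one term as $\|\Lambda^s f\|_2\|g\|_\infty$ via $H^\gamma\hookrightarrow L^\infty$ and the other via Sobolev embeddings of $H^s$ and $H^{\gamma-s}$ into a H\"older pair, with the case $s=\gamma$ handled by the pair $(\infty,2)$. One minor slip: your claim that the sum of the lower bounds is at most $(1-\gamma/d)_+<1/2$ fails when $s>d/2$ (for $s=\gamma$ the sum equals $1/2$, leaving no room to perturb), but the required pair still exists there by taking $p_2=\infty$, $q_2=2$ and using $H^\gamma\hookrightarrow L^\infty$, which is exactly the paper's separate second case, so the argument goes through.
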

\begin{proof}
A proof is given in \cite[Lemma 3.2]{KZ20}. However,  we provide a detailed proof of the lemma for the sake of convenience.

Suppose that $0<s < \gamma$. Then since $\gamma >d/2$, there exists $2 < p <  \infty$ such that
\[	\max \left\{ \frac{1}{2}-\frac{s}{d},0\right\} < \frac{1}{p} < \min \left\{\frac{1}{2},\frac{\gamma-s}{d}\right\}.\]
By the Sobolev embedding theorem, we have
\[	\norm{f}{\NLeb{p}}\apprle \norm{f}{\tSob{s}}\quad \text{and}\quad\norm{J^s g}{\NLeb{{2p}/{(p-2)}}}\apprle \norm{J^s g}{\tSob{\gamma-s}}.\]
Therefore, it follows from the fractional Leibniz rule (see e.g. \cite[Theorem 1]{GO14}) and Sobolev embedding theorem that
\begin{align*}
\norm{\Lambda^s(fg)}{\NLeb{2}}&\apprle \norm{f}{\NLeb{p}}\norm{\Lambda^s g}{\NLeb{{2p}/{(p-2)}}}+\norm{\Lambda^s f}{\NLeb{2}}\norm{g}{\NLeb{\infty}}\\
&\apprle \norm{f}{\tSob{s}}\norm{J^s g}{\tSob{\gamma-s}}+\norm{\Lambda^s f}{\NLeb{2}}\norm{g}{\tSob{\gamma}}\\
&\apprle \norm{f}{\tSob{s}}\norm{g}{\tSob{\gamma}}.
\end{align*}

Similarly, if $s =\gamma$, then
\begin{align*}
\norm{\Lambda^s(fg)}{\NLeb{2}}&\apprle \norm{f}{\NLeb{\infty}}\norm{\Lambda^s g}{\NLeb{2}}+\norm{\Lambda^s f}{\NLeb{2}}\norm{g}{\NLeb{\infty}} \\
&\apprle \norm{f}{\tSob{s}}\norm{g}{\tSob{\gamma}}.
\end{align*}
This completes the proof of Lemma \ref{lem:commutator}.
\end{proof}

\begin{lem}\label{lem:product2}
Let    $\alpha$ and $\beta$ satisfy either one of the following   conditions:
\begin{enumerate}[label=\textnormal{(\arabic*)}]
\item  $\alpha <1$, $\beta >d/2$, and $\alpha +\beta \ge 1$.
\item $1 \le \alpha <d/2 +1$  and $\alpha +2 \beta \ge d/2 +1$.
\end{enumerate}
Then for all $f , g\in \tSob{\beta}$,
\[	\norm{fg}{\thSob{1-\alpha}}\leq C \norm{f}{\tSob{\beta}}\norm{g}{\tSob{\beta}}, \]
where $C=C(d,\alpha , \beta )>0$.
\end{lem}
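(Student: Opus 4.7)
The plan is to handle the two cases by different mechanisms. In Case (1) the target index $s:=1-\alpha$ is positive and no larger than $\beta$, so the bound reduces essentially to Lemma \ref{lem:commutator}. In Case (2), $1-\alpha\le 0$, so the claim becomes a dual bound, which I would establish via Hölder's inequality together with the Sobolev embedding $\thSob{\sigma}\hookrightarrow L^p$.

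For Case (1), apply Lemma \ref{lem:commutator} with $\gamma=\beta$ and $s=1-\alpha$. The hypothesis $\alpha+\beta\ge 1$ yields $s\le\gamma$, while $\alpha<1$ gives $s>0$ and $\beta>d/2$ supplies $\gamma>d/2$. The lemma then produces
\[
\|fg\|_{\thSob{1-\alpha}}=\|\Lambda^{1-\alpha}(fg)\|_{2}\le C\|f\|_{\tSob{1-\alpha}}\|g\|_{\tSob{\beta}}\le C\|f\|_{\tSob{\beta}}\|g\|_{\tSob{\beta}},
\]
the last inequality using monotonicity of inhomogeneous Sobolev norms in the index (since $1-\alpha\le\beta$).

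For Case (2), set $\sigma:=\alpha-1\ge 0$; the hypothesis $\alpha<d/2+1$ gives $\sigma<d/2$. Using the duality $\thSob{-\sigma}=(\thSob{\sigma})^*$ recorded at \eqref{dual-Hspace} together with the density of $\mathscr{S}_0$ in $\thSob{\sigma}$ (Lemma \ref{lem:density-approx-S0}), proving the claim is equivalent to showing
\[
\Big|\int_{\mathbb{R}^d} fgh\myd{x}\Big|\le C\|f\|_{\tSob{\beta}}\|g\|_{\tSob{\beta}}\|h\|_{\thSob{\sigma}}\quad\text{for all }h\in\mathscr{S}_0.
\]
Set $p=2d/(d-2\sigma)$ (to be read as $2$ when $\sigma=0$) and let $p'=2d/(d+2\sigma)\in(1,2]$ be its Hölder conjugate. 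Lemma \ref{lem:embedding} gives $\|h\|_{p}\le C\|h\|_{\thSob{\sigma}}$, which is trivial when $\sigma=0$. Hölder then reduces the problem to
\[
\|fg\|_{p'}\le\|f\|_{2p'}\|g\|_{2p'}\le C\|f\|_{\tSob{\beta}}\|g\|_{\tSob{\beta}},
\]
where the last step will come from the Sobolev embedding $\tSob{\beta}\hookrightarrow L^{2p'}$.

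The main obstacle is verifying that $\tSob{\beta}\hookrightarrow L^{2p'}$ under the sharp hypothesis $\alpha+2\beta\ge d/2+1$. A direct computation shows $2p'=4d/(d+2\sigma)\in(2,4]$. When $\beta<d/2$, the embedding requires $2p'\le 2d/(d-2\beta)$, and this unwinds precisely to $4\beta+2\sigma\ge d$, i.e., $\alpha+2\beta\ge d/2+1$; so the hypothesis is used at the sharp level. The complementary ranges $\beta\ge d/2$ are handled by the embedding into all $L^q$ with $2\le q<\infty$ (resp.\ into $L^\infty$ when $\beta>d/2$) and are never binding. The edge cases $\sigma=0$ (where $p=p'=2$ and Sobolev is trivial) and the endpoint $\alpha+2\beta=d/2+1$ remain covered by the classical $\tSob{\beta}\hookrightarrow L^{2d/(d-2\beta)}$ embedding, so no refined (Lorentz) version is needed in this lemma.
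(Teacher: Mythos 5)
Your proposal is correct and follows essentially the same route as the paper: Case (1) is reduced to Lemma \ref{lem:commutator} with $\gamma=\beta$, $s=1-\alpha$ and monotonicity of inhomogeneous Sobolev norms, while Case (2) is handled by duality with $\thSob{\alpha-1}$, H\"older with exponents $r=2d/(d-2(\alpha-1))$ and $r'$, and the Sobolev embedding $\tSob{\beta}\hookrightarrow \Leb{2r'}$, whose validity is exactly where the hypothesis $\alpha+2\beta\ge d/2+1$ enters (your exponent $p$ coincides with the paper's $r$ and $2p'$ with its $2r'$). Your extra remarks on the edge cases $\sigma=0$, $\beta\ge d/2$, and the endpoint $\alpha+2\beta=d/2+1$ are accurate and only make explicit what the paper leaves implicit.
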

\begin{proof}
Suppose that  $\alpha <1$, $\beta >d/2$, and $\alpha +\beta \ge 1$. Then since  $0< 1-\alpha \le \beta$, it immediately follows from  Lemma \ref{lem:commutator} that
\[
\norm{fg}{\thSob{1-\alpha}}\leq C \norm{f}{\tSob{1-\alpha}}\norm{g}{\tSob{\beta}}\leq C \norm{f}{\tSob{\beta}}\norm{g}{\tSob{\beta}}.
\]

Suppose next that $1 \le \alpha < d/2 +1$ and $\alpha +2 \beta \ge d/2 +1$. Recall then that
$$	
\thSob{1-\alpha}=\left(\thSob{\alpha-1}\right)^* .
$$
Let $2 \le r<\infty$ be  defined by
$$
\frac{1}{r}= \frac{1}{2}-\frac{\alpha-1}{d}.
$$
Note that
\[
\beta>0 \quad\mbox{and}\quad \frac{1}{2 r'} = \frac{1}{2}-\frac{1}{2}\left(\frac{1}{2}-\frac{\alpha-1}{d} \right) \ge \frac{1}{2} -\frac{\beta}{d}  .
\]
Hence by  the Sobolev embedding theorem (see Lemma \ref{lem:embedding} and Proposition \ref{prop:Gagliardo} with $p_1 =2$ e.g.),  we have
\[
 \int_{\mathbb{R}^d} (fg) \phi \myd{x}
   \leq  \norm{f }{\NLeb{2r'}} \norm{g}{\NLeb{2r'}}\norm{\phi}{\NLeb{r}}
     \apprle  \norm{f}{\tSob{\beta}}  \norm{g}{\tSob{\beta}}  \norm{\phi}{\thSob{\alpha-1}}
\]
for any $\phi \in \thSob{\alpha-1}$. This completes the proof of Lemma \ref{lem:product2}.
\end{proof}

\section{Fractional Stokes equations}\label{sec:Stokes}
In this section, we consider the fractional Stokes equations in $\mathbb{R}^d , d \ge 2$:
\begin{equation}\label{frac-Stokes}
  \Lambda^{2\alpha}\boldu+\nabla {p_*} = \Div\boldF, \quad \Div\boldu=0 \quad\mbox{in}\,\, \mathbb{R}^d ,
\end{equation}
where $\alpha$ is a real number. Here $\boldF = [F^{jk}]_{1 \le j, k \le d}$ is a given matrix-valued function and   $\Div \boldF$ is the vector field whose $j$-th component is given by
\[
 (\Div \boldF)^j = \sum_{k=1}^d D_{k} F^{jk};
\]
hence
\[
\Div \Div \boldF = \sum_{j,k=1}^d D_{jk} F^{jk}
\]
and
\[
\widehat{\Div \Div \boldF}(\xi ) = (2 \pi i \xi  ) \otimes (2\pi i \xi ) : \widehat{\boldF}(\xi )  =  - 4 \pi^2 \sum_{j,k=1}^d  \xi_j  \xi_k  \widehat{F}^{jk}(\xi ) .
\]

This section consists of three subsections. In Subsection \ref{subsec:sol-Sob}, we prove   solvability results in Sobolev spaces for \eqref{frac-Stokes}. In Subsection \ref{subsec:sol-formula}, we derive a solution formula for the problem \eqref{frac-Stokes} that will be used in several places in the rest of the paper. Finally,    very weak solutions of \eqref{frac-Stokes} are studied  in Subsection \ref{subsec:very-weak}.

\subsection{Solutions in Sobolev spaces}\label{subsec:sol-Sob} The problem \eqref{frac-Stokes} is first solved in the   class $\mathscr{S}_0$ by decoupling  the fractional Stokes system    into two elliptic equations.

\begin{lem}\label{lemma:frac-Stokes-S and S'} Let $\alpha$ be any real number. Then for each $\boldF \in \mathscr{S}_0$, there exists a unique solution  $(\boldu,{p_*}) \in \mathscr{S}_0 \times \mathscr{S}_0$ of  \eqref{frac-Stokes}.
\end{lem}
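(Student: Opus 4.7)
\medskip
\noindent\textbf{Proof proposal.} The plan is to decouple the Stokes system in Fourier space. Applying $\Div$ formally to the first equation and using $\Div \boldu =0$ gives $\Delta p_* =\Div\Div\boldF$, so that on the Fourier side
\[
  \hat{p}_*(\xi)=-\frac{1}{4\pi^2 |\xi|^2}\,\widehat{\Div\Div \boldF}(\xi) = \frac{1}{|\xi|^2}\sum_{j,k=1}^d \xi_j \xi_k \,\widehat{F}^{jk}(\xi),\qquad \xi\neq 0.
\]
Once $p_*$ is defined, the first equation becomes an algebraic identity on the Fourier side determining
\[
  \hat{\boldu}(\xi)=\frac{1}{(2\pi|\xi|)^{2\alpha}}\bigl(\widehat{\Div\boldF}(\xi)-2\pi i\,\xi\,\hat{p}_*(\xi)\bigr),\qquad \xi\neq 0.
\]

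First I would verify that these Fourier-side formulas produce elements of $\mathscr{S}_0$. The key point is that for $\boldF\in\mathscr{S}_0$ the components $\widehat{F}^{jk}$ vanish to infinite order at the origin, so the singular symbols $|\xi|^{-2}$ and $|\xi|^{-2\alpha}$ are harmless: the products extend to $C^\infty$ functions that vanish to infinite order at $0$, and the Schwartz decay at infinity is preserved because the symbols have at most polynomial growth. A short computation using the Leibniz rule applied to $D^\gamma\bigl(|\xi|^{-2}\xi_j\xi_k \widehat{F}^{jk}(\xi)\bigr)$, combined with the fact that each derivative of $\widehat{F}^{jk}$ vanishes to infinite order at $0$, shows that $\hat{p}_*\in\mathscr{S}_0$; the same argument handles $\hat{\boldu}$. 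Hence $p_*=(\hat{p}_*)^\vee\in\mathscr{S}_0$ and $\boldu=(\hat{\boldu})^\vee\in\mathscr{S}_0$.

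Next I would check that $(\boldu,p_*)$ solves \eqref{frac-Stokes}. The divergence-free condition follows from
\[
  \xi\cdot\hat{\boldu}(\xi)=\frac{1}{(2\pi|\xi|)^{2\alpha}}\bigl(\xi\cdot\widehat{\Div\boldF}(\xi)-2\pi i\,|\xi|^2\,\hat{p}_*(\xi)\bigr)=0
\]
by the very choice of $\hat{p}_*$, and substituting back recovers the momentum equation on the Fourier side, hence pointwise. For uniqueness, if $(\boldu,p_*)\in\mathscr{S}_0\times\mathscr{S}_0$ solves the homogeneous system, the same manipulations give $|\xi|^2\hat{p}_*(\xi)=0$ and $(2\pi|\xi|)^{2\alpha}\hat{\boldu}(\xi)=0$ for $\xi\neq 0$; since $\hat{p}_*$ and $\hat{\boldu}$ are continuous, they vanish everywhere, whence $p_*=0$ and $\boldu=0$.

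The only real technical point is the verification that division by $|\xi|^2$ and $|\xi|^{2\alpha}$ stays within $\mathscr{S}_0$, i.e.\ that the quotients are genuinely Schwartz functions whose Fourier transforms still vanish to infinite order at $0$. This is where the restriction to $\mathscr{S}_0$ (rather than $\mathscr{S}$) is essential and where essentially all of the work lies; once this is in place, existence, uniqueness, and the divergence-free property follow from the explicit formulas.
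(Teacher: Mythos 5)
Your proposal is correct and follows essentially the same route as the paper: the paper also decouples the system into the pressure equation $\Delta p_* = \Div\Div\boldF$ and the equation $\Lambda^{2\alpha}\boldu = \Div(\boldF - p_*\mathbf{I})$, solving each by inverting $\Delta$ and $\Lambda^{2\alpha}$ on $\mathscr{S}_0$ and then recovering $\Div\boldu=0$ and uniqueness exactly as you do; your explicit Fourier-side formulas are just the concrete form of these inversions. The technical point you isolate (division by $|\xi|^2$ and $|\xi|^{2\alpha}$ preserving $\mathscr{S}_0$) is precisely the fact the paper invokes when asserting that $\Delta$ and $\Lambda^{2\alpha}$ are isomorphisms of $\mathscr{S}_0$.
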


\begin{proof} Suppose that  $\boldF \in \mathscr{S}_0 $. Then since $\Div \boldF \in  \mathscr{S}_0$ and $\Delta = - \Lambda^2 $ is an isomorphism from $\mathscr{S}_0$ onto itself,   there exists a unique  ${p_*} \in \mathscr{S}_0$ such that
\begin{equation}\label{Lapl-equation}
     \Delta {p_*} =\Div \Div \boldF \quad\mbox{in}\,\, \mathbb{R}^d .
\end{equation}
Let $\mathbf{I}$ denote the identity matrix. Then since $\Lambda^{2\alpha}: \mathscr{S}_0 \rightarrow \mathscr{S}_0$ is an isomorphism,  there exists a unique $\boldu \in \mathscr{S}_0$ such that
\begin{equation}\label{frac-Stokes-u only}
  \Lambda^{2\alpha}\boldu  = \Div (\boldF - {p_*} \mathbf{I} )  \quad\mbox{in}\,\, \mathbb{R}^d .
\end{equation}
Since $\Div ({p_*} \mathbf{I} ) =\nabla {p_*}$, we have
$$
\Lambda^{2\alpha}\boldu+\nabla {p_*} = \Div\boldF \quad\mbox{in}\,\, \mathbb{R}^d.
$$
 Moreover,
taking the divergence in (\ref{frac-Stokes-u only}), we have
\[ 
 \Lambda^{2\alpha} (\Div \boldu )   = \Div \Div \boldF -\Delta  {p_*}  =0   \quad\mbox{in}\,\, \mathbb{R}^d .
\] 
Since $\Div \boldu \in \mathscr{S}_0$ and $\Lambda^{2\alpha}: \mathscr{S}_0 \rightarrow \mathscr{S}_0$ is an isomorphism, it follows   that $\Div \boldu =0$ in $\mathbb{R}^d$. This proves existence of a solution in  $ \mathscr{S}_0 \times \mathscr{S}_0$ of  \eqref{frac-Stokes}.

To prove the uniqueness assertion, let    $(\boldu,{p_*}) \in \mathscr{S}_0 \times \mathscr{S}_0$ be  a solution of  \eqref{frac-Stokes} with the trivial data $\boldF= {\mathbf 0}$. Then since $\Div \boldu=0$ in $\mathbb{R}^d$ and $\Lambda^{2\alpha}\Div =\Div \Lambda^{2\alpha}$ in $\mathscr{S}_0$, we take  the divergence in (\ref{frac-Stokes}) to obtain
\[
   \Delta {p_*} = 0\quad\mbox{in}\,\, \mathbb{R}^d .
\]
Since $\Delta : \mathscr{S}_0 \rightarrow \mathscr{S}_0$ is an isomorphism, it follows that   ${p_*} =0$ in $\mathbb R^d$. Then since  $\Lambda^{2\alpha}\boldu  = \mathbf{0}$ in $\mathbb{R}^d$, we conclude that $\boldu  =\mathbf{0}$ in $\mathbb R^d$. This completes the proof of Lemma \ref{lemma:frac-Stokes-S and S'}.
\end{proof}

Using the decomposition of \eqref{frac-Stokes} into  \eqref{Lapl-equation} and \eqref{frac-Stokes-u only}, we   prove unique solvability of \eqref{frac-Stokes} in homogeneous Sobolev spaces of arbitrary orders.

\begin{thm}\label{thm:fractional-Stokes0}
Let $\alpha , s\in \mathbb{R}$. Then for each $\boldF \in \thSob{s}$, there exists a unique solution  $(\boldu,{p_*})$ in $\thSob{2\alpha+s-1}\times \thSob{s}$   of  \eqref{frac-Stokes}.
Moreover, we have
\[    \norm{\boldu}{\thSob{2\alpha+s-1}}+\norm{{p_*}}{\thSob{s}}\apprle \norm{\boldF}{\thSob{s}}. \]
\end{thm}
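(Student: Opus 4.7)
\smallskip

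\noindent\textbf{Proof proposal.} The plan is to leverage the decomposition into (\ref{Lapl-equation}) and (\ref{frac-Stokes-u only}) from the proof of Lemma \ref{lemma:frac-Stokes-S and S'}, first deriving the desired $\dot{H}$-estimate on the dense subset $\mathscr{S}_0$ via the Fourier transform, and then extending to arbitrary $\dot{H}^s$ data by density (Lemma \ref{lem:density-approx-S0}(3)).

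\emph{Step 1 (A priori estimate on $\mathscr{S}_0$).} Let $\boldF\in\mathscr{S}_0$ and let $(\boldu,p_*)\in\mathscr{S}_0\times\mathscr{S}_0$ be the unique solution granted by Lemma \ref{lemma:frac-Stokes-S and S'}. Applying the Fourier transform to (\ref{Lapl-equation}) yields, for $\xi\neq 0$,
\[
\widehat{p_*}(\xi)=\frac{1}{|\xi|^2}\sum_{j,k=1}^{d}\xi_j\xi_k\,\widehat{F^{jk}}(\xi),
\]
so that $|\widehat{p_*}(\xi)|\le d\,|\widehat{\boldF}(\xi)|$ pointwise, and hence $\|p_*\|_{\dot{H}^s}\lesssim \|\boldF\|_{\dot H^s}$. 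Next, from (\ref{frac-Stokes-u only}) the Fourier transform gives
\[
(2\pi|\xi|)^{2\alpha}\widehat{\boldu}(\xi)=2\pi i\,\widehat{(\boldF-p_*\mathbf I)}(\xi)\,\xi,
\]
which yields $(2\pi|\xi|)^{2\alpha-1}|\widehat{\boldu}(\xi)|\lesssim |\widehat{\boldF}(\xi)|+|\widehat{p_*}(\xi)|\lesssim |\widehat{\boldF}(\xi)|$. Multiplying by $(2\pi|\xi|)^s$ and applying Plancherel gives $\|\boldu\|_{\dot H^{2\alpha+s-1}}\lesssim \|\boldF\|_{\dot H^s}$.

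\emph{Step 2 (Existence by density).} Given $\boldF\in\dot{H}^s$, Lemma \ref{lem:density-approx-S0}(3) supplies a sequence $\{\boldF_k\}\subset\mathscr{S}_0$ converging to $\boldF$ in $\dot{H}^s$. Lemma \ref{lemma:frac-Stokes-S and S'} produces solutions $(\boldu_k,p_{*,k})\in\mathscr{S}_0\times\mathscr{S}_0$, and the estimate in Step 1 applied to the differences $\boldF_k-\boldF_\ell$ shows $\{(\boldu_k,p_{*,k})\}$ is Cauchy in $\dot H^{2\alpha+s-1}\times\dot H^s$; call the limit $(\boldu,p_*)$. Since $\Lambda^{2\alpha}:\dot H^{2\alpha+s-1}\to\dot H^{s-1}$ and $\nabla:\dot H^{s}\to\dot H^{s-1}$ are bounded (in fact $\Lambda^{2\alpha}$ is an isomorphism between those spaces), we may pass to the limit in (\ref{frac-Stokes}) inside $\dot H^{s-1}$ and inside $\dot H^{s-1}$ for the divergence-free condition, showing $(\boldu,p_*)$ solves \eqref{frac-Stokes}. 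The desired estimate passes to the limit as well.

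\emph{Step 3 (Uniqueness).} Suppose $(\boldu,p_*)\in\dot H^{2\alpha+s-1}\times\dot H^s$ solves the homogeneous problem $\boldF=\mathbf 0$. Taking the divergence of the momentum equation in $\mathscr{S}_0'$ and using $\Div\boldu=0$ gives $\Delta p_*=0$ in $\mathscr{S}_0'$. Since $\Delta:\dot H^s\to\dot H^{s-2}$ is an isomorphism, we conclude $p_*=0$. Then $\Lambda^{2\alpha}\boldu=\mathbf 0$ in $\dot H^{s-1}$, and since $\Lambda^{2\alpha}:\dot H^{2\alpha+s-1}\to\dot H^{s-1}$ is an isomorphism, $\boldu=\mathbf 0$. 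The main (and only) subtlety is ensuring that all manipulations—taking Fourier transforms, dividing by $(2\pi|\xi|)^{2\alpha}$, and the density/completion step—are justified at the level of $\mathscr{S}_0'$, which is exactly the setting in which homogeneous Sobolev spaces live and on which $\Lambda^t$ acts as an isomorphism between $\dot H^{s+t}$ and $\dot H^s$.
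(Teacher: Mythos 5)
Your proposal is correct and rests on exactly the same decomposition of \eqref{frac-Stokes} into the pressure equation \eqref{Lapl-equation} and the reduced equation \eqref{frac-Stokes-u only}, with a uniqueness argument identical to the paper's. The only difference is procedural: the paper solves the two decoupled equations directly for arbitrary $\boldF\in\thSob{s}$, using that $\Delta:\thSob{t+2}\to\thSob{t}$ and $\Lambda^{2\alpha}:\thSob{2\alpha+s-1}\to\thSob{s-1}$ are isomorphisms (and gets $\Div\boldu=0$ from $\Lambda^{2\alpha}(\Div\boldu)=0$ in $\thSob{s-2}$), whereas you first prove the estimate on $\mathscr{S}_0$ via Plancherel and then pass to general data by density --- a correct but unnecessary detour (and note the divergence-free condition passes to the limit in $\thSob{2\alpha+s-2}$, not $\thSob{s-1}$, though this changes nothing).
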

\begin{proof} Suppose that $\boldF \in \thSob{s}$. Then since $\Div \Div \boldF \in \thSob{s-2}$ and $\Delta$ is an isometric  isomorphism from $\thSob{t+2}$ to $\thSob{t}$ for any $t \in \mathbb{R}$, there exists a unique solution ${p_*} \in \thSob{s}$  of \eqref{Lapl-equation}, which satisfies
\[	\norm{{p_*}}{\thSob{s}}\apprle \norm{\boldF}{\thSob{s}}.\]
Moreover, since $\Lambda^{2\alpha}$ is an isomorphism from $\thSob{2\alpha+s-1}$ to $\thSob{s-1}$,  there exists a unique $\boldu\in \thSob{2\alpha+s-1}$ satisfying \eqref{frac-Stokes-u only}. Note also that
\[	\norm{\boldu}{\thSob{2\alpha+s-1}}\apprle \norm{\boldF}{\thSob{s}}+\norm{{p_*}}{\thSob{s}}\apprle \norm{\boldF}{\thSob{s}}.\]
Finally, since $\Div \boldu \in \thSob{2\alpha+s-2}$   and $ \Lambda^{2\alpha} (\Div \boldu )=0$ in $\thSob{s-2}$, it follows that $\Div \boldu=0$ in $\mathbb{R}^d$. This proves the existence assertion of the theorem.

To show the uniqueness, suppose that $(\boldu,{p_*})\in \thSob{2\alpha+s-1}\times \thSob{s}$ satisfies \eqref{frac-Stokes} with $\boldF=\mathbf{0}$. Since $\Div \boldu=0$ in $\mathbb{R}^d$ and $\Lambda^{2\alpha}\Div =\Div \Lambda^{2\alpha}$ in $\mathscr{S}_0'$, it follows from (\ref{frac-Stokes}) that $\Delta {p_*} =0$ in $\mathbb{R}^d$. Since $\Delta$ is an isomorphism, we then deduce  that ${p_*}=0$  in $\mathbb{R}^d$. From  \eqref{frac-Stokes} again, we have  $\Lambda^{2\alpha}\boldu=\mathbf{0}$   in $\mathbb{R}^d$. Since $\Lambda^{2\alpha}$ is an isomorphism, we then conclude that $\boldu=\mathbf{0}$   in $\mathbb{R}^d$.
 This completes the proof of Theorem \ref{thm:fractional-Stokes0}.
\end{proof}

\begin{thm}\label{thm:fractional-Stokes}
Let $\alpha $ be any real number. Then for each $\boldF \in \thSob{1-\alpha}$, there exists a unique solution $(\boldu,{p_*})$ in $\thSob{\alpha}\times \thSob{1-\alpha}$   of  \eqref{frac-Stokes}.
Moreover, we have
\[    \norm{\boldu}{\thSob{\alpha}}+\norm{{p_*}}{\thSob{1-\alpha}}\apprle \norm{\boldF}{\thSob{1-\alpha}}. \]
In addition,   $\boldu$   satisfies the energy identity
\begin{equation}\label{eq:divF-identity}
   \int_{\mathbb{R}^d} |\Lambda^{\alpha} \boldu|^2 \myd{x}=   \action{ \Div \boldF,  \boldu} .
\end{equation}
\end{thm}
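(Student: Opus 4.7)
The existence, uniqueness, and estimate portions of this theorem are simply the case $s = 1-\alpha$ of Theorem~\ref{thm:fractional-Stokes0}, because $2\alpha + (1-\alpha) - 1 = \alpha$. My plan is therefore to invoke that result directly and concentrate all the effort on the energy identity \eqref{eq:divF-identity}, which is the genuinely new content.

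The plan for \eqref{eq:divF-identity} is a standard density-and-pass-to-limit argument. Heuristically, \eqref{eq:divF-identity} is what one obtains by testing the momentum equation against $\boldu$ itself: the dissipative term produces $\int |\Lambda^\alpha \boldu|^2\myd{x}$ by Plancherel, the right-hand side produces $\action{\Div \boldF, \boldu}$, and the pressure contribution $\action{\nabla p_*, \boldu}$ vanishes thanks to $\Div \boldu = 0$. Since $\boldu$ is only in $\thSob{\alpha}$ and all three pairings live in the duality between $\thSob{-\alpha}$ and $\thSob{\alpha}$, these manipulations are not a priori legitimate and must first be carried out on smooth approximants.

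Concretely, I would pick a sequence $\{\boldF_k\} \subset \mathscr{S}_0$ with $\boldF_k \to \boldF$ in $\thSob{1-\alpha}$, available from Lemma~\ref{lem:density-approx-S0}(3), and for each $k$ obtain from Lemma~\ref{lemma:frac-Stokes-S and S'} a unique solution $(\boldu_k, p_{*,k}) \in \mathscr{S}_0 \times \mathscr{S}_0$ of \eqref{frac-Stokes} with data $\boldF_k$. Applying the linear estimate of Theorem~\ref{thm:fractional-Stokes0} to the differences $(\boldu_k - \boldu_l, p_{*,k} - p_{*,l})$ shows that $\{(\boldu_k, p_{*,k})\}$ is Cauchy in $\thSob{\alpha} \times \thSob{1-\alpha}$, and uniqueness forces the limit to be $(\boldu, p_*)$. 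Because every object is Schwartz, multiplying $\Lambda^{2\alpha}\boldu_k + \nabla p_{*,k} = \Div \boldF_k$ by $\boldu_k$ and integrating over $\mathbb{R}^d$ is perfectly rigorous: Plancherel handles the dissipative term, classical integration by parts together with $\Div \boldu_k = 0$ kills the pressure term, and one arrives at
\begin{equation*}
\int_{\mathbb{R}^d} |\Lambda^\alpha \boldu_k|^2 \myd{x} = \int_{\mathbb{R}^d} \Div \boldF_k \cdot \boldu_k \myd{x} = \action{\Div \boldF_k, \boldu_k}.
\end{equation*}

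Passing to the limit then finishes the job: the left side tends to $\|\Lambda^\alpha \boldu\|_{\NLeb{2}}^2$ since $\Lambda^\alpha : \thSob{\alpha} \to \Leb{2}$ is an isometry, while on the right, boundedness of $\Div : \thSob{1-\alpha} \to \thSob{-\alpha}$ combined with $\boldu_k \to \boldu$ in $\thSob{\alpha}$ and the duality bound \eqref{dual-Hspace} gives $\action{\Div \boldF_k, \boldu_k} \to \action{\Div \boldF, \boldu}$. I do not anticipate any genuine obstacle here; the one point that deserves a brief word is confirming that the classical integral pairing for $\mathscr{S}_0$ objects coincides in the limit with the $\thSob{-\alpha}$--$\thSob{\alpha}$ duality pairing appearing in \eqref{eq:divF-identity}, which is a direct consequence of the density assertions in Lemma~\ref{lem:density-approx-S0}. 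The whole proof thus reduces to a short corollary of the machinery already in place.
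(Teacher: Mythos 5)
Your argument is correct, but it follows a different route from the paper's. The paper keeps the solution $(\boldu,p_*)$ of Theorem \ref{thm:fractional-Stokes0} fixed, observes that the weak formulation
$\int_{\mathbb{R}^d}\Lambda^\alpha\boldu\cdot\Lambda^\alpha\Phi\,dx-\action{p_*,\Div\Phi}=\action{\Div\boldF,\Phi}$ holds for every $\Phi\in\mathscr{S}_0$, and then approximates $\boldu$ itself in $\thSob{\alpha}$ by a sequence $\boldu_k\in\mathscr{S}_0$, passing to the limit \emph{in the test-function slot} with the duality bound \eqref{dual-Hspace}; the pressure contribution survives as $\action{p_*,\Div\boldu}$ and is killed only at the end by $\Div\boldu=0$. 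You instead regularize the \emph{data}: approximate $\boldF$ by $\boldF_k\in\mathscr{S}_0$, solve via Lemma \ref{lemma:frac-Stokes-S and S'} to get Schwartz solutions for which the identity is classical, and transfer it to $(\boldu,p_*)$ using the a priori estimate of Theorem \ref{thm:fractional-Stokes0} for the differences together with uniqueness to identify the limit. Your route requires the extra (easy) step of checking that the limit of $(\boldu_k,p_{*,k})$ is indeed $(\boldu,p_*)$, but it has the advantage that every approximate velocity is exactly divergence-free, so the pressure term vanishes identically at each stage rather than only in the limit; the paper's route is slightly shorter, needing a single density approximation and no re-solving of the system. Incidentally, your strategy is essentially the one the paper itself uses later for the energy identity in Theorem \ref{theorem:Frational Stokes-Lpdata}, so it fits naturally into the framework already in place.
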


\begin{proof}
By Theorem \ref{thm:fractional-Stokes0}, it remains to show that the solution   $(\boldu,{p_*})$ in $\thSob{\alpha}\times \thSob{1-\alpha}$  of \eqref{frac-Stokes} satisfies the energy identity \eqref{eq:divF-identity}.
Note first that
\[    \int_{\mathbb{R}^d} \Lambda^\alpha \boldu \cdot \Lambda^\alpha \Phi \myd{x}-\action{{p_*},\Div \Phi}=\action{\Lambda^{2\alpha}\boldu, \Phi}+\action{\nabla {p_*},\Phi}=   \action{ \Div \boldF,   \Phi}   \]
for all $\Phi \in \mathscr{S}_0$.
By Lemma \ref{lem:density-approx-S0}, there exists  a sequence $\{\boldu_k\}$ in $\mathscr{S}_0$ such that $\norm{\boldu_k-\boldu}{\thSob{\alpha}}\rightarrow 0$. For each $k$, we have
\[    \int_{\mathbb{R}^d} \Lambda^\alpha \boldu \cdot \Lambda^\alpha  \boldu_k \myd{x}-\action{{p_*},\Div  \boldu_k}=   \action{ \Div \boldF,  \boldu_k} .  \]
Hence by the duality inequality (\ref{dual-Hspace}), $\boldu$ satisfies
\[    \int_{\mathbb{R}^d} |\Lambda^\alpha \boldu|^2 \myd{x}-\action{{p_*},\Div \boldu}=  \action{ \Div \boldF,  \boldu}. \]
Since $\Div \boldu=0$, we get the energy identity \eqref{eq:divF-identity}.
 This completes the proof of Theorem \ref{thm:fractional-Stokes}.
\end{proof}

\subsection{A solution formula}\label{subsec:sol-formula}

Given  $\boldF = [F^{jk}]_{1 \le j, k \le d} \in \mathscr{S}_0$, let $(\boldu ,{p_*}) \in \mathscr{S}_0 \times \mathscr{S}_0$ be the solution of (\ref{frac-Stokes}). First, taking the Fourier transform in (\ref{Lapl-equation}), we have
\[
 -(2\pi   |\xi|)^{2  } \widehat{{p_*}}(\xi ) = (2 \pi i \xi  ) \otimes (2\pi i \xi ) : \widehat{\boldF}(\xi ) \quad\mbox{for all } \xi \in {\mathbb R}^d
\]
or equivalently
\[
  \widehat{{p_*}}(\xi ) =  \frac{ \xi  \otimes  \xi}{ |\xi|^{2}}  : \widehat{\boldF}(\xi ) \quad\mbox{for all } \xi \neq 0 .
\]
Next, taking the Fourier transform in (\ref{frac-Stokes-u only}), we have
\begin{align*}
(2\pi |\xi|)^{2 \alpha } \widehat{u}^j(\xi ) &= \sum_{l=1}^d (2 \pi i \xi_l )  \widehat{F}^{jl}(\xi)  - (2 \pi i \xi_j )   \widehat{{p_*}}(\xi )  \\
&= \sum_{k,l=1}^d (2 \pi i \xi_l ) \left( \delta_{jk} - \frac{\xi_j  \xi_k}{|\xi|^2} \right)  \widehat{F}^{kl}(\xi)
\end{align*}
and so
\[
  \widehat{u}^j (\xi ) =  \sum_{k,l=1}^d \frac{(2 \pi i \xi_l )}{(2\pi |\xi|)^{2 \alpha }} \left( \delta_{jk} - \frac{\xi_j  \xi_k}{|\xi|^2} \right)  \widehat{F}^{kl}(\xi)
\]
for $j =1 , \ldots , d$.

Assuming  that $   2\alpha <  d+1 $, let $U_{\alpha,j}^{kl} \in \mathscr{S}'$ be the inverse Fourier transform of the locally integrable function
\[
\frac{(2 \pi i \xi_l )}{(2\pi |\xi|)^{2 \alpha }} \left( \delta_{jk} - \frac{\xi_j  \xi_k}{|\xi|^2} \right)
\]
in $\mathscr{S}'$.  The following lemma will be proved later in Appendix \ref{sec:Gaussian}.

\begin{lem}\label{prop:Fourier-transform-Gaussian-derivatives}
Suppose that  $j, k, l   \in \{1 , ..., d \}$, $\lambda \in \mathbb C$, and $\psi \in \mathscr{S}$. Let $\Gamma$ be the Gamma function.

\begin{enumerate}[label=\textnormal{(\arabic*)}]
\item  If $1<\Re \lambda<d+1$, then
\[
\int_{\mathbb{R}^d} \frac{-2 \pi i \xi_j}{|\xi|^{\lambda}}  {\psi}^{\vee}(\xi) \, d\xi
   =\frac{2\Gamma((d+2-\lambda)/2)  }{  \Gamma( \lambda/2)\pi^{d/2- \lambda }  } \int_{\mathbb{R}^d}  \frac{x_j}{|x|^{d+2-\lambda }} \psi(x) \, dx.
\]
\item  If $2<\Re \lambda<d+2$, then
\begin{align*}
& \int_{\mathbb{R}^d}   \frac{(-2 \pi i \xi_j )(-2 \pi i \xi_k )}{|\xi|^\lambda}  {\psi}^{\vee}(\xi) \, d\xi \\
&\quad  = \frac{2\Gamma((d+2-\lambda)/2)  }{  \Gamma( \lambda/2) \pi^{d/2-\lambda}}   \int_{\mathbb{R}^d}   \frac{(d+2-\lambda )x_j x_k -\delta_{jk} |x|^2}{|x|^{d+4-\lambda }}   {\psi}(x) \, dx  .
\end{align*}
\item  If $3<\Re \lambda<d+3$, then
\begin{align*}
 & \int_{\mathbb{R}^d}   \frac{(-2\pi i \xi_j)(-2\pi i \xi_k)(-2\pi i \xi_l)}{|\xi|^\lambda} {\psi}^{\vee}(\xi) \, d\xi \\
  & \quad  = \frac{4\Gamma((d+4-\lambda)/2)  }{ \Gamma( \lambda/2) \pi^{d/2-\lambda}}   \int_{\mathbb{R}^d} \left[- D_l \left( \frac{   x_j x_k}{|x|^{d+4-\lambda }} \right) - \frac{\delta_{jk}   x_l }{|x|^{d+4-\lambda }} \right]   {\psi}(x)\myd{x} .
\end{align*}
\end{enumerate}
\end{lem}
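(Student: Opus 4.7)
The plan is to derive all three identities from a single base formula for $|\xi|^{-\lambda}$:
\begin{equation*}
\int_{\mathbb{R}^d} \frac{\psi^{\vee}(\xi)}{|\xi|^{\lambda}} \myd{\xi} = \frac{\Gamma((d-\lambda)/2)}{\Gamma(\lambda/2) \pi^{d/2-\lambda}} \int_{\mathbb{R}^d} \frac{\psi(x)}{|x|^{d-\lambda}} \myd{x},
\end{equation*}
valid for $0 < \Re\lambda < d$. This base identity follows from the $\Gamma$-integral representation $|\xi|^{-\lambda} = \pi^{\lambda/2}\Gamma(\lambda/2)^{-1} \int_0^\infty t^{\lambda/2 - 1} e^{-\pi t |\xi|^2} \myd{t}$, the fact that the Gaussian $e^{-\pi t |\xi|^2}$ has inverse Fourier transform $t^{-d/2} e^{-\pi |x|^2 / t}$, Fubini's theorem, a second $\Gamma$-integral via the substitution $s = 1/t$, and the Parseval-type identity $\int f \cdot g^\vee \myd{\xi} = \int f^\vee \cdot g \myd{x}$.

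Next, for Part (1), using $(-2\pi i \xi_j) \psi^\vee(\xi) = (D_j \psi)^\vee(\xi)$, I would apply the base identity with $D_j \psi$ in place of $\psi$ (valid initially for $0 < \Re\lambda < d$) and then integrate by parts on the spatial side, using $D_j(|x|^{-(d-\lambda)}) = -(d-\lambda) x_j / |x|^{d+2-\lambda}$ together with the Gamma identity $(d-\lambda)\Gamma((d-\lambda)/2) = 2\Gamma((d+2-\lambda)/2)$. This yields Part (1) for $1 < \Re\lambda < d$. Parts (2) and (3) are derived analogously by iterating $(-2\pi i \xi_j)(-2\pi i \xi_k)\psi^\vee = (D_j D_k \psi)^\vee$ and the three-factor analogue, followed by two or three integrations by parts, combined with the direct computations
\begin{align*}
D_j D_k(|x|^{-(d-\lambda)}) &= (d-\lambda) \frac{(d+2-\lambda) x_j x_k - \delta_{jk}|x|^2}{|x|^{d+4-\lambda}}, \\
D_l D_j D_k(|x|^{-(d-\lambda)}) &= (d-\lambda)(d+2-\lambda) \left[ D_l\!\left(\frac{x_j x_k}{|x|^{d+4-\lambda}}\right) + \delta_{jk} \frac{x_l}{|x|^{d+4-\lambda}}\right],
\end{align*}
and the repeated Gamma identity $z \Gamma(z) = \Gamma(z+1)$, which matches the prefactors $2\Gamma((d+2-\lambda)/2)$ and $4\Gamma((d+4-\lambda)/2)$. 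This produces Parts (2) and (3) for $2 < \Re\lambda < d$ and $3 < \Re\lambda < d$, respectively.

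Finally, to extend each identity to its full target strip, I would invoke analytic continuation. Both sides of each identity are holomorphic functions of $\lambda$ on the corresponding full strip: on the spectral side the integrand has a singularity of order $|\xi|^{k-\Re\lambda}$ at the origin (with $k$ the number of $\xi$-factors), locally integrable precisely when $\Re\lambda < d+k$, while the spatial side involves a density of size $|x|^{-(d+2k-\Re\lambda)}$ near the origin, integrable precisely when $\Re\lambda > k$; holomorphy follows from Morera's theorem together with dominated convergence. The main technical obstacle will be rigorously justifying the integration by parts for small $\Re\lambda$, in particular controlling boundary contributions near the origin by a cutoff-and-limit argument, and carefully verifying the holomorphy of both sides on the full stated strips; once this is in place, the identity theorem for holomorphic functions propagates each formula from the sub-strip where it has already been established to the full target strip.
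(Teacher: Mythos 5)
Your overall strategy --- the base identity for $|\xi|^{-\lambda}$, transferring the $\xi$-factors onto derivatives of $\psi$, integrating by parts against $|x|^{\lambda-d}$, and then continuing analytically in $\lambda$ --- is the same as the paper's, and your derivative computations and Gamma-factor bookkeeping are correct. For Part (1) your order of operations is fine: the identity is first obtained on $1<\Re\lambda<d$, which is nonempty for every $d\ge 2$, both sides are holomorphic on $1<\Re\lambda<d+1$, and the continuation goes through.

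The gap is in Parts (2) and (3). Because you perform \emph{all} the integrations by parts inside the validity strip $0<\Re\lambda<d$ of the base identity, your seed regions are $2<\Re\lambda<d$ and $3<\Re\lambda<d$. These are empty when $d=2$ (Part (2)) and when $d\in\{2,3\}$ (Part (3)); for $\Re\lambda\le k$ the target spatial integral is not even absolutely convergent, so no cutoff-and-limit argument can enlarge the overlap. Thus in exactly the dimensions the paper needs most --- the lemma is applied with $\lambda=2\alpha+2\in(3,d+3)$ to compute the Stokes kernel for all $d\ge2$, in particular $d=2,3$ --- there is no nonempty open set on which the identity is known, and the identity theorem has nothing to propagate. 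The repair is to stage the argument, as the paper does: prove (1) on its full strip $1<\Re\lambda<d+1$; apply (1) with $D_k\psi$ in place of $\psi$, note both sides of the resulting identity are holomorphic on $1<\Re\lambda<d+2$ and continue, perform the single remaining integration by parts only where $\Re\lambda$ is large (e.g.\ $d+1<\Re\lambda<d+2$, and $d+1>2$), then continue once more to $2<\Re\lambda<d+2$; iterate the same scheme for (3). Every seed strip is then nonempty for all $d\ge2$. (A minor side remark: near the origin the spatial densities have size $|x|^{k-(d+2k-\Re\lambda)}$ rather than $|x|^{-(d+2k-\Re\lambda)}$; your stated integrability threshold $\Re\lambda>k$ is nonetheless the correct one.)
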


Assume that $1< 2\alpha <  d+1  $.
Then using Lemma \ref{prop:Fourier-transform-Gaussian-derivatives}, we can find  $U_{\alpha,j}^{kl}$ explicitly. First of all, it follows from  Lemma \ref{prop:Fourier-transform-Gaussian-derivatives} (1) that the inverse Fourier transform of $(2 \pi i \xi_l )(2\pi |\xi|)^{-2 \alpha }$ is
\[
   - \frac{2 \Gamma(1+d/2-\alpha) }{2^{2 \alpha} \pi^{d/2}   \Gamma( \alpha) } \frac{x_l}{|x|^{d+2-2\alpha}} .
\]
Next, by Lemma \ref{prop:Fourier-transform-Gaussian-derivatives} (3) with $\lambda =2\alpha +2$, we have
\begin{align*}
 & \int_{\mathbb{R}^d}   \frac{2\pi i \xi_j  \xi_k  \xi_l }{|\xi|^{2\alpha+2} } {\psi}^{\vee}(\xi) \, d\xi \\
  & \quad  = \frac{ \Gamma(1+d/2 -\alpha)  }{  \Gamma(1+\alpha) \pi^{d/2-2\alpha}}   \int_{\mathbb{R}^d} \left[- D_l \left( \frac{   x_j x_k}{|x|^{d+2-2\alpha }} \right) - \frac{\delta_{jk}   x_l }{|x|^{d+2-2\alpha }} \right]   {\psi}(x)\myd{x}
\end{align*}
for all $\psi \in \mathscr{S}$. Hence the inverse Fourier transform of $(2 \pi i \xi_l \xi_j \xi_k )(2\pi |\xi|)^{-2 \alpha }|\xi|^{-2}$ is
\[
  \frac{ \Gamma(1+d/2-\alpha) }{2^{2 \alpha} \pi^{d/2}  \Gamma (1+ \alpha)} \left[ - D_l \left( \frac{   x_j x_k}{|x|^{d+2-2\alpha }} \right) -  \frac{\delta_{jk}   x_l }{|x|^{d+2-2\alpha }} \right].
\]
Therefore,
\begin{equation}\label{eq:Stokes-Green}
U_{\alpha,j}^{kl}(x) = \frac{  \Gamma(1+d/2-\alpha) }{2^{2 \alpha} \pi^{d/2}\Gamma(1+ \alpha) }\left[   D_l \left( \frac{   x_j x_k}{|x|^{d+2-2\alpha }} \right) - \frac{\delta_{jk}(   2\alpha -1 )  x_l }{|x|^{d+2-2\alpha }} \right] .
\end{equation}
Note that
$$
U_{\alpha ,j}^{kl} \in \Lor{d/(d+1-2\alpha)}{\infty} .
$$
Now, setting   $\mathbf{U}_{\alpha} = [U_{\alpha, j}^{kl}]_{1 \le j,k,l \le d}$, we have
\[
\boldu = \mathbf{U}_{\alpha} * \boldF
\]
or   componentwise
\[
u^j  = \sum_{k,l=1}^d U_{\alpha , j}^{kl} * F^{kl} \quad (j=1,\ldots , d) .
\]

\subsection{Very weak solutions}\label{subsec:very-weak}

The notion of  very weak solutions is introduced to solve  the fractional Stokes system \eqref{frac-Stokes} for $\boldF \in \Leb{1}$. Recall that if $s \ge 0$, then  $\Lambda^s$ is bounded from $\mathscr{S}$ to $\Leb{1}\cap \Leb{\infty}$.

\begin{defn} Let $\alpha \ge 0$. Suppose that  $\boldF$ is a distribution. Then we say that $\boldu \in \Leb{1}+\Leb{\infty}$ is a \emph{very weak solution} of \eqref{frac-Stokes} if
\begin{equation}\label{eq:very-weaksol}
   \int_{\mathbb{R}^d} \boldu \cdot \left(  \Lambda^{2\alpha} \Phi +\nabla \psi \right)  dx =- \action{ \boldF ,\nabla \Phi}
\end{equation}
for all $\Phi \in C_{c,\sigma}^\infty$ and $\psi\in C_c^\infty$.
\end{defn}
Indeed, taking $\Phi=0$ in (\ref{eq:very-weaksol}), we deduce that if $\boldu$ is a very weak solution of \eqref{frac-Stokes}, then it is divergence-free:
\[
\int_{\mathbb{R}^d} \boldu \cdot  \nabla \psi   \myd{x}=0 \quad\mbox{for all}\,\, \psi\in C_c^\infty .
\]

Existence of a pressure associated with a very weak solution of \eqref{frac-Stokes} is an easy  consequence of the following more or less standard result, which will be proved later in Appendix \ref{sec:deRham}.

\begin{lem}\label{lem:de-Rham}
If $\boldu \in \mathscr{S}'$   satisfies  $\action{\boldu,\Phi}=0$ for all $\Phi \in C_{c,\sigma}^\infty$, then there exists    $p \in \mathscr{S}'$ such that $\boldu=\nabla p$.
\end{lem}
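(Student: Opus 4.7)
The plan is to first construct a tempered distribution $p_0 \in \mathscr{S}_0'$ such that $\boldu = \nabla p_0$ as elements of $(\mathscr{S}_0')^d$, and then to lift this identity to $\mathscr{S}'$ via Lemma~\ref{lem:polynomial-isomorphism} together with the automatic curl-free property of $\boldu$. For the latter, testing the hypothesis against $\Phi = (\partial_j \psi)\, e_i - (\partial_i \psi)\, e_j \in C_{c,\sigma}^\infty$, with $\psi \in C_c^\infty$ and $i \neq j$, immediately yields $\partial_j u_i = \partial_i u_j$ in $\mathscr{S}'$; that is, $\boldu$ is curl-free in the distributional sense.

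The key intermediate step is to establish $\action{ \boldu, \Phi } = 0$ for every $\Phi \in \mathscr{S}_0^d$ with $\Div \Phi = 0$. To this end, I would introduce the antisymmetric vector potential $A = [A_{ij}]$ defined on the Fourier side by
\[
\widehat{A}_{ij}(\xi) := \frac{\xi_j\, \widehat{\Phi}_i(\xi) - \xi_i\, \widehat{\Phi}_j(\xi)}{(2\pi i)\,|\xi|^2}.
\]
Since $\widehat{\Phi}$ vanishes to infinite order at the origin, so does $\widehat{A}_{ij}$, whence $A_{ij} \in \mathscr{S}_0$; a direct Fourier computation using $\xi \cdot \widehat{\Phi} = 0$ gives $\Phi_i = \sum_{j} \partial_j A_{ij}$. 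Fixing a cut-off $\chi \in C_c^\infty$ with $\chi \equiv 1$ near the origin and setting $A^R_{ij}(x) := \chi(x/R)\, A_{ij}(x)$, the field $A^R$ is antisymmetric and compactly supported, so $\Phi^R_i := \sum_j \partial_j A^R_{ij}$ lies in $C_{c,\sigma}^\infty$ (the divergence-freeness following from $\sum_{i,j} \partial_i \partial_j A^R_{ij} = 0$ by antisymmetry). A standard truncation argument yields $\Phi^R \to \Phi$ in $\mathscr{S}$, and continuity of $\boldu \in \mathscr{S}'$ then gives $\action{ \boldu, \Phi } = 0$.

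Next, I would construct $p_0 \in \mathscr{S}_0'$. Because $\Delta : \mathscr{S}_0 \to \mathscr{S}_0$ is an isomorphism, every $\Psi \in \mathscr{S}_0^d$ admits the Leray decomposition $\Psi = (\Psi - \nabla q) + \nabla q$ with $q := \Delta^{-1} \Div \Psi \in \mathscr{S}_0$ and $\Psi - \nabla q \in \mathscr{S}_0^d$ divergence-free. Pairing $\boldu$ with this decomposition and invoking the vanishing on the divergence-free part gives $\action{ \boldu, \Psi } = -\action{ \Div \boldu,\, \Delta^{-1} \Div \Psi }$. I would accordingly define
\[
\action{ p_0, \varphi } := \action{ \Div \boldu,\, \Delta^{-1} \varphi } \quad \text{for } \varphi \in \mathscr{S}_0,
\]
which yields $p_0 \in \mathscr{S}_0'$ satisfying $\action{ \nabla p_0, \Psi } = -\action{ p_0, \Div \Psi } = \action{ \boldu, \Psi }$ for every $\Psi \in \mathscr{S}_0^d$.

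Finally, extend $p_0$ to $\tilde p \in \mathscr{S}'$ by the Hahn--Banach theorem. Since $\Div \Psi \in \mathscr{S}_0$ whenever $\Psi \in \mathscr{S}_0^d$, the distribution $\boldu - \nabla \tilde p$ annihilates $\mathscr{S}_0^d$, so by Lemma~\ref{lem:polynomial-isomorphism} each of its components is a polynomial. The resulting vector-valued polynomial is curl-free (as the difference of two curl-free objects), and hence by the polynomial Poincar\'e lemma it equals $\nabla Q$ for some scalar polynomial $Q$. Setting $p := \tilde p + Q \in \mathscr{S}'$ gives $\nabla p = \boldu$, as desired. The principal obstacle is the approximation step within $\mathscr{S}_0^d$ carried out in the second paragraph; once this is settled, the passage from $\mathscr{S}_0'$ back to $\mathscr{S}'$ is delivered cleanly by the polynomial correction made possible by the curl-freeness of $\boldu$.
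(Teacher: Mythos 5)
Your proposal is correct, but it takes a genuinely different route from the paper. The paper's proof (Appendix C) constructs an explicit continuous right inverse of the divergence on the Schwartz class: following Wang, Lemmas \ref{lem:schwartz-integral} and \ref{lem:representation} build a continuous linear operator $\mathcal{B}:\mathscr{S}\to\mathscr{S}(\mathbb{R}^d;\mathbb{R}^d)$ with $\Div\mathcal{B}(g)=g-\phi\int g\,dx$, by iterated one-dimensional integrations against cut-off profiles; after upgrading the hypothesis (by density) to all divergence-free $\Phi\in\mathscr{S}$, the pressure is defined in one stroke by $\action{p,g}:=-\action{\boldu,\mathcal{B}(g)}$, and temperedness of $p$ is immediate from the continuity of $\mathcal{B}$ --- no Hahn--Banach extension and no polynomial correction are needed. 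You instead work on the Fourier side inside $\mathscr{S}_0$: you reduce to divergence-free $\mathscr{S}_0$-fields via an antisymmetric vector potential $\widehat{A}_{ij}=(\xi_j\widehat{\Phi}_i-\xi_i\widehat{\Phi}_j)/(2\pi i|\xi|^2)$ plus a dilation cut-off (the claim $A_{ij}\in\mathscr{S}_0$ rests on the standard fact that dividing a Schwartz function vanishing to infinite order at the origin by $|\xi|^2$ preserves these properties, the same fact underlying the paper's use of $\Lambda^{s}$ on $\mathscr{S}_0$ for negative $s$, so this is legitimate), then use the $\mathscr{S}_0$-Leray decomposition and $\Delta^{-1}$ to produce $p_0\in\mathscr{S}_0'$, extend by Hahn--Banach, and remove the ambiguity --- a polynomial by Lemma \ref{lem:polynomial-isomorphism} --- using the curl-freeness of $\boldu$ and the polynomial Poincar\'e lemma. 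What each approach buys: yours avoids constructing $\mathcal{B}$ altogether and only needs the density of $C_{c,\sigma}^\infty$ in divergence-free $\mathscr{S}_0$-fields (where the potential trick is clean), at the cost of a non-constructive extension step and the extra polynomial bookkeeping; the paper's argument is more constructive, delivers $p\in\mathscr{S}'$ directly with an explicit formula, and the operator $\mathcal{B}$ handles general divergence-free Schwartz test fields (not just those in $\mathscr{S}_0$), which is exactly the point where a naive Fourier-side potential would be singular at $\xi=0$.
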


\begin{lem}\label{lem:existence of p}
Let $\alpha \geq 0$. If $\boldu$ is a very weak solution of \eqref{frac-Stokes} with $\boldF \in \mathscr{S}'$, then
there exists ${p_*}\in \mathscr{S}'$ such that
\[
  \int_{\mathbb{R}^d} \boldu \cdot \left(  \Lambda^{2\alpha} \Phi +\nabla \psi \right) dx  -\action{{p_*},\Div \Phi}=- \action{ \boldF ,\nabla \Phi}
\]
for all $( \Phi , \psi ) \in \mathscr{S}$.
\end{lem}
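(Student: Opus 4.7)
The plan is to produce $p_*$ via an application of the de Rham-type Lemma \ref{lem:de-Rham}. The key observation is that after using the divergence-free condition on $\boldu$, the target identity is precisely the statement that a certain tempered distribution equals $-\nabla p_*$.

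First I would define a vector-valued tempered distribution $\mathbf{T}$ by
\[
   \action{\mathbf{T},\Phi}:= \int_{\mathbb{R}^d} \boldu \cdot \Lambda^{2\alpha}\Phi \myd{x} + \action{\boldF,\nabla \Phi}
\]
for vector-valued $\Phi \in \mathscr{S}$. Each term is a continuous linear functional on $\mathscr{S}$: the second is so because $\boldF \in \mathscr{S}'$ and $\nabla$ is continuous on $\mathscr{S}$, while the first is controlled because $\boldu \in \Leb{1}+\Leb{\infty}$ pairs continuously with $\Leb{1}\cap \Leb{\infty}$, into which $\Lambda^{2\alpha}$ maps $\mathscr{S}$ continuously (for $\alpha>0$ this is built into the discussion at the end of Subsection 2.2, where $\Lambda^{s}$ is shown to map $\mathscr{S}$ into $\mathscr{S}_{s}\subset \Leb{1}\cap \Leb{\infty}$, and for $\alpha=0$ it is trivial). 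Hence $\mathbf{T}\in (\mathscr{S}')^d$.

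Next, taking $\psi=0$ in the defining identity \eqref{eq:very-weaksol} of a very weak solution shows that $\mathbf{T}$ annihilates $C_{c,\sigma}^\infty$. Applying Lemma \ref{lem:de-Rham} to $-\mathbf{T}$ then produces ${p_*}\in \mathscr{S}'$ such that $\mathbf{T}=-\nabla {p_*}$ in $\mathscr{S}'$; equivalently,
\[
   \action{\mathbf{T},\Phi}=\action{{p_*},\Div \Phi} \quad\text{for all } \Phi \in \mathscr{S}.
\]
To finish, I would upgrade the divergence-free condition on $\boldu$ from $C_c^\infty$ to $\mathscr{S}$. Taking $\Phi=0$ in \eqref{eq:very-weaksol} yields $\int \boldu \cdot \nabla \psi \myd{x}=0$ for all $\psi \in C_c^\infty$, and density of $C_c^\infty$ in $\mathscr{S}$ (Lemma \ref{lem:density-approx-S0}(1)) together with the continuity of the pairing $\boldu \in \Leb{1}+\Leb{\infty}$ with $\mathscr{S}\hookrightarrow \Leb{1}\cap \Leb{\infty}$ extends this to all $\psi \in \mathscr{S}$. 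Combining with the identity above, for any $(\Phi,\psi)\in \mathscr{S}$,
\[
  \int_{\mathbb{R}^d} \boldu \cdot (\Lambda^{2\alpha}\Phi + \nabla \psi)\myd{x} - \action{{p_*},\Div \Phi} = \action{\mathbf{T},\Phi}-\action{{p_*},\Div \Phi}-\action{\boldF,\nabla \Phi} = -\action{\boldF,\nabla \Phi},
\]
which is the desired identity.

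I do not expect a genuine obstacle: Lemma \ref{lem:de-Rham} does all the real work. The only point requiring a moment's care is the continuity of $\Lambda^{2\alpha}: \mathscr{S}\to \Leb{1}\cap \Leb{\infty}$ for non-integer $\alpha$, which is a standard Fourier-side estimate already implicit in the paper's description of $\Lambda^s$ on $\mathscr{S}$.
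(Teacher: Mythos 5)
Your proposal is correct and follows essentially the same route as the paper: your distribution $\mathbf{T}$ is exactly $\Lambda^{2\alpha}\boldu-\Div\boldF\in\mathscr{S}'$, which the paper also observes vanishes on $C_{c,\sigma}^\infty$ by taking $\psi=0$ in \eqref{eq:very-weaksol} and then feeds into Lemma \ref{lem:de-Rham}, with the divergence-free identity extended from $C_c^\infty$ to $\mathscr{S}$ by density just as you do. The continuity of $\Lambda^{2\alpha}:\mathscr{S}\to\Leb{1}\cap\Leb{\infty}$ that you flag is exactly the boundedness statement recorded at the end of Subsection 2.2, so no gap remains.
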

\begin{proof}
Since $C_c^\infty$ is dense in $\mathscr{S}$, we easily have
\[
\int_{\mathbb{R}^d} \boldu \cdot  \nabla \psi   \myd{x}=0 \quad\mbox{for all}\,\, \psi  \in \mathscr{S}.
\]
Note that $\Lambda^{2\alpha}\boldu -\Div \boldF \in \mathscr{S}'$. Moreover, for all $\Phi \in C_{c,\sigma}^\infty$, we have
$$
\action{\Lambda^{2\alpha}\boldu -\Div \boldF,\Phi}=\int_{\mathbb{R}^d} \boldu \cdot   \Lambda^{2\alpha} \Phi   \myd{x} +  \action{ \boldF ,\nabla \Phi} = 0 .
$$
 Hence by Lemma \ref{lem:de-Rham}, there exists ${p_*}\in \mathscr{S}'$ such that
$$
\int_{\mathbb{R}^d} \boldu \cdot   \Lambda^{2\alpha} \Phi   \myd{x} +  \action{ \boldF ,\nabla \Phi} = \action{\Lambda^{2\alpha}\boldu -\Div \boldF ,\Phi}= \action{{p_*},\Div \Phi}
$$
for all $\Phi \in \mathscr{S}$. This completes the proof of Lemma \ref{lem:existence of p}.
\end{proof}

We are now able to prove uniqueness of very weak solutions of \eqref{eq:very-weaksol}.

\begin{lem}\label{lem:Liouville-type-theorem}
Let $\alpha \geq 0$. If $\boldu\in \Leb{1}+\Leb{\infty}$ is a very weak solution of \eqref{frac-Stokes}   with $\boldF=\mathbf{0}$, then $\boldu$ is constant. In addition, if  $\boldu \in \Lor{q}{\infty}$  for some $1<q<\infty$, then  $\boldu$ is identically zero.
\end{lem}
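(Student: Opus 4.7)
My plan is to apply the de~Rham-type Lemma~\ref{lem:existence of p} to recover a pressure, reduce the system to a distributional equation, and then use Fourier analysis and the growth constraints on $\boldu$ to pin down its form.

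First I would invoke Lemma~\ref{lem:existence of p} to produce some $p_* \in \mathscr{S}'$ with $\Lambda^{2\alpha}\boldu + \nabla p_* = \mathbf{0}$ in $\mathscr{S}'$, while $\Div\boldu = 0$ in $\mathscr{S}'$. Taking the divergence of the first identity gives $\Delta p_* = 0$ in $\mathscr{S}'$, so $p_*$ is a harmonic tempered distribution, hence a polynomial. Consequently $\nabla p_*$ is a polynomial vector field, and the equation reduces to $\Lambda^{2\alpha}\boldu = -\nabla p_*$ in $\mathscr{S}'$.

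Next, I would pass to the Fourier side. Since the Fourier transform of a polynomial is a distribution supported at the origin, $(2\pi|\xi|)^{2\alpha}\widehat{\boldu}$ is supported at $\{0\}$. Given any $\phi \in C_c^\infty(\mathbb{R}^d \setminus \{0\})$, the function $(2\pi|\xi|)^{2\alpha}\phi$ lies in $C_c^\infty(\mathbb{R}^d \setminus \{0\}) \subset \mathscr{S}$ (the case $\alpha=0$ is trivial, and for $\alpha>0$ we use that $|\xi|^{2\alpha}$ is smooth away from $0$), and the map $\phi \mapsto (2\pi|\xi|)^{2\alpha}\phi$ is a bijection of $C_c^\infty(\mathbb{R}^d \setminus \{0\})$ onto itself. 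Testing against $\phi/(2\pi|\xi|)^{2\alpha}$ shows $\widehat{\boldu}$ itself is supported at $\{0\}$, hence is a finite sum of derivatives of $\delta_0$, and therefore $\boldu$ is a polynomial.

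The main obstacle is to now rule out non-constant polynomials from the class $L^1 + L^\infty$. Writing $\boldu = \boldu_1 + \boldu_2$ with $\boldu_1 \in L^1$ and $\boldu_2 \in L^\infty$, if $\boldu$ were non-constant then for $|x|$ large enough we would have $|\boldu(x)| \ge 2\|\boldu_2\|_\infty$, giving $|\boldu_1(x)| = |\boldu(x) - \boldu_2(x)| \ge |\boldu(x)|/2$, and the right-hand side is bounded below by a polynomial of degree $\ge 1$, which is not integrable at infinity. Hence $\boldu$ must be a constant.

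Finally, for the additional conclusion, if $\boldu$ is a nonzero constant vector $c$, then $d_{|\boldu|}(s) = +\infty$ for every $0 < s < |c|$, so $\boldu^*(t) = |c|$ for all $t > 0$, and $\sup_{t>0}t^{1/q}\boldu^*(t) = +\infty$, contradicting $\boldu \in \Lor{q}{\infty}$ for any $1<q<\infty$. Thus $\boldu \equiv 0$, completing the proof.
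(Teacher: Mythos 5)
Your overall skeleton matches the paper's: recover $p_*$ from Lemma \ref{lem:existence of p}, show $p_*$ is a polynomial, deduce that $\boldu$ is a polynomial, and finish with the elementary facts about polynomials in $\Leb{1}+\Leb{\infty}$ and constants in $\Lor{q}{\infty}$. Where you diverge is the middle: you work with the equation in $\mathscr{S}'$ and argue via the support of $\widehat{\boldu}$, whereas the paper tests only against $\mathscr{S}_0$ and invokes Lemma \ref{lem:polynomial-isomorphism} twice, using that $\Lambda^{2\alpha}$, $\nabla$, and $\Delta$ are isomorphisms of $\mathscr{S}_0$. Your Fourier-support step itself is fine: for $\phi\in C_c^\infty(\mathbb{R}^d\setminus\{0\})$ one has $\langle\widehat{\Lambda^{2\alpha}\boldu},\phi\rangle=\langle\widehat{\boldu},(2\pi|\cdot|)^{2\alpha}\phi\rangle$ directly from the duality definition of $\Lambda^{2\alpha}\boldu$, so $\supp\widehat{\boldu}\subset\{0\}$ follows as you say.

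There are, however, two places where the written argument has gaps. First, "taking the divergence gives $\Delta p_*=0$ in $\mathscr{S}'$" silently uses $\Div(\Lambda^{2\alpha}\boldu)=\Lambda^{2\alpha}(\Div\boldu)=0$. For $\boldu\in\Leb{1}+\Leb{\infty}$ this is not automatic: it amounts to $\int\boldu\cdot\nabla(\Lambda^{2\alpha}\psi)\,dx=0$ for all $\psi\in\mathscr{S}$, and $\Lambda^{2\alpha}\psi$ is not a Schwartz function (it lies only in $\mathscr{S}_{2\alpha}$), so the divergence-free identity, stated for $C_c^\infty$ test functions, must be extended — e.g.\ by a cutoff argument exploiting the $|x|^{-d-2\alpha}$ decay of $\Lambda^{2\alpha}\psi$ and its gradient (the case $\alpha=0$ being trivial). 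The paper avoids this entirely by taking $\psi\in\mathscr{S}_0$, where $\Lambda^{2\alpha}\psi\in\mathscr{S}_0$; one then only concludes that $\langle p_*,\Delta\psi\rangle=0$ on $\mathscr{S}_0$, i.e.\ that $p_*$ is a polynomial (which is all that is needed), rather than that $p_*$ is harmonic. Second, your proof that a non-constant polynomial cannot lie in $\Leb{1}+\Leb{\infty}$ asserts $|\boldu(x)|\ge 2\norm{\boldu_2}{\infty}$ for all large $|x|$; this is false for polynomials such as $\boldu(x)=(x_1,0,\dots,0)$, which vanish on an unbounded set. The correct (and still easy) route is to note that for a non-constant polynomial the super-level set $\{|\boldu|\ge M\}$ has infinite measure for every $M$ (Fubini along a direction in which $\boldu$ is non-constant), and on this set $|\boldu_1|\ge M-\norm{\boldu_2}{\infty}$, contradicting $\boldu_1\in\Leb{1}$. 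Both issues are repairable, and your final $\Lor{q}{\infty}$ step is correct as written.
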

\begin{proof}
By Lemma \ref{lem:existence of p}, there exists ${p_*}\in \mathscr{S}'$ such that
$$
\int_{\mathbb{R}^d} \boldu \cdot   \Lambda^{2\alpha} \Phi   \myd{x}-\action{{p_*},\Div \Phi}  =0 \quad\mbox{and}\quad \int_{\mathbb{R}^d} \boldu \cdot  \nabla \psi   \myd{x}=0
$$
for all $( \Phi , \psi ) \in \mathscr{S}$.

Let $\psi\in \mathscr{S}_0$ be given. Then since $\nabla \psi \in \mathscr{S}_0$, $\Lambda^{2\alpha}\psi \in \mathscr{S}_0$,  and $\Lambda^{2\alpha}\nabla \psi = \nabla \Lambda^{2\alpha}\psi$, we have
\[    \action{{p_*} ,\Delta \psi}= \int_{\mathbb{R}^d} \boldu \cdot   \Lambda^{2\alpha} \nabla \psi   \myd{x}=   \int_{\mathbb{R}^d} \boldu \cdot \nabla (\Lambda^{2\alpha}\psi) \myd{x}=0.\]
Since $\Delta = -\Lambda^2  : \mathscr{S}_0 \rightarrow \mathscr{S}_0$ is an isomorphism, it follows from Lemma \ref{lem:polynomial-isomorphism} that  ${p_*}$ is a polynomial. Hence, for all $\Phi \in \mathscr{S}_0$, we have
\[  \int_{\mathbb{R}^d} \boldu \cdot   \Lambda^{2\alpha} \Phi   \myd{x}= \action{{p_*},\Div \Phi}=0 .\]
Therefore, by Lemma \ref{lem:polynomial-isomorphism} again,  $\boldu$ is a polynomial. It is easy to check that  polynomials in $\Leb{1}+\Leb{\infty}$ are constant and the only constant function   belonging to $\Lor{q}{\infty}$ is the zero function. This  completes  the proof of Lemma \ref{lem:Liouville-type-theorem}.
\end{proof}

The following theorem shows   existence and uniqueness of very weak solutions in $\Leb{q}$ of \eqref{frac-Stokes} when $\boldF \in \Leb{p}$, where $1<p<d/(2\alpha-1)$ and $1/q=1/p-(2\alpha-1)/d$.

\begin{thm}\label{theorem:Frational Stokes-Lpdata}
Let $1/2<\alpha<(d+1)/2$, $1<  p<d/(2\alpha-1)$, and $1/q=1/p-(2\alpha-1)/d$.
Then for each $\boldF\in \Leb{p}$, there exists a unique very weak solution $\boldu$ in   $\Leb{q}$ of \eqref{frac-Stokes}. Moreover, the solution $\boldu$ is given by $\boldu=\mathbf{U}_\alpha * \boldF$  and satisfies
$$
\norm{\boldu }{\NLeb{q}}\apprle \norm{\boldF}{\NLeb{p}}.
$$
In addition, if $\boldF\in   \thSob{1-\alpha}$, then $\boldu $ belongs to $ \thSob{\alpha}$ and satisfies the energy identity \eqref{eq:divF-identity}.
\end{thm}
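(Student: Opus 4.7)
The plan is to take $\boldu:=\mathbf{U}_\alpha * \boldF$ as the candidate, derive the $\Leb{q}$ bound from Young's convolution inequality in weak spaces, verify the very weak solution identity by approximation from $\mathscr{S}_0$, and recover the energy identity through the $\thSob{\alpha}$-theory already proved in Theorem~\ref{thm:fractional-Stokes}. Uniqueness is essentially free: if $\boldu_1,\boldu_2\in \Leb{q}$ are two very weak solutions of \eqref{frac-Stokes} with the same datum $\boldF$, then $\boldu_1-\boldu_2$ is a very weak solution with trivial datum and lies in $\Leb{q}\hookrightarrow \Lor{q}{\infty}$ with $1<q<\infty$, so Lemma~\ref{lem:Liouville-type-theorem} forces $\boldu_1=\boldu_2$.

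For the norm bound, the pointwise estimate $|\mathbf{U}_\alpha(x)|\le C|x|^{-(d+1-2\alpha)}$ places $\mathbf{U}_\alpha$ in $\Lor{d/(d+1-2\alpha)}{\infty}$. The hypotheses $1/2<\alpha<(d+1)/2$ and $1<p<d/(2\alpha-1)$ put the exponents $r_1=d/(d+1-2\alpha)$, $r_2=p$, and $r=q$ in the regime $1<r_1,r_2,r<\infty$ with $1/q+1=1/r_1+1/r_2$, so Lemma~\ref{lem:Young} gives
\[
 \norm{\boldu}{\NLeb{q}}\apprle \norm{\mathbf{U}_\alpha}{\NLor{d/(d+1-2\alpha)}{\infty}}\norm{\boldF}{\NLeb{p}}\apprle \norm{\boldF}{\NLeb{p}}.
\]

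The core step is verifying \eqref{eq:very-weaksol} for general $\boldF\in \Leb{p}$. When $\boldF\in\mathscr{S}_0$, Lemma~\ref{lemma:frac-Stokes-S and S'} combined with the derivation of Subsection~\ref{subsec:sol-formula} shows that $\boldu=\mathbf{U}_\alpha*\boldF$ lies in $\mathscr{S}_0$ and solves \eqref{frac-Stokes} classically, so \eqref{eq:very-weaksol} follows by integration by parts against any $\Phi\in C_{c,\sigma}^\infty$ and $\psi\in C_c^\infty$. To pass to $\Leb{p}$ data, I first argue that $\mathscr{S}_0$ is dense in $\Leb{p}$: any $g\in \Leb{p'}$ annihilating $\mathscr{S}_0$ is a polynomial by Lemma~\ref{lem:polynomial-isomorphism}, and $1<p'<\infty$ then forces $g=0$. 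Choose $\boldF_n\in\mathscr{S}_0$ with $\boldF_n\to\boldF$ in $\Leb{p}$ and set $\boldu_n:=\mathbf{U}_\alpha*\boldF_n$; Young's inequality yields $\boldu_n\to\boldu$ in $\Leb{q}$, and letting $n\to\infty$ in \eqref{eq:very-weaksol} for $(\boldu_n,\boldF_n)$ delivers \eqref{eq:very-weaksol} for $(\boldu,\boldF)$.

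For the last assertion, suppose additionally $\boldF\in\thSob{1-\alpha}$. Rewriting the Fourier formula from Subsection~\ref{subsec:sol-formula} in terms of $\Lambda^{1-\alpha}\boldF$ gives
\[
 \widehat{\Lambda^\alpha u^j}(\xi)=i\sum_{k,l=1}^d \frac{\xi_l}{|\xi|}\left(\delta_{jk}-\frac{\xi_j\xi_k}{|\xi|^2}\right)\widehat{\Lambda^{1-\alpha}F^{kl}}(\xi),
\]
and since the multiplier is bounded, $\norm{\Lambda^\alpha \boldu}{\NLeb{2}}\apprle \norm{\boldF}{\thSob{1-\alpha}}$; hence $\boldu\in\thSob{\alpha}$. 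A parallel Fourier check confirms that $\boldu$, together with the pressure defined by the Fourier formula in Subsection~\ref{subsec:sol-formula}, solves \eqref{frac-Stokes} in $\mathscr{S}_0'$, so the uniqueness clause of Theorem~\ref{thm:fractional-Stokes} identifies this pair as the $\thSob{\alpha}\times\thSob{1-\alpha}$ solution and, in particular, yields the energy identity \eqref{eq:divF-identity}. The principal technical hurdle throughout is the density-and-approximation bridge from the explicit $\mathscr{S}_0$ formula to $\Leb{p}$ data; every other ingredient reduces to convolution and Fourier-multiplier estimates already at hand.
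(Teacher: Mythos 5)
Your treatment of existence, the $\Leb{q}$ bound, the verification of \eqref{eq:very-weaksol} by approximating $\boldF$ in $\Leb{p}$ by $\mathscr{S}_0$ data, and uniqueness via Lemma~\ref{lem:Liouville-type-theorem} is correct and essentially identical to the paper's argument (your duality proof that $\mathscr{S}_0$ is dense in $\Leb{p}$, via Lemma~\ref{lem:polynomial-isomorphism}, is a fine substitute for the density facts the paper invokes).

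The last assertion is where your route diverges, and it has a gap as written. The Fourier formula of Subsection~\ref{subsec:sol-formula} was derived only for $\boldF\in\mathscr{S}_0$; for $\boldF\in\Leb{p}\cap\thSob{1-\alpha}$ your $\boldu$ is defined by the convolution $\mathbf{U}_\alpha*\boldF$, whose Fourier transform is merely a tempered distribution, and the symbol $(2\pi i\xi_l)(2\pi|\xi|)^{-2\alpha}\bigl(\delta_{jk}-\xi_j\xi_k/|\xi|^2\bigr)$ is neither smooth nor bounded near the origin, so the product with $\widehat{\boldF}$ (and hence the ``rewriting'' $\widehat{\Lambda^\alpha u^j}=i\,\xi_l|\xi|^{-1}(\delta_{jk}-\xi_j\xi_k/|\xi|^2)\widehat{\Lambda^{1-\alpha}F^{kl}}$, as well as the ``parallel Fourier check'' that $(\boldu,p_*)$ solves \eqref{frac-Stokes}) is not defined a priori and is exactly what has to be proved. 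The natural repair is to approximate $\boldF$ \emph{simultaneously} in $\Leb{p}$ and in $\thSob{1-\alpha}$ by a single sequence $\{\boldG_l\}\subset\mathscr{S}_0$ (this is Lemma~\ref{lem:density-approx-S0}\,(4), proved in Appendix~\ref{sec:app-b}; density in $\Leb{p}$ alone does not suffice), apply the multiplier bound, or equivalently Theorem~\ref{thm:fractional-Stokes}, to the approximants, and then identify the $\Leb{q}$-limit of $\mathbf{U}_\alpha*\boldG_l$ with the $\thSob{\alpha}$-limit by testing against $\mathscr{S}_0$. Once this identification is carried out, your plan of invoking the uniqueness clause of Theorem~\ref{thm:fractional-Stokes} to transfer the energy identity \eqref{eq:divF-identity} does work; the paper instead passes the energy identity of the approximants to the limit directly, which avoids having to verify separately that $p_*\in\thSob{1-\alpha}$ and that the pair solves \eqref{frac-Stokes} in $\mathscr{S}_0'$. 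In short: the scheme is sound, but the multiplier shortcut must be backed by the two-norm approximation, at which point it essentially coincides with the paper's proof.
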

\begin{proof} Define $\boldu=\mathbf{U}_\alpha * \boldF$. Then since $\mathbf{U}_\alpha \in \Lor{d/(d+1-2\alpha)}{\infty}$, it follows from   Lemma \ref{lem:Young}  that $\boldu \in \Leb{q}$ and $\norm{\boldu }{\NLeb{q}}\apprle \norm{\boldF}{\NLeb{p}}$.

We now show that $\boldu$ is a   very weak solution   of \eqref{frac-Stokes}.
To show this, choose a sequence $\{\boldF_l\}$ in $\mathscr{S}_0$ such that $\boldF_l \rightarrow \boldF$ in $\Leb{p}$.
By Lemma \ref{lemma:frac-Stokes-S and S'}, there exists a unique solution  $( \boldu_l , p_{*, l} ) \in \mathscr{S}_0 \times \mathscr{S}_0$ of  \eqref{frac-Stokes} with $\boldF = \boldF_l$. Moreover, it was shown in Subsection 3.2 that
\[     \boldu_l=\mathbf{U}_\alpha *\boldF_l.\]
For all $(\Phi , \psi ) \in \mathscr{S}_0$, we have
\[
  \int_{\mathbb{R}^d} \boldu_l \cdot \Lambda^{2\alpha} \Phi \myd{x} =\int_{\mathbb{R}^d} (\Lambda^{2\alpha} \boldu_l)\cdot \Phi \myd{x}=-\int_{\mathbb{R}^d} (\boldF_l : \nabla \Phi +  \nabla p_{*,l} \cdot \Phi) \myd{x}
\]
and
\[
\int_{\mathbb{R}^d}  \boldu_l \cdot \nabla \psi  \myd{x} =0 .
\]
Since $\boldu_l, {p_*}_l, \boldF_l\in \mathscr{S}_0$, it follows from Lemma \ref{lem:density-approx-S0} that
\begin{align*}
  \int_{\mathbb{R}^d} \boldu_l \cdot \left( \Lambda^{2\alpha} \Phi + \nabla \psi \right) dx &=-\int_{\mathbb{R}^d} (\boldF_l : \nabla \Phi +  \nabla p_{*,l} \cdot \Phi  )  \myd{x} \\
\intertext{for any $(\Phi ,\psi ) \in \mathscr{S}$. Hence for all $\Phi \in C_{c,\sigma}^\infty$ and $\psi \in C_c^\infty$,  we get}
  \int_{\mathbb{R}^d} \boldu_l \cdot  \left(\Lambda^{2\alpha} \Phi +\nabla \psi\right) dx &=-\int_{\mathbb{R}^d} \boldF_l :\nabla \Phi \myd{x}.
\end{align*}
Since $\boldF_l\rightarrow \boldF$ in $\Leb{p}$, it follows from   Lemma \ref{lem:Young} that $\boldu_l\rightarrow \boldu$ in $\Leb{q}$. Therefore, for all $\Phi \in C_{c,\sigma}^\infty$  and $\psi \in C_c^\infty$, we have
\[  \int_{\mathbb{R}^d} \boldu  \cdot \left( \Lambda^{2\alpha} \Phi +\nabla \psi \right)  dx=-\int_{\mathbb{R}^d} \boldF  :\nabla \Phi \myd{x}. \]
Hence  $\boldu$ is a   very weak solution of \eqref{frac-Stokes}, which is unique due to  Lemma \ref{lem:Liouville-type-theorem}.

To complete the proof, suppose in addition that  $\boldF\in   \thSob{1-\alpha}$. Then by Lemma \ref{lem:density-approx-S0}, there is a sequence $\{\boldG_l\}$ in $\mathscr{S}_0$ such that $\boldG_l \rightarrow \boldF$    in both $\Leb{p}$ and  $\thSob{1-\alpha}$. By Lemma \ref{lemma:frac-Stokes-S and S'}, there exists a unique solution  $( \boldv_l , \tilde{p}_{*, l} ) \in \mathscr{S}_0 \times \mathscr{S}_0$ of  \eqref{frac-Stokes} with $\boldF = \boldG_l$. Moreover, by Theorem \ref{thm:fractional-Stokes}, $\{ \boldv_l \}$ converges in $\thSob{\alpha}$ to some  $\boldv \in \thSob{\alpha}$ satisfying
\[
 \int_{\mathbb{R}^d} |\Lambda^{\alpha} \boldv|^2 \myd{x}=  \action{ \Div \boldF, \boldv} .
\]
On the other hand, it was already shown that
$\boldv_l =\mathbf{U}_\alpha *\boldG_l$ and $\boldv_l \to \boldu$ in $\Leb{q}$.
Hence,  for all $\Phi \in \mathscr{S}$, we have
\begin{align*}
\int_{\mathbb{R}^d}\boldu \cdot \Lambda^{\alpha} \Phi \,dx & = \lim_{l\to\infty} \int_{\mathbb{R}^d}\boldv_l \cdot \Lambda^{\alpha} \Phi \,dx \\
&= \lim_{l\to\infty} \int_{\mathbb{R}^d}\Lambda^{\alpha} \boldv_l \cdot \Phi \,dx
= \int_{\mathbb{R}^d}\Lambda^{\alpha} \boldv  \cdot \Phi \,dx ,
\end{align*}
which implies that $\Lambda^{\alpha} \boldu = \Lambda^{\alpha} \boldv$ in $\mathscr{S}'$. Since $\Lambda^\alpha   : \mathscr{S}_0 \rightarrow \mathscr{S}_0$ is an isomorphism, we also have
\[
\int_{\mathbb{R}^d}  \boldu \cdot  \Phi \, dx  = \int_{\mathbb{R}^d} \boldv \cdot \Phi \, dx  \quad\mbox{for all}\,\, \Phi \in \mathscr{S}_0 .
\]
Therefore,
\[
\action{ {\rm div}\, \boldF , \boldu  } = \lim_{l\to\infty} \int_{\mathbb{R}^d} \boldu \cdot  {\rm div}\, \boldG_l \, dx = \lim_{l\to\infty} \int_{\mathbb{R}^d} \boldv \cdot {\rm div}\, \boldG_l \, dx  = \action{   {\rm div}\, \boldF , \boldv }.
\]
This shows that $\boldu$ satisfies the energy identity \eqref{eq:divF-identity}. The proof of Theorem \ref{theorem:Frational Stokes-Lpdata} is completed.
\end{proof}

The following theorem shows   existence and uniqueness of very weak solutions in $\Lor{d/(d+1-2\alpha)}{\infty}$ of \eqref{frac-Stokes} when $\boldF \in \Leb{1}$.

\begin{thm}\label{theorem:Frational Stokes-Lpdata-intersection}
Let $1/2<\alpha<(d+1)/2$. Then for each  $\boldF\in \Leb{1} $,
there exists a unique very weak solution  $\boldu $   of \eqref{frac-Stokes}  in $\Lor{d/(d+1-2\alpha)}{\infty}$. Moreover, the solution $\boldu$ is  given by
$\boldu=\mathbf{U}_\alpha * \boldF$ and satisfies
$$
\norm{  \boldu}{\NLor{d/(d+1-2\alpha)}{\infty}}\apprle \norm{{\boldF}}{\NLeb{1}}.
$$
\end{thm}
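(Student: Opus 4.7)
The plan is to define $\boldu := \mathbf{U}_\alpha \ast \boldF$ and to verify, in order, the membership and norm bound, the very weak formulation \eqref{eq:very-weaksol}, and uniqueness. For the first, the pointwise estimate $|\mathbf{U}_\alpha(x)| \le C|x|^{-(d+1-2\alpha)}$ from \eqref{eq:Stokes-Green} places $\mathbf{U}_\alpha$ in $\Lor{d/(d+1-2\alpha)}{\infty}$. Since $1/2 < \alpha < (d+1)/2$ forces $1 < d/(d+1-2\alpha) < \infty$, the endpoint weak-type Young inequality (Lemma \ref{lem:Young} with $p_2 = 1$) yields simultaneously $\boldu \in \Lor{d/(d+1-2\alpha)}{\infty}$ and the desired bound $\norm{\boldu}{\NLor{d/(d+1-2\alpha)}{\infty}} \apprle \norm{\boldF}{\NLeb{1}}$.

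For the very weak formulation, fix $\Phi \in C_{c,\sigma}^\infty$ and $\psi \in C_c^\infty$, and note that $\Lambda^{2\alpha}\Phi$ and $\nabla \psi$ lie in $\mathscr{S} \subset \Lor{d/(2\alpha-1)}{1}$. I would first apply Lemma \ref{lem:Holder} to bound $\int |U_{\alpha,j}^{kl}(x-y)|\,|\Lambda^{2\alpha}\Phi^j(x)|\,dx$ uniformly in $y$, which together with $\boldF \in \Leb{1}$ justifies Fubini in
\[
\int_{\mathbb{R}^d}\boldu\cdot\Lambda^{2\alpha}\Phi\,dx = \sum_{k,l}\int_{\mathbb{R}^d} F^{kl}(y)\left[\sum_{j=1}^d \int_{\mathbb{R}^d} U_{\alpha,j}^{kl}(x-y)\,\Lambda^{2\alpha}\Phi^j(x)\,dx\right]dy.
\]
Using the explicit symbol
\[
\widehat{U_{\alpha,j}^{kl}}(\xi) = \frac{2\pi i\,\xi_l}{(2\pi|\xi|)^{2\alpha}}\left(\delta_{jk} - \frac{\xi_j\xi_k}{|\xi|^2}\right)
\]
obtained in Subsection \ref{subsec:sol-formula}, a direct Plancherel calculation combined with the divergence-free constraint $\xi\cdot\widehat\Phi(\xi)=0$ (which annihilates the Leray-projector term) reduces the inner bracket to $-D_l\Phi^k(y)$. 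Reassembling yields $\int \boldu \cdot \Lambda^{2\alpha}\Phi\,dx = -\int \boldF:\nabla\Phi\,dy$. The identity $\int \boldu \cdot \nabla\psi\,dx = 0$ follows by the same Fubini/Plancherel scheme from the symbol relation $\sum_j \xi_j\,\widehat{U_{\alpha,j}^{kl}}(\xi) = 0$, which is immediate from the formula above. Together these give \eqref{eq:very-weaksol}.

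Uniqueness then follows at once from Lemma \ref{lem:Liouville-type-theorem}: the difference $\boldu_1 - \boldu_2$ of two such solutions lies in $\Lor{q}{\infty}$ with $1 < q = d/(d+1-2\alpha) < \infty$ and is a very weak solution of \eqref{frac-Stokes} with $\boldF = \mathbf{0}$, hence vanishes identically. The main obstacle I anticipate is the pointwise adjoint identity at the core of the second step: one cannot reduce to the $\mathscr{S}_0$ case of Lemma \ref{lemma:frac-Stokes-S and S'} by density as in Theorem \ref{theorem:Frational Stokes-Lpdata}, because $\mathscr{S}_0$ fails to be dense in $\Leb{1}$ (its elements have vanishing integral). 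Working directly on the Fourier side with the explicit symbol of $\mathbf{U}_\alpha$, together with the Lorentz-duality estimate needed for the Fubini step, bypasses this obstruction cleanly.
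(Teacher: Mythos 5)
Your proposal is correct in its conclusions and in most of its steps, but the middle step takes a genuinely different route from the paper. The first and last steps coincide with the paper's proof: membership and the bound come from the endpoint case $p_2=1$ of Lemma \ref{lem:Young}, and uniqueness from Lemma \ref{lem:Liouville-type-theorem}. For the verification that $\boldu=\mathbf{U}_\alpha*\boldF$ satisfies \eqref{eq:very-weaksol}, however, the paper does \emph{not} attempt to approximate $\boldF$ by $\mathscr{S}_0$; it fixes $1<r<d/(2\alpha-1)$, approximates $\boldF$ in $\Leb{1}$ by $\boldG_l\in\Leb{1}\cap\Leb{r}$, applies the already-proved Theorem \ref{theorem:Frational Stokes-Lpdata} to each $\boldG_l$, and passes to the limit using Lemma \ref{lem:Young} to get $\mathbf{U}_\alpha*\boldG_l\to\boldu$ in $\Lor{d/(d+1-2\alpha)}{\infty}$. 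So the obstruction you anticipate (non-density of $\mathscr{S}_0$ in $\Leb{1}$, which is true since its elements have vanishing integral) is bypassed by a two-stage density argument rather than by abandoning density; this is shorter than redoing the Fourier analysis. Your alternative — Fubini via Lorentz--H\"older and then the symbol identity
\[
\sum_j \widehat{U_{\alpha,j}^{kl}}(-\xi)\,(2\pi|\xi|)^{2\alpha}\widehat{\Phi^j}(\xi)=-2\pi i\xi_l\widehat{\Phi^k}(\xi)
\]
(using $\xi\cdot\widehat{\Phi}=0$), so that the inner bracket equals $-D_l\Phi^k(y)$ — is also viable, and the sign does come out right once the reflection in $U_{\alpha,j}^{kl}(x-y)$ is accounted for; it has the merit of being self-contained and of exhibiting $\mathbf{U}_\alpha$ explicitly as an adjoint fundamental solution. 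The one point you should not gloss over is the phrase ``direct Plancherel calculation'': $U_{\alpha,j}^{kl}$ is only in $\Lor{d/(d+1-2\alpha)}{\infty}$ and $\Lambda^{2\alpha}\Phi^j$ is not in $\mathscr{S}$ (for non-integer $\alpha$ it only decays like $|x|^{-d-2\alpha}$), so the convolution theorem is not immediately applicable; you need a short justification, e.g.\ interpret $\int U_{\alpha,j}^{kl}(x-y)\Lambda^{2\alpha}\Phi^j(x)\,dx$ through the distributional definition of $\Lambda^{2\alpha}$ on $\Leb{1}+\Leb{\infty}$ given in Section \ref{sec:prelim}, or regularize (mollify/truncate) and pass to the limit. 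With that caveat supplied, your argument closes; without it, the step is a sketch rather than a proof, which is the main thing the paper's approximation argument buys you for free.
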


\begin{proof} Given  $\boldF\in \Leb{1} $, we define $\boldu=\mathbf{U}_\alpha * \boldF$. Then it follows from   Lemma   \ref{lem:Young} that  $\boldu \in \Lor{d/(d+1-2\alpha)}{\infty}$ and
$\norm{  \boldu}{\NLor{d/(d+1-2\alpha)}{\infty}}\apprle \norm{{\boldF}}{\NLeb{1}}$. Hence  by   Lemma \ref{lem:Liouville-type-theorem}, it remains to show  that $\boldu$ is a   very weak solution of \eqref{frac-Stokes}.

 Fixing any $r$ with $1<r<d/(2\alpha-1)$, we choose a sequence $\{\boldG_l\}$ in $\Leb{1}\cap \Leb{r}$ such that $\boldG_l\rightarrow \boldF$ in $\Leb{1}$. Then by Theorem \ref{theorem:Frational Stokes-Lpdata}, $\boldv_l = \mathbf{U}_\alpha * \boldG_l$ satisfies
\[     \int_{\mathbb{R}^d} \boldv_l \cdot ( \Lambda^{2\alpha} \Phi + \nabla \psi ) \myd{x}=-\int_{\mathbb{R}^d} \boldG_l :\nabla \Phi \myd{x}   \]
for all $\Phi \in C_{c,\sigma}^\infty$ and $\psi \in C_c^\infty$. By Lemma   \ref{lem:Young} again,  $\boldv_l \rightarrow \boldu$ in $\Lor{d/(d+1-2\alpha)}{\infty}$. Hence for all $\Phi \in C_{c,\sigma}^\infty$ and $\psi \in C_c^\infty$, we obtain
\[     \int_{\mathbb{R}^d} \boldu \cdot ( \Lambda^{2\alpha} \Phi  +\nabla \psi ) \myd{x}=-\int_{\mathbb{R}^d} \boldF:\nabla \Phi \myd{x}.  \]
This completes the proof of Theorem  \ref{theorem:Frational Stokes-Lpdata-intersection}. 
\end{proof}

\section{Perturbed fractional heat equations}\label{sec:perturbed-heat}

For $0<T<\infty$, we consider the following initial value  problem for perturbed fractional heat equations in $\mathbb{R}^d , d \ge 2$:
\begin{equation}\label{eq:PFHE}
\left\{
\begin{alignedat}{2}
	\partial_t\boldb +\Lambda^{2\beta}\boldb+\Div\dsp(\boldu\otimes \boldb) &=\Div\boldF &&\quad \text{in } \mathbb{R}^d\times (0,T), \\
	\boldb(\cdot,0) &=\boldb_0 &&\quad \text{on } \mathbb{R}^d ,
\end{alignedat}
\right.
\end{equation}
where  $\beta$ is a positive number and  $\boldu:\mathbb{R}^d\times (0,T)\rightarrow \mathbb{R}^d$ is a given   vector field with $\Div\boldu=0$.

By a \emph{weak solution} of \eqref{eq:PFHE}, we mean a  vector field $\boldb\in \Leb{\infty}(0,T;\Leb{2})\cap \Leb{2}(0,T;\tSob{\beta})$ satisfying
\begin{align*}
 \int_0^T \int_{\mathbb{R}^d} \boldb \cdot(-\partial_t \Phi  +  \Lambda^{2\beta}\Phi) \myd{x}dt\, & -\int_0^T\int_{\mathbb{R}^d} (\boldu\otimes \boldb):\nabla \Phi \myd{x}dt  \\
  = & -\int_0^T\int_{\mathbb{R}^d} \boldF:\nabla \Phi \myd{x}dt+ \int_{\mathbb{R}^d} \boldb_0 \cdot \Phi( 0) \,dx
 \end{align*}
for all $\Phi \in C_c^\infty (\mathbb{R}^d\times [0,T))$, provided that the integrals are all well-defined. Note that if $\beta \geq 1$, then by Proposition \ref{prop:Gagliardo},
\begin{equation}\label{eq:our-interpolation}
	\norm{\nabla\boldb}{\NLeb{2}} \apprle  \norm{\boldb}{\NLeb{2}}^{1-1/\beta}\norm{\Lambda^\beta \boldb}{\NLeb{2}}^{1/\beta}.
\end{equation}
Hence if $\beta\geq 1$, then every weak solution   of \eqref{eq:PFHE} belongs to $\Leb{2\beta}(0,T;\thSob{1})$.

This section consists of two parts. We first show that if  $\boldF \in \Leb{2\beta/(2\beta -1)}(0,T;\Leb{2})$, $\beta\geq 1$, and $\boldb_0 \in \Leb{2}$,  then there exists a unique weak solution  of \eqref{eq:PFHE} with $\boldu=0$  satisfying the energy identity. Next, extending this result to nonzero $\boldu$, we  prove that if $\beta \ge 1$ and  $\boldu \in \Leb{\infty}(0,T;\Lor{d/(d+1-2\alpha)}{\infty})\cap \Leb{2}(0,T;\thSob{\alpha})$, then for every $\boldF \in \Leb{2\beta/(2\beta -1)}(0,T;\Leb{2})$ and $\boldb_0 \in \Leb{2}$,  there exists a unique weak solution $\boldb$ of  \eqref{eq:PFHE}. Moreover, the solution $\boldb$    satisfies the energy identity, which  will be used to prove   uniqueness of weak solutions of the original nonlinear problem \eqref{eq:MHD-relaxed}.

To prove   existence and uniqueness of weak solutions  of \eqref{eq:PFHE} when $\boldu=0$, we introduce the Fourier truncation operators $\mathcal{S}_R$ ($R>0$) defined by
\[   \widehat{\mathcal{S}_R f}(\xi) = \chi_{B_R}(\xi)\hat{f}(\xi) \quad (f \in L_2 ),\]
where $\chi_{B_R}$ is the characteristic function of   $B_R$.   The  following  properties of $\mathcal{S}_R$ are quite  elementary.
\begin{lem}\label{lem:truncation-operator} Let   $0 \le s  <  t$.
\begin{enumerate}[label=\textnormal{(\arabic*)}]
\item  If $f\in \Leb{2}$, then
$\mathcal{S}_R f \in H^s$ and $\norm{\mathcal{S}_R f}{\tSob{s}} \apprle (1+R)^{s} \norm{f}{\NLeb{2}}$.
\item  If $f\in \tSob{s}$, then $\mathcal{S}_R f\rightarrow f$ in $\tSob{s}$ as $R\rightarrow \infty$
\item  If $f\in H^t$, then
$\norm{\mathcal{S}_R f -f}{\tSob{s}}\apprle (1+R)^{-(t-s )} \norm{f}{\tSob{t}}$.
\end{enumerate}
\end{lem}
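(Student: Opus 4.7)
The plan is to reduce everything to the Fourier side. By Plancherel, the inhomogeneous Sobolev norm satisfies
\[
\|g\|_{H^s}^2 = \int_{\mathbb{R}^d} (1+4\pi^2|\xi|^2)^s |\hat g(\xi)|^2\, d\xi,
\]
and $\widehat{\mathcal{S}_R f} = \chi_{B_R}\hat f$ by definition. This single identity will essentially reduce each of the three claims to a pointwise comparison of the weight $(1+4\pi^2|\xi|^2)^s$ on the regions $\{|\xi|\le R\}$ and $\{|\xi|>R\}$.

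For part (1), I would simply note that on $B_R$ the weight satisfies $(1+4\pi^2|\xi|^2)^s \le (1+4\pi^2 R^2)^s$, so
\[
\|\mathcal{S}_R f\|_{H^s}^2 \le (1+4\pi^2 R^2)^s \int_{B_R}|\hat f(\xi)|^2\, d\xi \le (1+4\pi^2 R^2)^s \|f\|_2^2,
\]
where the last step uses Plancherel. Taking square roots and absorbing constants depending on $s$ gives the bound $\|\mathcal{S}_R f\|_{H^s}\lesssim (1+R)^s\|f\|_2$.

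For part (3), I would use the elementary inequality
\[
(1+4\pi^2|\xi|^2)^{s} \le (1+4\pi^2 R^2)^{-(t-s)} (1+4\pi^2|\xi|^2)^{t} \quad \text{for } |\xi|>R,
\]
which is immediate since $s<t$ and $|\xi|>R$. Since $\widehat{\mathcal{S}_R f-f} = (\chi_{B_R}-1)\hat f$ is supported outside $B_R$, Plancherel gives
\[
\|\mathcal{S}_R f-f\|_{H^s}^2 \le (1+4\pi^2 R^2)^{-(t-s)} \int_{|\xi|>R} (1+4\pi^2|\xi|^2)^{t}|\hat f(\xi)|^2\, d\xi \le (1+4\pi^2 R^2)^{-(t-s)}\|f\|_{H^t}^2,
\]
which yields the stated decay rate.

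For part (2), the argument is a standard density/tail argument. When $f\in H^s$, the integrand $(1+4\pi^2|\xi|^2)^s|\hat f(\xi)|^2$ is integrable on $\mathbb{R}^d$, and
\[
\|\mathcal{S}_R f-f\|_{H^s}^2 = \int_{|\xi|>R} (1+4\pi^2|\xi|^2)^s |\hat f(\xi)|^2\, d\xi \longrightarrow 0 \quad \text{as } R\to\infty
\]
by the dominated convergence theorem (or simply by absolute continuity of the integral). No main obstacle is expected: the whole argument is Plancherel plus pointwise weight comparisons, and the only mildly subtle point is keeping the asymmetric factors $(1+R)^s$ and $(1+R)^{-(t-s)}$ honestly bounded by allowing the implicit constant to depend on $s$ and $t$.
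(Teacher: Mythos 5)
Your proof is correct, and since the paper omits the argument entirely (calling these properties ``quite elementary''), your Plancherel-plus-pointwise-weight-comparison argument is exactly the intended one. The only constants you absorb are the harmless comparisons $(1+4\pi^2R^2)^{1/2}\approx 1+R$, so all three parts are fully justified.
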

For $R>0$, we denote by $V_{R}$ the space of all vector fields $\boldb \in \Leb{2}$ such that  $\supp \hat{\boldb}\subset \overline{B_R}$.  Note that $V_{R}$ is a closed subspace of $\Leb{2}$. Moreover, since $\mathcal{S}_R\boldb=\boldb$ for any $\boldb \in V_{R}$, it follows from Lemma  \ref{lem:truncation-operator} that $V_{R}$ is continuously embedded into $\tSob{s}$ for any $s\geq 0$.

\begin{lem}\label{lem:linear-heat}
Suppose that $\beta\geq 1$ and $\gamma=2\beta/(2\beta-1)$. Then for every $\boldF\in \Leb{\gamma}(0,T;\Leb{2})$ and $\boldb_0 \in \Leb{2}$, there exists a unique weak solution $\boldb$ of \eqref{eq:PFHE} with $\boldu=0$. Moreover, $\boldb$ belongs to $C([0,T];\Leb{2})$ and satisfies the energy identity:
\[
\norm{\boldb(t)}{\NLeb{2}}^2+2\int_0^t\norm{\Lambda^\beta \boldb(s)}{\NLeb{2}}^2 \, ds=-2\int_0^t \int_{\mathbb{R}^d} \boldF : \nabla \boldb \myd{x}ds + \norm{\boldb_0}{2}^2
\]
for   $0\leq t\leq T$.
\end{lem}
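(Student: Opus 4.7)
The plan is a Faedo--Galerkin scheme based on the Fourier truncation operators $\mathcal{S}_R$, combined with the interpolation inequality \eqref{eq:our-interpolation}. For each $R>0$, since $V_R \hookrightarrow \tSob{s}$ for every $s\ge 0$ (Lemma \ref{lem:truncation-operator}), the operator $\Lambda^{2\beta}$ is bounded on $V_R$, so the truncated problem
\[
\partial_t \boldb_R + \Lambda^{2\beta}\boldb_R = \mathcal{S}_R \Div \boldF, \qquad \boldb_R(0)=\mathcal{S}_R \boldb_0
\]
admits a unique solution $\boldb_R \in C^1([0,T];V_R)$, obtained either by linear ODE theory or explicitly by diagonalizing in Fourier variables. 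Testing with $\boldb_R$ and using that $\mathcal{S}_R$ is an orthogonal projection on $\Leb{2}$ yield
\[
\tfrac{1}{2}\tfrac{d}{dt}\norm{\boldb_R}{\NLeb{2}}^2 + \norm{\Lambda^\beta \boldb_R}{\NLeb{2}}^2 = -\int_{\mathbb{R}^d}\boldF : \nabla \boldb_R \myd{x},
\]
and by \eqref{eq:our-interpolation} combined with Young's inequality using the conjugate pair $(2\beta,\gamma)$, the right-hand side is absorbed into $\tfrac{1}{2}\norm{\Lambda^\beta \boldb_R}{\NLeb{2}}^2$ plus a term $C\norm{\boldF}{\NLeb{2}}^{\gamma}\norm{\boldb_R}{\NLeb{2}}^{\gamma(\beta-1)/\beta}$. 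Gronwall's inequality then yields bounds of $\{\boldb_R\}$ in $\Leb{\infty}(0,T;\Leb{2}) \cap \Leb{2}(0,T;\tSob{\beta})$ that are uniform in $R$.

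Weak-$*$ and weak compactness produce a subsequence $\boldb_{R_k}$ converging to some $\boldb$ weakly-$*$ in $\Leb{\infty}(0,T;\Leb{2})$ and weakly in $\Leb{2}(0,T;\tSob{\beta})$; since the equation is linear, passing to the limit in the distributional formulation requires no additional compactness and yields a weak solution. Uniqueness then follows from linearity together with the energy identity applied to the difference of two weak solutions, so it suffices to establish the energy identity itself.

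To prove the energy identity together with the time continuity in $\Leb{2}$, I would first verify from the equation that
\[
\partial_t \boldb = -\Lambda^{2\beta}\boldb + \Div \boldF \in \Leb{\gamma}(0,T;\thSob{-\beta}),
\]
using that $\beta\ge 1$ gives $\gamma \le 2$ and $\thSob{-1} \hookrightarrow \thSob{-\beta}$; moreover \eqref{eq:our-interpolation} yields $\nabla \boldb \in \Leb{2\beta}(0,T;\Leb{2})$, so the pairing $\action{\partial_t \boldb, \boldb}$ is well-defined in $\Leb{1}(0,T)$ via H\"older with exponents $(\gamma,2\beta)$. A standard mollification in time applied to both the equation and the solution, followed by passage to the limit using Fubini and the strong convergence of the mollifications, delivers
\[
\tfrac{1}{2}\tfrac{d}{dt}\norm{\boldb}{\NLeb{2}}^2 + \norm{\Lambda^\beta \boldb}{\NLeb{2}}^2 = -\int_{\mathbb{R}^d}\boldF : \nabla \boldb\myd{x}
\]
for almost every $t$, and a standard continuity lemma then upgrades this to the stated identity on all of $[0,T]$ together with $\boldb \in C([0,T];\Leb{2})$. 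The main technical obstacle is the asymmetry between the time exponents $\gamma$ and $2\beta$, which rules out the symmetric $\Leb{2}$--$\Leb{2}$ chain rule in a Gelfand triple and forces the mollification argument above.
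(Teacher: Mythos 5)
Your proposal is correct in substance, but it reaches the continuity and the energy identity by a genuinely different route than the paper. The paper never passes to a weak limit: after approximating $\boldF$ by functions continuous in time (so that the $C^1([0,T];V_k)$ solvability of the truncated problems is legitimate), it applies the energy estimate to the \emph{difference} $\boldb_k-\boldb_m$ of two approximations and concludes via Gronwall that $\{\boldb_k\}$ is Cauchy in $C([0,T];\Leb{2})\cap \Leb{2}(0,T;\tSob{\beta})$; strong convergence then gives $\boldb\in C([0,T];\Leb{2})$ for free and lets one pass the approximate energy identities to the limit (using \eqref{eq:our-interpolation} to get $\nabla\boldb_k\to\nabla\boldb$ in $\Leb{2\beta}(0,T;\Leb{2})$), while uniqueness is obtained by truncating an arbitrary zero-data weak solution with $\mathcal{S}_k$ and noting the truncations solve the zero-data problem in $V_k$. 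You instead take weak-$*$ limits and then prove the energy identity for an \emph{arbitrary} weak solution by a Lions--Magenes-type mollification-in-time argument, deducing uniqueness from it; this works, because each pairing is separately integrable in time ($\|\Lambda^\beta\boldb\|_{\NLeb{2}}^2\in\Leb{1}(0,T)$ and $\boldF:\nabla\boldb\in\Leb{1}$ by H\"older with exponents $(\gamma,2\beta)$ and \eqref{eq:our-interpolation}), and you correctly identify the asymmetry of exponents as the reason the symmetric Gelfand-triple chain rule is unavailable. Your route is somewhat more robust (it yields the energy identity for every weak solution, not only the constructed one), while the paper's Cauchy-sequence trick exploits linearity to avoid the mollification machinery and the attendant care at $t=0$ altogether. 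Two small slips to repair: the embedding $\thSob{-1}\hookrightarrow\thSob{-\beta}$ is false for homogeneous spaces when $\beta>1$ (dually, $\thSob{\beta}\not\hookrightarrow\thSob{1}$ because of low frequencies), so place $\partial_t\boldb$ in the inhomogeneous space $\Leb{\gamma}(0,T;\tSob{-s})$ with $s=\max\{\beta,1\}$, which is all the mollification argument needs; and with $\boldF$ merely $\Leb{\gamma}$ in time the truncated solution is absolutely continuous (via the explicit Duhamel/Fourier formula you mention), not $C^1([0,T];V_R)$ — the paper sidesteps this by first replacing $\boldF$ with continuous-in-time approximations $\boldF_k$.
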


\begin{proof}
Choose a sequence $\{\boldF_k\}$ in  $C ([0,T];\Leb{2})$ such that $\boldF_k \rightarrow \boldF$ in $\Leb{\gamma}(0,T;\Leb{2})$ and $\norm{\boldF_k(t)}{\NLeb{2}}\leq \norm{\boldF(t)}{\NLeb{2}}$ for all $t$ and $k$.  For each $k\in \mathbb{N}$, we consider the following truncated problem  for $\boldb_k \in C^1([0,T);V_k )$:
\begin{equation}\label{eq:PFHE-truncated}
\left\{
\begin{alignedat}{2}
	\partial_t \boldb_{k} +\Lambda^{2\beta}\boldb_{k} &=\Div  \mathcal{S}_k\boldF_k  &&\quad \text{in } \mathbb{R}^d\times (0,T) \\
	\boldb_{k}(\cdot,0) &=  \mathcal{S}_k \boldb_0 &&\quad \text{on } \mathbb{R}^d .
\end{alignedat}
\right.
\end{equation}
Since $\Lambda^{2\beta}:V_k \rightarrow V_k$ is a bounded linear operator, it follows from a standard theory of linear differential equations (see e.g. Cartan \cite[Theorem 1.9.1]{C71}) in Banach spaces that the   problem \eqref{eq:PFHE-truncated} has a unique solution $\boldb_{k} \in C^1([0,T];V_k )$.

Given $k,m\in \mathbb{N}$, write $\tilde{\boldb}=\boldb_k-\boldb_m$, $\tilde{\boldF}=\mathcal{S}_k \boldF_k -\mathcal{S}_m \boldF_m$, and $\tilde{\boldb}_0= \mathcal{S}_k \boldb_0- \mathcal{S}_m \boldb_0$. Then $\tilde{\boldb}$ satisfies
\begin{equation}\label{eq:PFHE-truncated-diff}
\left\{
\begin{alignedat}{2}
	\partial_t \tilde{\boldb} +\Lambda^{2\beta}\tilde{\boldb} &=\Div  \tilde{\boldF}  &&\quad \text{in } \mathbb{R}^d\times (0,T) \\
	\tilde{\boldb}(\cdot,0) &=\tilde{\boldb}_0 &&\quad \text{on } \mathbb{R}^d .
\end{alignedat}
\right.
\end{equation}
Multiplying \eqref{eq:PFHE-truncated-diff} by $\tilde{\boldb}$ and then integrating over $\mathbb{R}^d\times [0,t]$, we have
\[
	\frac{1}{2}\norm{\tilde{\boldb}(t)}{\NLeb{2}}^2 +\int_0^t \norm{\Lambda^\beta \tilde{\boldb}(s)}{\NLeb{2}}^2 \, \myd{s} =-\int_0^t\int_{\mathbb{R}^d} \tilde{\boldF} : \nabla \tilde{\boldb} \,\myd{x}ds + \frac{1}{2}\norm{\tilde{\boldb}_0}{\NLeb{2}}^2
\]
for all $0\leq t\leq T$. By Lemma \ref{lem:truncation-operator}, \eqref{eq:our-interpolation}, and Young's inequality, we have
\begin{align*}
- \int_{\mathbb{R}^d} \tilde{\boldF} : \nabla \tilde{\boldb}\myd{x} 
    &\leq C\norm{\tilde{\boldF}}{\NLeb{2}}
    \norm{\tilde{\boldb}}{\NLeb{2}}^{1-1/\beta}\norm{\Lambda^\beta \tilde{\boldb}}{\NLeb{2}}^{1/\beta}\\
    &\leq C\norm{\tilde{\boldF}}{\NLeb{2}}^{\gamma}
    \norm{\tilde{\boldb}}{\NLeb{2}}^{2(\beta-1)/(2\beta-1)}+\frac{1}{2}\norm{\Lambda^\beta \tilde{\boldb}}{\NLeb{2}}^2 .
\end{align*}
Since $0\leq {(\beta-1)}/{(2\beta-1)}<1$, it follows that
\begin{align*}
\norm{\tilde{\boldb}(t)}{\NLeb{2}}^2+\int_0^t \norm{\Lambda^\beta \tilde{\boldb}(s)}{\NLeb{2}}^2\myd{s}&\leq \norm{\tilde{\boldb}_0}{\NLeb{2}}^2+ C \int_0^t \norm{\tilde{\boldF}(s)}{\NLeb{2}}^\gamma \norm{\tilde{\boldb}(s)}{\NLeb{2}}^{2(\beta-1)/(2\beta-1)}\myd{s}\\
&\leq \norm{\tilde{\boldb}_0}{\NLeb{2}}^2 + C \int_0^t \norm{\tilde{\boldF}(s)}{\NLeb{2}}^\gamma \left(1+\norm{\tilde{\boldb}(s)}{\NLeb{2}}^{2}\right) ds
\end{align*}
for all $0\leq t\leq T$.  Hence by Gronwall's inequality, we get
\begin{align*}	
 \sup_{0\leq t\leq T}\norm{\boldb_k(t)-\boldb_m(t)}{\NLeb{2}}^2 &+\int_0^T \norm{\Lambda^\beta (\boldb_k-\boldb_m)(t)}{\NLeb{2}}^2 \myd{t} \\
&  \leq \left(\|\mathcal{S}_k \boldb_0- \mathcal{S}_m \boldb_0 \|_2^2 +  C\int_0^T \norm{\mathcal{S}_k \boldF_k(t) -\mathcal{S}_m\boldF_m(t)}{\NLeb{2}}^\gamma \myd{t}\right) \\
& \quad \times \exp\left(C\int_0^T \norm{\mathcal{S}_k \boldF_k(t) -\mathcal{S}_m\boldF_m(t)}{\NLeb{2}}^\gamma \myd{t}\right)
\end{align*}
for some constant $C$ depending only on $d$, $\beta$, and $T$.  
By the dominated convergence theorem, we easily show that $\mathcal{S}_k \boldF_k\rightarrow \boldF$ in $\Leb{\gamma}(0,T;\Leb{2})$.   Therefore, the sequence $\{\boldb_k\}$ converges in $C([0,T];\Leb{2})\cap \Leb{2}(0,T;\tSob{\beta})$ to a function $\boldb$. It is easy  to check that $\boldb$ is a weak solution of \eqref{eq:PFHE} with $\boldu=0$. To show that $\boldb$ satisfies the energy identity, we multiply \eqref{eq:PFHE-truncated} by $\boldb_k$ to obtain
\begin{equation}\label{eq:approximation-B}
	\frac{1}{2}\norm{{\boldb}_k(t)}{\NLeb{2}}^2 +\int_0^t \norm{\Lambda^\beta  {\boldb}_k(s)}{\NLeb{2}}^2 \, \myd{s} =-\int_0^t\int_{\mathbb{R}^d} \mathcal{S}_k\boldF_k : \nabla {\boldb}_k \myd{x}ds + \frac{1}{2}\norm{\mathcal{S}_k\boldb_0}{\NLeb{2}}^2 .
\end{equation}
By the interpolation inequality \eqref{eq:our-interpolation}, we get
\begin{align*}
\int_0^T \norm{\nabla (\boldb_k-\boldb)}{\NLeb{2}}^{2\beta}\myd{t}\leq C\int_0^T \norm{\boldb_k-\boldb}{\NLeb{2}}^{2(\beta-1)}\norm{\Lambda^\beta (\boldb_k-\boldb)}{\NLeb{2}}^2 \myd{t}\rightarrow 0.
\end{align*}
Hence the energy identity follows from \eqref{eq:approximation-B} by letting $k\rightarrow \infty$.

Finally, to show the uniqueness, let $\boldb$ be a weak solution of \eqref{eq:PFHE} with $(\boldu,\boldF)=(\mathbf{0},\mathbf{0})$. Then the truncations $\boldb_k=\mathcal{S}_k\boldb$ satisfy
\[ 
\left\{
\begin{alignedat}{2}
	\partial_t \boldb_{k} +\Lambda^{2\beta}\boldb_{k} &=\mathbf{0}  &&\quad \text{in } \mathbb{R}^d\times (0,T), \\
	\boldb_{k}(\cdot,0) &=\mathbf{0}&&\quad \text{on } \mathbb{R}^d .
\end{alignedat}
\right.
\] 
It is easy to see that $\boldb_k$ is identically zero. Since $\boldb_k \to \boldb$ in $L_2 (0,T; L_2 )$, we conclude that  $\boldb$ is identically zero. This completes the proof of Lemma \ref{lem:linear-heat}.
\end{proof}

\begin{lem}\label{lem:div-free}
Let $\alpha$ and $\beta$ satisfy
\[	\frac{1}{2}<\alpha<\frac{d+1}{2},\quad \beta \geq 1,\quad \text{and}\quad \alpha+\beta\geq \frac{d}{2}+1.\]
\begin{enumerate}[label=\textnormal{(\arabic*)}]
\item For all $\boldu \in \Lor{d/(d+1-2\alpha)}{\infty}\cap \thSob{\alpha}$ and $\boldv,\boldw\in \tSob{\beta}$, we have
\begin{equation}\label{eq:trilinear-boundedness}
 \int_{\mathbb{R}^d} | \boldu | \, |  \boldv | \, | \nabla \boldw  | \myd{x}  \apprle (\norm{\boldu}{\NLor{d/(d+1-2\alpha)}{\infty}} +\norm{\boldu}{\thSob{\alpha}})\norm{\boldv}{\tSob{\beta}}\norm{\boldw}{\tSob{\beta}}.
\end{equation}
Moreover, if $\Div \boldu=0$, then
\begin{equation}\label{eq:trilinear}
\int_{\mathbb{R}^d} \boldu \otimes \boldv : \nabla \boldw \myd{x}=-\int_{\mathbb{R}^d} (\boldu\cdot \nabla)\boldv \cdot \boldw \myd{x}.
\end{equation}
\item If $\boldu\in \Leb{\infty}(0,T;\Lor{d/(d+1-2\alpha)}{\infty})\cap \Leb{2}(0,T;\thSob{\alpha})$ and $\boldb \in \Leb{\infty}(0,T;\Leb{2})\cap \Leb{2}(0,T;\thSob{\beta})$, then
\[\boldb \in L_{2\beta} (0,T; \dot{H}^1 ) \quad\mbox{and}\quad 	\boldu\otimes \boldb\in \Leb{2\beta /(2\beta-1)}(0,T;\Leb{2}).\]
 Moreover, if $\Div \boldu=0$, then
\begin{equation}\label{eq:div-free-u}
\int_{\mathbb{R}^d} \boldu (t) \otimes \boldb (t)  : \nabla \boldb (t)  \myd{x}=0\quad \text{for almost all } t\in (0,T)
\end{equation}
\end{enumerate}
\end{lem}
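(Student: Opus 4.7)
The plan is to reduce both parts to pointwise-in-time applications of Lemma~\ref{lem:product-estimate} and the interpolation bound \eqref{eq:our-interpolation}, with the refined form of Lemma~\ref{lem:product-estimate} (unlocked by $\alpha+\beta\geq d/2+1$) used to land in exactly the time exponent needed for Part~(2).

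For the trilinear bound in Part~(1), I would first apply Cauchy--Schwarz to get $\int |\boldu|\,|\boldv|\,|\nabla\boldw|\myd{x}\leq \norm{\boldu\boldv}{\NLeb{2}}\norm{\nabla\boldw}{\NLeb{2}}$. The second factor is dominated by $\norm{\boldw}{\tSob{\beta}}$ via \eqref{eq:our-interpolation}, valid since $\beta\geq 1$. For the first factor I would apply Lemma~\ref{lem:product-estimate} (whose hypothesis $\alpha+\beta>d/2$ holds a fortiori) and absorb the exponents $\theta_1,\theta_2\in(0,1)$ via the elementary inequality $a^{1-\theta}b^{\theta}\leq a+b$, obtaining a bound by $(\norm{\boldu}{\NLor{d/(d+1-2\alpha)}{\infty}}+\norm{\boldu}{\thSob{\alpha}})\norm{\boldv}{\tSob{\beta}}$. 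For the identity \eqref{eq:trilinear}, I would use the pointwise algebraic relation $\boldu\otimes\boldv:\nabla\boldw+(\boldu\cdot\nabla)\boldv\cdot\boldw=\boldu\cdot\nabla(\boldv\cdot\boldw)$; when $\boldv,\boldw\in C_c^\infty$ the right-hand side integrates to zero by the distributional statement $\Div\boldu=0$, and I would then extend to arbitrary $\boldv,\boldw\in \tSob{\beta}$ by density of $C_c^\infty$ in $\tSob{\beta}$, using \eqref{eq:trilinear-boundedness} to pass to the limit on both sides.

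For Part~(2), the inclusion $\boldb\in \Leb{2\beta}(0,T;\thSob{1})$ is immediate from \eqref{eq:our-interpolation} combined with H\"older in time: $\int_0^T \norm{\nabla\boldb}{\NLeb{2}}^{2\beta}\myd{t}\apprle \norm{\boldb}{\Leb{\infty}(0,T;\Leb{2})}^{2(\beta-1)}\int_0^T \norm{\Lambda^\beta\boldb}{\NLeb{2}}^2\myd{t}$. For $\boldu\otimes\boldb\in \Leb{2\beta/(2\beta-1)}(0,T;\Leb{2})$, I would apply Lemma~\ref{lem:product-estimate} pointwise in time with $\mu=0$ (permitted by $\alpha+\beta\geq d/2+1$), which yields exponents $\theta_1,\theta_2\in(0,1)$ satisfying $\theta_1+\theta_2+1/\beta\leq 2$. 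Since $\norm{\boldu}{\NLor{d/(d+1-2\alpha)}{\infty}}$ and $\norm{\boldb}{\NLeb{2}}$ lie in $\Leb{\infty}(0,T)$ while $\norm{\boldu}{\thSob{\alpha}}$ and $\norm{\boldb}{\thSob{\beta}}$ lie in $\Leb{2}(0,T)$, the pointwise bound places $\boldu\otimes\boldb$ in $\Leb{r}(0,T;\Leb{2})$ with $1/r=(\theta_1+\theta_2)/2$; the inequality $\theta_1+\theta_2\leq 2-1/\beta$ is exactly $r\geq 2\beta/(2\beta-1)$.

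Finally, the cancellation \eqref{eq:div-free-u} follows from Part~(1) applied at a.e.\ $t$ with $\boldv=\boldw=\boldb(t)$, combined with the elementary identity $\boldu\otimes\boldb:\nabla\boldb=(\boldu\cdot\nabla)\boldb\cdot\boldb$, which forces the two sides of \eqref{eq:trilinear} to be negatives of each other and hence both zero. The main technical obstacle is the exponent bookkeeping required for $\boldu\otimes\boldb\in \Leb{2\beta/(2\beta-1)}(0,T;\Leb{2})$: hitting precisely the exponent $2\beta/(2\beta-1)$ demands the constraint $\theta_1+\theta_2+1/\beta\leq 2$ in the refined form of Lemma~\ref{lem:product-estimate}, and this is the structural reason that the uniqueness theorem invokes the strengthened hypothesis $\alpha+\beta\geq d/2+1$ rather than the weaker $\alpha+\beta>d/2$ which sufficed for Part~(1).
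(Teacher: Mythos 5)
Your proposal is correct and follows essentially the same route as the paper: Cauchy--Schwarz plus Lemma \ref{lem:product-estimate} and the interpolation bound \eqref{eq:our-interpolation} for \eqref{eq:trilinear-boundedness}, a density argument in $C_c^\infty$ using $\int \boldu\cdot\nabla(\boldv\cdot\boldw)\,dx=0$ for \eqref{eq:trilinear}, the refined choice $\theta_1+\theta_2\le 2-1/\beta$ together with H\"older in time (and finiteness of $T$) for the $\Leb{2\beta/(2\beta-1)}(0,T;\Leb{2})$ membership, and the choice $\boldv=\boldw=\boldb(t)$ for \eqref{eq:div-free-u}. The only cosmetic difference is that the paper fixes the refined exponents $(\theta_1,\theta_2)$ once in Part (1) and reuses them in Part (2), whereas you invoke the refinement only where it is needed.
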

\begin{proof}
(1) Let  $\gamma =2\beta /(2\beta-1)$. Then by Lemma \ref{lem:product-estimate},
 there exist $0<\theta_1,\theta_2<1$ with $\theta_1+\theta_2\leq 2-1/\beta=2/\gamma$ such that
  \begin{equation}\label{eq:prod-3}
   \norm{| \boldu| \, | \boldv| }{\NLeb{2}}\apprle \norm{\boldu}{\NLor{d/(d+1-2\alpha)}{\infty}}^{1-\theta_1}\norm{\boldu}{\thSob{\alpha}}^{\theta_1}\norm{\boldv}{\NLeb{2}}^{1-\theta_2}\norm{\boldv}{\thSob{\beta}}^{\theta_2}.
   \end{equation}
By Young's inequality, we have
\begin{align*}
\int_{\mathbb{R}^d} | \boldu | \, |  \boldv | \, | \nabla \boldw  | \myd{x} &\leq \norm{| \boldu| \, | \boldv|}{\NLeb{2}}\norm{\nabla \boldw}{\NLeb{2}}\\
&\apprle (\norm{\boldu}{\NLor{d/(d+1-2\alpha)}{\infty}} +\norm{\boldu}{\thSob{\alpha}})\norm{\boldv}{\tSob{\beta}}\norm{\boldw}{\tSob{\beta}},
\end{align*}
which proves \eqref{eq:trilinear-boundedness}.

To show \eqref{eq:trilinear}, choose sequences $\{\boldv_k\}$ and $\{\boldw_k\}$ in $C_c^\infty$ satisfying $\boldv_k\rightarrow \boldv$ and $\boldw_k\rightarrow \boldw$ in $\tSob{\beta}$. Since $\Div \boldu=0$, it follows that
\[
0=\int_{\mathbb{R}^d} \boldu\cdot \nabla (\boldv_k \cdot \boldw_k)\myd{x}=\int_{\mathbb{R}^d} (\boldu\cdot \nabla)\boldv_k \cdot \boldw_k \myd{x}+\int_{\mathbb{R}^d} \boldu\otimes\boldv_k : \nabla \boldw_k \myd{x}
\]
for all $k$. Hence the identity \eqref{eq:trilinear} is derived by letting $k\rightarrow \infty$ and using \eqref{eq:trilinear-boundedness}.

(2) It immediately follows from (\ref{eq:our-interpolation}) that $\boldb \in L_{2\beta} (0,T; \dot{H}^1 )$. Let $1<r\leq \infty$ be defined by
  \[	\frac{1}{r} = 1-\frac{\gamma}{2}\left(\theta_1+\theta_2\right).\]
Then by \eqref{eq:prod-3} and H\"older's inequality,
\begin{align*}
\int_0^T\norm{\boldu\otimes \boldb}{\NLeb{2}}^\gamma \myd{t}&\apprle \int_0^T \norm{\boldu}{\NLor{d/(d+1-2\alpha)}{\infty}}^{\gamma(1-\theta_1)}\norm{\boldu}{\thSob{\alpha}}^{\gamma\theta_1}\norm{\boldb}{\NLeb{2}}^{\gamma(1-\theta_2)}\norm{\boldb}{\thSob{\beta}}^{\gamma\theta_2}\myd{t}\\
&\apprle \int_0^T \norm{\boldu}{\thSob{\alpha}}^{\gamma\theta_1}\norm{\boldb}{\thSob{\beta}}^{\gamma\theta_2}\myd{t}\\
&\apprle T^{1/r}\left(\int_0^T \norm{\boldu}{\thSob{\alpha}}^2 \myd{t}\right)^{\gamma \theta_1 /2} \left(\int_0^T \norm{\boldb}{\thSob{\beta}}^2 \myd{t} \right)^{\gamma \theta_2 /2}.
\end{align*}
Finally, the identity \eqref{eq:div-free-u} is obtained by taking $\boldv = \boldw=\boldb$ in \eqref{eq:trilinear}.  This completes the proof of Lemma \ref{lem:div-free}.
\end{proof}

\begin{thm}\label{thm:energy-equality}
Let $\alpha$ and $\beta$ satisfy
\[	\frac{1}{2}<\alpha<\frac{d+1}{2},\quad \beta \geq 1,\quad \text{and}\quad \alpha+\beta \geq \frac{d}{2}+1.\]
Suppose that
$\boldu \in \Leb{\infty}(0,T;\Lor{d/(d+1-2\alpha)}{\infty})\cap \Leb{2}(0,T;\thSob{\alpha})$ and $\Div \boldu=0$ on $\mathbb{R}^d \times (0,T)$. Then for every $\boldF\in \Leb{2\beta/(2\beta-1)}(0,T;\Leb{2})$ and $\boldb_0 \in \Leb{2}$,  there exists a unique weak solution $\boldb$ of  \eqref{eq:PFHE}. Moreover, the solution $\boldb$ belongs to $C([0,T];\Leb{2})$ and satisfies the energy identity:
\begin{equation}\label{eq:energy-identity-thm}
\norm{\boldb(t)}{\NLeb{2}}^2+2\int_0^t\norm{\Lambda^\beta \boldb(s)}{\NLeb{2}}^2 \, ds=-2\int_0^t \int_{\mathbb{R}^d} \boldF : \nabla \boldb   \myd{x}ds + \norm{\boldb_0}{\NLeb{2}}^2
\end{equation}
for   $0\leq t\leq T$.
\end{thm}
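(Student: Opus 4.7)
The plan is to obtain uniqueness and the energy identity \eqref{eq:energy-identity-thm} as short consequences of Lemma \ref{lem:linear-heat} combined with the trilinear cancellation \eqref{eq:div-free-u}, and then to construct a weak solution by a Fourier--Galerkin approximation. Throughout set $\gamma=2\beta/(2\beta-1)$; by Lemma \ref{lem:div-free}\,(2), any weak solution $\boldb$ of \eqref{eq:PFHE} automatically satisfies $\boldu\otimes\boldb\in\Leb{\gamma}(0,T;\Leb{2})$, so the convective term can be absorbed into an $\Leb{\gamma}(0,T;\Leb{2})$ forcing to which Lemma \ref{lem:linear-heat} applies.

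For uniqueness, let $\boldb_1,\boldb_2$ be two weak solutions and set $\boldw=\boldb_1-\boldb_2$. Then $\boldw$ solves $\partial_t \boldw+\Lambda^{2\beta}\boldw=\Div\tilde{\boldF}$ with $\tilde{\boldF}=-\boldu\otimes\boldw\in\Leb{\gamma}(0,T;\Leb{2})$ and zero initial datum, so Lemma \ref{lem:linear-heat} forces $\boldw\in C([0,T];\Leb{2})$ and
\[
  \norm{\boldw(t)}{\NLeb{2}}^2 + 2\int_0^t \norm{\Lambda^\beta \boldw(s)}{\NLeb{2}}^2\,ds
  = 2\int_0^t \int_{\mathbb{R}^d} (\boldu\otimes\boldw):\nabla\boldw\,dx\,ds.
\]
The right-hand side vanishes by \eqref{eq:div-free-u}, hence $\boldw\equiv 0$. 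The same reduction applied to an arbitrary weak solution $\boldb$, now with $\tilde{\boldF}=\boldF-\boldu\otimes\boldb$, produces $\boldb\in C([0,T];\Leb{2})$ together with the linear energy identity of Lemma \ref{lem:linear-heat}; \eqref{eq:energy-identity-thm} then drops out because $\int(\boldu\otimes\boldb):\nabla\boldb=0$ by \eqref{eq:div-free-u}.

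For existence, I would follow the Fourier--Galerkin scheme from Lemma \ref{lem:linear-heat}. Choose smooth, divergence-free spatial mollifications $\boldu_k$ of $\boldu$ together with $V_k$-valued approximations $\boldF_k\to\boldF$ in $\Leb{\gamma}(0,T;\Leb{2})$, and seek $\boldb_k\in C^1([0,T];V_k)$ solving the truncated problem
\[
  \partial_t \boldb_k + \Lambda^{2\beta}\boldb_k + \mathcal{S}_k\Div(\boldu_k\otimes\boldb_k) = \mathcal{S}_k \Div\boldF_k,\quad \boldb_k(0)=\mathcal{S}_k\boldb_0.
\]
On $V_k$ this is a linear ODE with bounded, time-measurable coefficients (all Sobolev norms being equivalent on $V_k$), so Carathéodory theory provides a global solution. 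Testing against $\boldb_k$, using $\Div\boldu_k=0$ and \eqref{eq:div-free-u} to annihilate the convective contribution, and arguing as in Lemma \ref{lem:linear-heat} for the remaining $\Leb{\gamma}(0,T;\Leb{2})$ forcing yields the uniform estimate
\[
  \sup_{0\le t\le T}\norm{\boldb_k(t)}{\NLeb{2}}^2 + \int_0^T\norm{\Lambda^\beta \boldb_k(t)}{\NLeb{2}}^2\,dt
  \le C\bigl(\norm{\boldb_0}{\NLeb{2}}^2+\norm{\boldF}{\Leb{\gamma}(0,T;\Leb{2})}^\gamma\bigr).
\]
Lemma \ref{lem:div-free}\,(2) then bounds $\boldu_k\otimes\boldb_k$ uniformly in $\Leb{\gamma}(0,T;\Leb{2})$, and consequently $\partial_t\boldb_k$ uniformly in $\Leb{\gamma}(0,T;\thSob{-\beta}_{\loc})$. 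A standard Aubin--Lions compactness argument extracts a subsequence converging strongly in $\Leb{2}(0,T;\Leb{2}(K))$ for every compact $K\subset\mathbb{R}^d$, weakly in $\Leb{2}(0,T;\thSob{\beta})$, and weakly-$*$ in $\Leb{\infty}(0,T;\Leb{2})$, and the product estimate of Lemma \ref{lem:product-estimate} justifies passage to the limit in $\boldu_k\otimes\boldb_k$, yielding a weak solution of \eqref{eq:PFHE}.

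The main obstacle I anticipate is the limit passage in the convective term: because $\boldu$ only lies in the weak Lorentz space $\Lor{d/(d+1-2\alpha)}{\infty}$ and need not be square-integrable, $\boldu_k\otimes\boldb_k$ cannot be controlled by naive H\"older or Sobolev estimates and must be handled quantitatively through Lemma \ref{lem:product-estimate}. The hypothesis $\alpha+\beta\ge d/2+1$ enters at exactly this point: it permits the interpolation exponents $\theta_1,\theta_2$ in Lemma \ref{lem:product-estimate} to satisfy $\theta_1+\theta_2\le 2-1/\beta$, which is what places $\boldu\otimes\boldb$ in $\Leb{\gamma}(0,T;\Leb{2})$ with $\gamma=2\beta/(2\beta-1)$, matching the integrability required by Lemma \ref{lem:linear-heat}.
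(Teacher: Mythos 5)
Your treatment of continuity, the energy identity, and uniqueness is exactly the paper's: rewrite \eqref{eq:PFHE} with forcing $\tilde{\boldF}=\boldF-\boldu\otimes\boldb$ (or $-\boldu\otimes\boldw$ for the difference), note via Lemma \ref{lem:div-free} that $\tilde{\boldF}\in\Leb{2\beta/(2\beta-1)}(0,T;\Leb{2})$ and that the convective contribution to the energy balance vanishes by \eqref{eq:div-free-u}, and invoke Lemma \ref{lem:linear-heat}. Your existence argument, however, takes a genuinely different route. The paper explicitly avoids the spectral-truncation scheme because the truncated problems are non-autonomous with coefficients only square-integrable in time, and instead runs a classical Galerkin method on a countable basis $\{\Phi_j\}\subset\mathscr{S}$: the finite-dimensional system \eqref{eq:system} has $A\in\Leb{2}(0,T)$ and $\bm{f}\in\Leb{2\beta/(2\beta-1)}(0,T)$, so scalar Carath\'eodory theory applies, and since the equation is linear in $\boldb$ the limit is taken with weak-$*$ compactness alone (for fixed $\Phi$, the functional $\boldb\mapsto\int_0^T\int(\boldu\otimes\boldb):\nabla\Phi\,dx\,dt$ is bounded on $\Leb{2}(0,T;\tSob{\beta})$ by Lemma \ref{lem:product-estimate}). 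You keep the truncation but mollify the drift, which makes the coefficient operator on $V_k$ bounded uniformly in time and restores solvability of the truncated ODE; the price is two extra verifications: (i) that the mollifications $\boldu_k$ stay uniformly bounded in $\Leb{\infty}(0,T;\Lor{d/(d+1-2\alpha)}{\infty})\cap\Leb{2}(0,T;\thSob{\alpha})$ and converge to $\boldu$ in $\Leb{2}(0,T;\thSob{\alpha})$ --- note that mollification need not converge in the $\Lor{d/(d+1-2\alpha)}{\infty}$ norm, so in the term $(\boldu_k-\boldu)\otimes\boldb_k$ you must use Lemma \ref{lem:product-estimate} with mere boundedness of the Lorentz factor and convergence of the $\thSob{\alpha}$ factor, while for $\boldu\otimes(\boldb_k-\boldb)$ local square-integrability of $\boldu$ follows from Proposition \ref{prop:upper-critical} as in Section \ref{sec:proofs}; and (ii) the Aubin--Lions step, which the paper's linear-in-$\boldb$ weak-$*$ passage renders unnecessary but which is legitimate given your $\partial_t\boldb_k$ bound. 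Both routes are sound: the paper's gives a shorter limit passage, yours avoids introducing a separate basis and reuses the truncation machinery of Lemma \ref{lem:linear-heat}.
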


\begin{proof}  We first show that  every weak solution $\boldb$ of \eqref{eq:PFHE} belongs to $  C([0,T];\Leb{2})$ and satisfies the energy identity \eqref{eq:energy-identity-thm}. Let $\boldb$ be a weak solution of \eqref{eq:PFHE} with $\boldF\in \Leb{2\beta/(2\beta-1)}(0,T;\Leb{2})$. Then $\boldb$ satisfies
\[
\left\{
\begin{alignedat}{2}
\partial_t \boldb +\Lambda^{2\beta} \boldb &= \Div \tilde{\boldF}&&\quad \text{in } \mathbb{R}^d\times (0,T),\\
\boldb( \cdot , 0) &=\boldb_0 &&\quad \text{on } \mathbb{R}^d  ,
\end{alignedat}
\right.
\]
where  $\tilde{\boldF}=\boldF-\boldu\otimes \boldb$. By Lemma \ref{lem:div-free}, we have
\[	\tilde{\boldF}\in \Leb{2\beta/(2\beta-1)}(0,T;\Leb{2})\quad \text{and}\quad \int_{\mathbb{R}^d} \tilde{\boldF}: \nabla \boldb\myd{x}=\int_{\mathbb{R}^d} \boldF :\nabla \boldb \myd{x}. \]
Hence it follows from Lemma \ref{lem:linear-heat} that $\boldb$ belongs to $ C([0,T];\Leb{2})$ and satisfies the energy identity \eqref{eq:energy-identity-thm}. 

Uniqueness of weak solutions of \eqref{eq:PFHE} is an  immediate consequence of  the energy identity. Indeed, if   $\boldb$ is a weak solution of \eqref{eq:PFHE} with $(\boldF , \boldb_0 ) =(\mathbf{0}, \mathbf{0})$, then since it satisfies the energy identity, we have
\[	\norm{\boldb(t)}{\NLeb{2}}^2+2\int_0^t \norm{\Lambda^\beta \boldb(s)}{\NLeb{2}}^2 \, \myd{s}=0 \]
for all $0\leq t\leq T$, which implies that $\boldb$ is identically zero.

To complete the proof, it remains to   prove existence of a weak solution of \eqref{eq:PFHE}.  To prove this, we may consider truncated problems as in the proof of Lemma \ref{lem:linear-heat}. The resulting truncated problems are still linear but non-autonomous with coefficients being singular in time. To avoid this difficulty, we   apply the Galerkin approximation method.  Choose a sequence $\{\Phi_k\}$ in $\mathscr{S}$ such that the span of $\{\Phi_k\}$ is dense in $\tSob{\beta}$ and $\{\Phi_k\}$ is orthonormal in $\Leb{2}$.   Then for a fixed $m\in \mathbb{N}$, we seek a function $\boldb_m$ of the form
\[	\boldb_m(t)=\sum_{k=1}^m c_k^m(t) \Phi_k \]
that satisfies
\[	\boldb_m(0)=\sum_{k=1}^m \left(\boldb_0 , \Phi_j \right)  \]
and
\begin{equation}\label{eq:ODE-Galerkin}
\left(\partial_t\boldb_m (t) ,\Phi_j \right)+ \left(\Lambda^\beta \boldb_m (t) ,\Lambda^\beta \Phi_j \right)- \left( \boldu(t) \otimes \boldb_m (t),\nabla \Phi_j \right)=- \left( \boldF (t),\nabla \Phi_j \right)
\end{equation}
for $1\leq j\leq m$. Equivalently, since $\{\Phi_k\}$ is orthonormal in $\Leb{2}$,  we  look for a vector function   $\boldc^m =(c_1^m,\dots,c_m^m): [0,T] \to \mathbb{R}^m$  satisfying
\begin{equation}\label{eq:system}
	\frac{d}{dt} \bm{c}^m(t)+ A(t) \bm{c}^m (t) = \bm{f}(t)\quad\mbox{and}\quad \bm{c}^m (0)=\bm{c}_0  ,
\end{equation}
where $A (t) = [ A_{jk}(t)]_{1\le j,k\le m}$, $\bm{f}(t) = ( f_1 (t) , \ldots , f_m (t))$, and $\bm{c}_0  = ( c_{0,1} , \ldots , c_{0,m} )$ are defined by
\[	A_{jk}(t)= \left( \Lambda^\beta \Phi_k ,  \Lambda^\beta \Phi_j \right) - \left(  \boldu (t) \otimes\Phi_k , \nabla \Phi_j \right), \]
\[	f_j(t)=- \left(  \boldF (t) , \nabla \Phi_j \right)  , \quad\mbox{and}\quad c_{0,j}= \left(\boldb_0 , \Phi_j \right) \]
for $1 \le j,k \le m$.
By Lemma \ref{lem:div-free}, we have
\[	|A(t)|\apprle \left(1+\norm{\boldu(t)}{\NLor{d/(d+1-2\alpha)}{\infty}}+\norm{\boldu(t)}{\thSob{\alpha}}\right) \norm{  \Phi_k}{H^\beta}\norm{ \Phi_j}{H^\beta} , \]
which implies that  $A \in \Leb{2}(0,T)$. Moreover, since $\bm{f} \in \Leb{2\beta/(2\beta-1)}(0,T)$, it follows from  a standard theory of ordinary differential equations (ODEs) that there exists a unique absolutely continuous function    $\boldc^m : [0,T] \to \mathbb{R}^m$ satisfying   \eqref{eq:system}.

Multiplying \eqref{eq:ODE-Galerkin} by $c_j^m$ and taking summation over $1\leq j\leq m$, we get
\[
\frac{1}{2} \frac{d}{dt}\norm{\boldb_m}{\NLeb{2}}^2+\norm{\Lambda^\beta \boldb_m}{\NLeb{2}}^2-\left(\boldu\otimes \boldb_m,\nabla \boldb_m \right)=- \left( \boldF,\nabla \boldb_m \right).
\]
By Lemma \ref{lem:div-free}, \eqref{eq:our-interpolation}, and Young's inequality,
$$
\left( \boldu \otimes \boldb_m,\nabla \boldb_m \right)=0
$$
and
\begin{align*}
-\left(\boldF,\nabla \boldb_m \right)
&\leq C \norm{\boldF}{\NLeb{2}}\norm{\boldb_m}{\NLeb{2}}^{1-1/\beta} \norm{\Lambda^\beta \boldb_m}{\NLeb{2}}^{1/\beta}\\
&\leq C \norm{\boldF}{\NLeb{2}}^{\gamma} \norm{\boldb_m}{\NLeb{2}}^{2(\beta-1)/(2\beta-1)}+\frac{1}{2}\norm{\Lambda^\beta \boldb_m}{\NLeb{2}}^2.
\end{align*}
Since $0 \le (\beta-1)/(2\beta -1)< 1$, we have
\begin{align*}
 \frac{d}{dt} \norm{\boldb_m}{\NLeb{2}}^2+ \norm{\Lambda^\beta \boldb_m}{\NLeb{2}}^2&\leq 2C \norm{\boldF}{\NLeb{2}}^{\gamma} \left(1+ \norm{\boldb_m}{\NLeb{2}}^{2}\right).
\end{align*}
Hence, by Gronwall's inequality,
\begin{align*}
\sup_{0<t<T}\norm{\boldb_m(t)}{\NLeb{2}}^2+\int_0^T\norm{\Lambda^\beta \boldb_m(t)}{\NLeb{2}}^2 \, \myd{t}\leq \norm{\boldb_0}{\NLeb{2}}^2 \,  \exp \left(C\int_0^T \norm{\boldF (t) }{\NLeb{2}}^\gamma \, \myd{t}\right)
\end{align*}
for some constant $C$ depending  only on $d$, $\beta$, and $T$.  This shows that  the sequence $\{\boldb_m\}$ is uniformly bounded in $\Leb{\infty}(0,T;\Leb{2})\cap \Leb{2}(0,T;\tSob{\beta})$. Therefore, by weak-* compactness, there exist a   subsequence $\{\boldb_{m_k}\}$ and a function $\boldb$ such that
\[	\boldb_{m_k}\rightarrow \boldb \quad \text{weakly-* in } \Leb{\infty}(0,T;\Leb{2})\cap \Leb{2}(0,T;\tSob{\beta}). \]
It is quite standard to show that $\boldb$ is a weak solution of \eqref{eq:PFHE}.
The proof of Theorem \ref{thm:energy-equality} is complete. 
\end{proof}

\section{Proof of Theorem \ref{thm:GWP}}\label{sec:proofs}
This section is fully devoted to proving Theorem \ref{thm:GWP}.
Throughout the remaining sections, we assume that $\nu=\eta=1$ for simplicity. 

\subsection{Approximation of the problem by truncations}

Let $\Proj$ be the Leray projection defined by
\[ \widehat{\Proj (\boldu)}(\xi)  =  \left(I - \frac{\xi \otimes  \xi}{|\xi|^2} \right) \widehat{\boldu} (\xi)  .  \]
It is well-known that the Leray projection $\Proj$ has the following  properties:
\begin{enumerate}[label=\textnormal{(\arabic*)}]
\item $\Div   \Proj(\boldu) =0$ for all $\boldu \in \mathscr{S}$;
\item $\Proj(\nabla \phi)=0$ for all $\phi \in \mathscr{S}$;
\item $\Proj(\boldu)=\boldu$ for all $\boldu \in \mathscr{S}$ with $\Div \boldu=0$;
\item    $\Proj[\Proj(\boldu)]=\Proj(\boldu)$ for all $\boldu \in \mathscr{S}$;
\item  $\Proj$ is   bounded   from  $\Leb{q}$ to itself for $1<q<\infty$.
\end{enumerate}
Note also that $ \Proj \mathcal{S}_R (\boldu ) \in  V_{R,\sigma}$ for all  $\boldu \in L_2$, where $V_{R,\sigma}$ is the space of all vector fields $\boldv \in V_R$ such that $\Div \boldv=0$.

\medskip
The following lemma is necessary to construct   approximate solutions of \eqref{eq:MHD-relaxed}.
\begin{lem}\label{lem:b_R and u_R} Let $1/2 < \alpha < (d+1)/2$. Given    $\boldb  \in \Leb{2}$, we define
\[
\boldu  = \mathbf{U}_\alpha * (\boldb \otimes \boldb ),
\]
where  $\mathbf{U}_{\alpha} = [U_{\alpha, j}^{kl}]_{1 \le j,k,l \le d}$ is   defined by \eqref{eq:Stokes-Green}. Then
\begin{equation}\label{eq:sol-estimate-bob-R}
\boldu   \in  \Lor{d/(d+1-2\alpha)}{\infty}
\quad\mbox{and}\quad
\norm{\boldu }{\NLor{d/(d+1-2\alpha)}{\infty}}\apprle  \norm{\boldb  }{\NLeb{2}}^2   .
\end{equation}
Moreover, if $\boldb \in V_R$, then for any $\gamma \in \mathbb{N}_0^d$ and $d/(d+1-2\alpha) < q< \infty$,
\begin{equation}\label{eq:sol-estimate-R-s}
D^\gamma \boldu \in L_q , \quad \norm{D^\gamma \boldu }{q}\apprle_R  \norm{\boldb  }{\NLeb{2}}^2,
\end{equation}
and
\begin{equation}\label{eq:sol-estimate-u0b-R-s}
\norm{D^\gamma (\boldu \otimes \boldb ) }{\NLeb{2}}+ \norm{D^\gamma (\boldb \otimes \boldu )}{\NLeb{2}}\apprle_R  \norm{\boldb  }{\NLeb{2}}^3.
\end{equation}
\end{lem}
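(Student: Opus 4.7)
The plan is to reduce everything to the pointwise decay estimate $|\mathbf{U}_\alpha(x)| \le C|x|^{-(d+1-2\alpha)}$, combined in the second part with the spectral localization $\boldb \in V_R$, which upgrades $\Leb{2}$ control on $\boldb$ to $\Leb{p}$ control on every derivative $D^\gamma \boldb$ for all $p \in [2,\infty]$.

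For \eqref{eq:sol-estimate-bob-R}, the decay of $\mathbf{U}_\alpha$ gives $\mathbf{U}_\alpha \in \Lor{d/(d+1-2\alpha)}{\infty}$, while the trivial bound $\norm{\boldb \otimes \boldb}{\NLeb{1}}\le \norm{\boldb}{\NLeb{2}}^2$ delivers the $\Leb{1}$ factor. The weak Young convolution inequality (Lemma \ref{lem:Young} with $p_1 = d/(d+1-2\alpha)$ and $p_2 = 1$) then yields the claim immediately.

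Now suppose $\boldb \in V_R$. The first step is to record the Bernstein-type bounds: since $\widehat{\boldb} \in \Leb{2}$ has support in $\overline{B_R}$, Cauchy--Schwarz gives $\norm{\widehat{\boldb}}{\NLeb{1}} \apprle_R \norm{\boldb}{\NLeb{2}}$, hence $\norm{\boldb}{\NLeb{\infty}} \apprle_R \norm{\boldb}{\NLeb{2}}$; applying the same reasoning to $D^\gamma \boldb$, whose Fourier transform $(2\pi i\xi)^\gamma\widehat{\boldb}$ remains supported in $\overline{B_R}$, produces $\norm{D^\gamma \boldb}{\NLeb{p}} \apprle_R \norm{\boldb}{\NLeb{2}}$ for every multi-index $\gamma$ and every $p \in [2,\infty]$. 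Combined with the Leibniz rule and H\"older's inequality, this also yields $\norm{D^\gamma(\boldb \otimes \boldb)}{\NLeb{p}} \apprle_R \norm{\boldb}{\NLeb{2}}^2$ for every $\gamma$ and every $p \in [1,\infty]$.

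To obtain \eqref{eq:sol-estimate-R-s} I would first commute the derivative through the convolution to write $D^\gamma \boldu = \mathbf{U}_\alpha * D^\gamma (\boldb \otimes \boldb)$, which is legitimate since $\boldb \otimes \boldb$ is smooth with compactly supported Fourier transform. Then split $\mathbf{U}_\alpha$ into its restriction to $B_1$, which lies in $\Leb{1}$ thanks to $d+1-2\alpha < d$ (i.e. $\alpha > 1/2$), and to its complement, which lies in $\Leb{\infty}$ thanks to $d+1-2\alpha > 0$ (i.e. $\alpha < (d+1)/2$). The two uses of Young's inequality bound $\norm{D^\gamma \boldu}{\NLeb{\infty}}$ by $\norm{D^\gamma(\boldb\otimes\boldb)}{\NLeb{\infty}}+\norm{D^\gamma(\boldb\otimes\boldb)}{\NLeb{1}} \apprle_R \norm{\boldb}{\NLeb{2}}^2$, while the argument of the second paragraph, applied with $D^\gamma(\boldb \otimes \boldb)$ in place of $\boldb\otimes\boldb$, gives $\norm{D^\gamma \boldu}{\NLor{d/(d+1-2\alpha)}{\infty}} \apprle_R \norm{\boldb}{\NLeb{2}}^2$. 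Real interpolation between these two endpoints delivers \eqref{eq:sol-estimate-R-s} for every $q \in (d/(d+1-2\alpha), \infty)$. Finally, \eqref{eq:sol-estimate-u0b-R-s} follows by expanding $D^\gamma(\boldu \otimes \boldb)$ via the Leibniz rule and applying H\"older's inequality with the pair $(\infty,2)$, combining the $\Leb{\infty}$ bound for $D^{\gamma_1}\boldu$ just obtained with the Bernstein $\Leb{2}$ bound for $D^{\gamma_2}\boldb$; the same estimate for $\boldb \otimes \boldu$ is identical. No single step is particularly delicate; the only point deserving attention is the two-sided interval $0<d+1-2\alpha<d$, which is precisely where both halves of $1/2<\alpha<(d+1)/2$ are used (for the near-field $\Leb{1}$ bound and the far-field $\Leb{\infty}$ bound on $\mathbf{U}_\alpha$, respectively).
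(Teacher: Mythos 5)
Your proof is correct and follows essentially the same route as the paper: weak-type Young's inequality (Lemma \ref{lem:Young}) for \eqref{eq:sol-estimate-bob-R}, Bernstein-type bounds coming from the spectral support of $\boldb$ in $\overline{B_R}$ to get $\|D^\gamma(\boldb\otimes\boldb)\|_{\Leb{1}\cap\Leb{\infty}}\apprle_R\|\boldb\|_{2}^2$, and then convolution and H\"older estimates for \eqref{eq:sol-estimate-R-s} and \eqref{eq:sol-estimate-u0b-R-s}. The only (harmless) mechanical differences are that for \eqref{eq:sol-estimate-R-s} the paper applies the weak Young inequality directly with an intermediate exponent $1<p_2<\infty$ instead of your near/far splitting of $\mathbf{U}_\alpha$ plus interpolation, and for \eqref{eq:sol-estimate-u0b-R-s} it uses H\"older with a pair of finite conjugate exponents rather than your $(\infty,2)$ pairing based on the auxiliary $\Leb{\infty}$ bound on $D^\gamma\boldu$.
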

\begin{proof} Set  $\boldF  = \boldb \otimes \boldb $. Then since $\boldF  \in L_1$ and $\mathbf{U}_{\alpha}\in  \Lor{d/(d+1-2\alpha)}{\infty}$, the estimate (\ref{eq:sol-estimate-bob-R}) immediately follows from Lemma   \ref{lem:Young}. Suppose in addition that $\boldb \in V_R$. Since $V_R \hookrightarrow \tSob{s}$ and $\tSob{d+s}\hookrightarrow   L_\infty$  for any $s > 0$, it follows from H\"{o}lder's inequality that
\[
 \| D^\gamma \boldF  \|_{L_1 \cap L_\infty} \apprle_R  \norm{\boldb  }{\NLeb{2}}^2  \quad\mbox{for all}\,\, \gamma \in {\mathbb N}_0^d .
\]
Let $\gamma \in {\mathbb N}_0^d$ be fixed. Then since $D^\gamma \boldu  = \mathbf{U}_\alpha *  D^\gamma \boldF$,
it follows from Lemma   \ref{lem:Young} that
\[
 \| D^\gamma \boldu \|_{q} \apprle   \| D^\gamma \boldF  \|_{L_1 \cap L_\infty} \apprle_R \norm{\boldb  }{\NLeb{2}}^2
\]
for any $d/(d+1-2\alpha) < q< \infty$. Finally, since there exist $p$ and $q$ such that
\[
2< p< \infty , \quad \frac{d}{d+1-2\alpha} < q < \infty , \quad\mbox{and}\quad \frac{1}{2} = \frac{1}{p}+\frac{1}{q},
\]
the estimate  (\ref{eq:sol-estimate-u0b-R-s}) is easily deduced from (\ref{eq:sol-estimate-R-s}) by using H\"{o}lder's inequality.
This completes the proof of Lemma \ref{lem:b_R and u_R}.
\end{proof}

By virtue of Lemma \ref{lem:b_R and u_R}, we may consider the following
  truncated  problem  for $\boldb_R \in C^1([0,\infty);V_{R,\sigma})$:
\begin{equation}\label{eq:MHD-relaxed-approx}
\left\{
\begin{alignedat}{2}
\partial_t \boldb_R + \Lambda^{2\beta} \boldb_R &=\Proj\mathcal{S}_R[\Div\dsp (\boldb_R\otimes \boldu_R-\boldu_R\otimes \boldb_R)]  &&\quad \text{in }\mathbb{R}^d\times(0,\infty),\\
\boldu_R & = \mathbf{U}_\alpha * (\boldb_R\otimes \boldb_R)&&\quad \text{in }\mathbb{R}^d\times(0,\infty),\\
\boldb_R(\cdot,0)&=\Proj\mathcal{S}_R\boldb_0  &&\quad \text{on } \mathbb{R}^d ,
\end{alignedat}
\right.
\end{equation}
where $\boldb_0 \in \Leb{2}$ with ${\rm div}  \,\boldb_0=0$ is given. Note here  that if $\boldb_R \in C^1([0,T);V_{R,\sigma})$, $0<T \le \infty$,  and  $\boldu_R = \mathbf{U}_\alpha * (\boldb_R\otimes \boldb_R)$, then  by  Lemma \ref{lem:b_R and u_R},
$$
\boldu_R  \in C([0,T);\Lor{d/(d+1-2\alpha)}{\infty}  )
$$
and
\begin{equation}\label{eq:sol-estimate-R}
\norm{\boldu_R (t)}{\NLor{d/(d+1-2\alpha)}{\infty}}\apprle  \norm{\boldb_R(t)}{\NLeb{2}}^2 \quad\mbox{for}\,\, 0 \le t < T .
\end{equation}
Moreover, it follows from Theorem \ref{theorem:Frational Stokes-Lpdata-intersection} that for each $t \in (0,T)$, $\boldu_R (t) $ is a very weak solution  of the fractional Stokes system
\begin{equation}\label{eq:U-equation}
   \Lambda^{2\alpha} \boldu_R (t) +\nabla {p_*} (t) = \Div\dsp (\boldb_R (t)\otimes \boldb_R (t) ),\quad \Div \boldu_R (t) =0\quad\mbox{in}\,\,\mathbb{R}^d ;
\end{equation}
that is, $\boldu_R (t) $ satisfies
\begin{equation}\label{eq:weak-sol-R}
 \int_{\mathbb{R}^d}  \boldu_R(t) \cdot \left(\Lambda^{2\alpha} \Phi + \nabla \psi\right) dx   = - \int_{\mathbb{R}^d}\boldb_R (t)\otimes \boldb_R (t) : \nabla \Phi \myd{x}
\end{equation}
for all $( \Phi , \psi ) \in \mathscr{S}$ with $\Div \Phi=0$. We shall show that for each $R>0$, there exists a unique solution $\boldb_R \in C^1([0,\infty);V_{R,\sigma})$ of the $R$-truncated  problem (\ref{eq:MHD-relaxed-approx}) and then that some sequence $\{(\boldu_{R_k} , \boldb_{R_k} ) \}$ converges   to a weak solution of the original problem \eqref{eq:MHD-relaxed}.
 
\subsection{Existence of approximate solutions}

Let $0<R<\infty$ be fixed. For  $\boldb  \in V_{R,\sigma}$, we define
\[
G_R (\boldb  )  = -  \Lambda^{2\beta} \boldb  + \Proj\mathcal{S}_R[\Div\dsp(\boldb \otimes \boldu -\boldu \otimes \boldb)],
\]
where $\boldu  = \mathbf{U}_\alpha * (\boldb \otimes \boldb )$. By Lemma \ref{lem:b_R and u_R}, $G_R$ is a well-defined mapping from $V_{R,\sigma}$ into $V_{R,\sigma}$. Therefore, the $R$-truncated  problem (\ref{eq:MHD-relaxed-approx}) can be rewritten as the following Cauchy problem on an ODE in the Hilbert space  $V_{R,\sigma}$:
\begin{equation}\label{eq:MHD-relaxed-approx-ODE}
\left\{
\begin{alignedat}{2}
\partial_t \boldb_R (t)   &=G_R (\boldb_R (t) )    \quad (0< t< \infty),\\
\boldb_R( 0)&=\Proj\mathcal{S}_R\boldb_0 ,
\end{alignedat}
\right.
\end{equation}
where $\boldb_0 \in \Leb{2}$  with ${\rm div}  \,\boldb_0=0$ is given.

Suppose that   $\boldb^i \in V_{R,\sigma}$ and $\boldu^i = \mathbf{U}_\alpha * (\boldb^i \otimes \boldb^i ) $ for $i=1,2$. Then since  $V_{R,\sigma}\hookrightarrow \tSob{s}\hookrightarrow \thSob{s}$  for any $s \ge 0$, we have
\[
\| \Lambda^{2\beta} \boldb^1 -  \Lambda^{2\beta} \boldb^2 \|_2  \apprle_R \| \boldb^1 -   \boldb^2 \|_2 .
\]
Following the proof of Lemma \ref{lem:b_R and u_R}, we also obtain
\begin{align*}
&\norm{ \Div \dsp ( \boldb^1\otimes \boldu^1  -\boldb^2\otimes \boldu^2 )}{\NLeb{2}}\\
&\quad \le \norm{ \Div\dsp  ( (\boldb^1 -\boldb^2 )\otimes \boldu^1   )}{\NLeb{2}} + \norm{ \Div\dsp  ( \boldb^2\otimes (\boldu^1  - \boldu^2 ) )}{\NLeb{2}}\\
&\quad \apprle_R \|\boldb^1 -\boldb^2 \|_2 \|\boldb^1   \|_2^2 + \|\boldb^2   \|_2 \left(\|\boldb^1 \|_2 +\|\boldb^2 \|_2  \right)\|\boldb^1 -\boldb^2 \|_2
\end{align*}
and
\begin{align*}
&\norm{ \Div\dsp( \boldu^1\otimes \boldb^1  -\boldu^2\otimes \boldb^2 )}{\NLeb{2}}\\
&\quad \le \norm{ \Div\dsp  ( (\boldu^1 -\boldu^2 )\otimes \boldb^1   )}{\NLeb{2}} + \norm{ \Div \dsp ( \boldu^2\otimes (\boldb^1  - \boldb^2 ) )}{\NLeb{2}}\\
&\quad \apprle_R \left(\|\boldb^1 \|_2 +\|\boldb^2 \|_2  \right) \|\boldb^1 -\boldb^2 \|_2   \|\boldb^1   \|_2 + \|\boldb^2   \|_2^2 \|\boldb^1 -\boldb^2 \|_2 .
\end{align*}
Hence by the $L_2$-boundedness of $\Proj \mathcal{S}_R$, we conclude that
\begin{align*}
\|G_R (\boldb^1 ) -G_R (\boldb^2 ) \|_{2} & \apprle_R  \left(1+  \|\boldb^1 \|_2 +\|\boldb^2 \|_2 \right)^2 \| \boldb^1 -   \boldb^2 \|_2 ,
\end{align*}
which implies that $G_R$ is locally Lipschitz continuous on $V_{R,\sigma}$. Therefore   it follows from Picard's theorem for ODEs in infinite dimensional spaces that for every $\boldb_0 \in \Leb{2}$ with ${\rm div}  \,\boldb_0=0$, the $R$-truncated problem \eqref{eq:MHD-relaxed-approx} has a unique local classical solution $\boldb_R \in  C^1([0,T_R);V_{R,\sigma})$. Here $0<T_R \le \infty$ denotes the maximal existence time of the local solution $\boldb_R$. Indeed,   $\boldb_R$ exists globally, that is, $T_R =\infty$. To show this, let  $\boldu_R = \mathbf{U}_\alpha * (\boldb_R\otimes \boldb_R)$. Since  $V_{R,\sigma}\hookrightarrow \tSob{s}\hookrightarrow \thSob{s}$ and  $(\thSob{s})^*=\thSob{-s}$ for any $s \ge 0$, it  follows  that
$\boldb_R \otimes \boldb_R \in C([0,T_R);\thSob{1-\alpha} ).$  Hence by Theorem   \ref{theorem:Frational Stokes-Lpdata}, $\boldu_R$ belongs to $C([0,T_R);\thSob{\alpha} )$ and satisfies the energy identity
\[ 
   \int_{\mathbb{R}^d} |\Lambda^{\alpha} \boldu_R (t)|^2 \myd{x}= - \int_{\mathbb{R}^d} \boldb_R (t)\otimes \boldb_R (t) : \nabla \boldu_R (t)  \, dx
\] 
for  $0< t<  T_R$. Multiply the $\boldb_R$-equation in \eqref{eq:MHD-relaxed-approx} by $\boldb_R$ and using the divergence-free condition, we get
\[
\frac{d}{dt}\left(\frac{1}{2}\norm{\boldb_R (t)}{\NLeb{2}}^2 \right)+ \norm{\Lambda^\beta \boldb_R (t)}{\NLeb{2}}^2 =-\int_{\mathbb{R}^d}  \boldb_R (t)\otimes  \boldu_R (t) : \nabla \boldb_R (t) \, dx
\]
for  $0< t<  T_R$. Adding these two identities and using the divergence-free condition, we have
\[   \frac{d}{dt}\left(\frac{1}{2}\norm{\boldb_R (t) }{\NLeb{2}}^2 \right)+ \norm{\Lambda^\alpha \boldu_R (t)}{\NLeb{2}}^2+ \norm{\Lambda^\beta \boldb_R (t) }{\NLeb{2}}^2=0 ,\]
from which  we derive the global  energy estimate
\begin{equation}\label{eq:energy-estimate}
  \norm{\boldb_R(t)}{\NLeb{2}}^2+2 \int_0^t \norm{\Lambda^\alpha \boldu_R(\tau)}{\NLeb{2}}^2  \, d\tau +2  \int_0^t \norm{\Lambda^\beta \boldb_R(\tau)}{\NLeb{2}}^2 \, d\tau \leq \norm{\boldb_0}{\NLeb{2}}^2
\end{equation}
for  $0\leq t<T_R$.
Therefore,  by a standard ODE theory, we conclude that   $T_R= \infty$.

\medskip 
The following lemma will be used to  construct weak solutions of \eqref{eq:MHD-relaxed} by applying the Aubin-Lions compactness lemma.
\begin{lem}\label{lem:Aubin}
There exist $r>1$ and $s\geq 1$ such that  $\{\partial_t \boldb_R \}$ is bounded in $L_r(0,T;H^{-s})$ for any $0<T<\infty$.
\end{lem}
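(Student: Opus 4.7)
The plan is to estimate each term on the right-hand side of
\[
  \partial_t \boldb_R = -\Lambda^{2\beta}\boldb_R + \Proj \mathcal{S}_R \Div\dsp(\boldb_R \otimes \boldu_R - \boldu_R \otimes \boldb_R)
\]
separately and uniformly in $R$. For the dissipative term, the energy estimate \eqref{eq:energy-estimate} gives $\boldb_R$ uniformly bounded in $\Leb{2}(0,T;\thSob{\beta})$, and the pointwise Plancherel-level inequality $(2\pi|\xi|)^{2\beta}\le(1+4\pi^2|\xi|^2)^\beta$ yields $\norm{\Lambda^{2\beta}f}{\tSob{-\beta}} \le \norm{\Lambda^\beta f}{\NLeb{2}}$, so $\Lambda^{2\beta}\boldb_R$ is uniformly bounded in $\Leb{2}(0,T;\tSob{-\beta})$.

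For the nonlinear term, I would invoke Lemma \ref{lem:product-estimate}, which applies since $\alpha+\beta>d/2$, to obtain $\theta_1,\theta_2\in(0,1)$ such that
\[
  \norm{\boldu_R(t)\otimes\boldb_R(t)}{\NLeb{2}} \apprle \norm{\boldu_R(t)}{\NLor{d/(d+1-2\alpha)}{\infty}}^{1-\theta_1}\norm{\boldu_R(t)}{\thSob{\alpha}}^{\theta_1}\norm{\boldb_R(t)}{\NLeb{2}}^{1-\theta_2}\norm{\boldb_R(t)}{\thSob{\beta}}^{\theta_2}.
\]
The uniform $\Leb{\infty}$-in-time bounds on $\norm{\boldu_R}{\NLor{d/(d+1-2\alpha)}{\infty}}$ from \eqref{eq:sol-estimate-R} and on $\norm{\boldb_R}{\NLeb{2}}$ from \eqref{eq:energy-estimate} absorb the non-time-integrable factors. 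Raising to the power $r$ in time and applying H\"older's inequality with exponents $2/(r\theta_1)$ and $2/(r\theta_2)$ produces a uniform bound whenever $r(\theta_1+\theta_2)\le 2$; since $\theta_1+\theta_2<2$, any $r\in(1,2/(\theta_1+\theta_2)]$ is admissible. Hence $\boldu_R\otimes\boldb_R$, and symmetrically $\boldb_R\otimes\boldu_R$, is uniformly bounded in $\Leb{r}(0,T;\Leb{2})$.

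Applying $\Div$ then yields a uniform bound in $\Leb{r}(0,T;\tSob{-1})$; the operator $\Proj\mathcal{S}_R$ is a Fourier multiplier with symbol of modulus at most one, so it acts as a contraction on every $\tSob{s}$ uniformly in $R$, and the nonlinear contribution is uniformly bounded in $\Leb{r}(0,T;\tSob{-1})$. Setting $s=\max\{1,\beta\}\ge 1$ and further restricting $r$ to $(1,\min\{2,2/(\theta_1+\theta_2)\}]$ so that $\Leb{2}\hookrightarrow\Leb{r}$ on $(0,T)$ absorbs the dissipative term (via the continuous inclusion $\tSob{-\beta}\hookrightarrow\tSob{-s}$), we obtain the claimed uniform bound on $\partial_t\boldb_R$ in $\Leb{r}(0,T;\tSob{-s})$. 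The principal obstacle is isolating an $r>1$ in the H\"older step, which relies precisely on the strict inequalities $\theta_1,\theta_2<1$ provided by Lemma \ref{lem:product-estimate}; once those are in hand, the $R$-uniformity of $\Proj\mathcal{S}_R$ on every $\tSob{s}$ is automatic from its Fourier-multiplier description.
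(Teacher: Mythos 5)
Your proposal is correct and follows essentially the same route as the paper's proof: the same choice $s=\max\{1,\beta\}$, the same reduction of the nonlinear term to $\norm{\boldu_R\otimes\boldb_R}{\NLeb{2}}$ via the uniform boundedness of $\Proj\mathcal{S}_R$, the same use of Lemma \ref{lem:product-estimate} together with \eqref{eq:sol-estimate-R} and \eqref{eq:energy-estimate}, and the same admissible exponent $r\le\min\{2,\,2/(\theta_1+\theta_2)\}$. The only cosmetic difference is that the paper obtains the time integrability $r_0=2/(\theta_1+\theta_2)$ by Young's inequality pointwise in time rather than your H\"older-in-time step, which is an equivalent computation.
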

\begin{proof} To show this, let $s =\max \{\beta,1\}$.  Then since $-s+1 \le 0$ and $2\beta-s \le \beta$, it follows that
\begin{align*}
\action{\partial_t \boldb_R,\Phi}&=\action{-\Lambda^{2\beta}\boldb_R,\Phi} - \int_{\mathbb{R}^d} (\boldb_R\otimes \boldu_R - \boldu_R\otimes \boldb_R ):\nabla ( \Proj\mathcal{S}_R\Phi )\myd{x}\\
&\apprle \norm{\Lambda^{2\beta} \boldb_R}{\tSob{-s}}\norm{\Phi}{\tSob{s}}+\norm{|\boldu_R| | \boldb_R| }{\tSob{-s+1}}\norm{\nabla ( \Proj\mathcal{S}_R\Phi )}{\tSob{s-1}}\\
&\apprle \left( \norm{\boldb_R}{\tSob{2\beta-s}}+\norm{ |\boldu_R| | \boldb_R| }{\tSob{-s+1}} \right)\norm{\Phi}{\tSob{s}}\\
&\apprle \left( \norm{\boldb_R}{\tSob{\beta}}+\norm{|\boldu_R| |  \boldb_R| }{\NLeb{2}} \right)\norm{\Phi}{\tSob{s}}
\end{align*}
for all $\Phi \in \mathscr{S}$. Therefore,
\begin{equation}\label{eq:time-estimate}
    \norm{\partial_t \boldb_R(t)}{\tSob{-s}}\apprle   \norm{\boldb_R}{\tSob{\beta}}+\norm{|\boldu_R | | \boldb_R |}{\NLeb{2}}.
\end{equation}
By Lemma \ref{lem:product-estimate}, there exist  $0< \theta_1 , \theta_2 < 1$ such that
\[
\norm{|\boldu_R | | \boldb_R |  }{2}\apprle \norm{\boldu_R }{\NLor{d/(d+1-2\alpha)}{\infty}}^{1-\theta_1} \norm{\boldu_R }{\thSob{\alpha}}^{\theta_1}\norm{\boldb_R  }{\NLeb{2}}^{1-\theta_2}
\norm{\boldb_R  }{\thSob{\beta}}^{\theta_2} .
\]
 Setting   $r_0 = 2/(\theta_ 1 +\theta_2 ) >1$ and using Young's inequality, we have
\[
\norm{|\boldu_R | | \boldb_R |  }{2}  \apprle \left(\norm{\boldu_R }{\NLor{d/(d+1-2\alpha)}{\infty}}^{1-\theta_1} \norm{\boldu_R }{\thSob{\alpha}}^{\theta_1} \right)^{2/r_0 \theta_1}+ \left(\norm{\boldb_R  }{\NLeb{2}}^{1-\theta_2}
\norm{\boldb_R  }{\thSob{\beta}}^{\theta_2} \right)^{2/r_0 \theta_2}
\]
and so
\[
\norm{|\boldu_R | | \boldb_R | }{2}^{r_0}  \apprle  \norm{\boldu_R }{\NLor{d/(d+1-2\alpha)}{\infty}}^{2(1-\theta_1)/ \theta_1}\norm{\Lambda^\alpha \boldu_R}{\NLeb{2}}^{2} +  \norm{\boldb_R  }{\NLeb{2}}^{2(1-\theta_2)/\theta_2}
\norm{\Lambda^\beta\boldb_R}{\NLeb{2}}^{2}.
\]
By   \eqref{eq:sol-estimate-R} and   (\ref{eq:energy-estimate}),    we thus obtain
\[
\int_0^\infty \norm{|\boldu_R | | \boldb_R |  }{2}^{r_0} \, dt   \apprle  \norm{\boldb_0}{\NLeb{2}}^{4/\theta_1  -2}  +  \norm{\boldb_0}{\NLeb{2}}^{2/\theta_2 }  .
\]
Now, let $r= \min \{2,r_0 \}$. Then by     \eqref{eq:energy-estimate} and \eqref{eq:time-estimate}, we have
\begin{align*}
   \norm{\partial_t \boldb_R}{\Leb{r}(0,T;\tSob{-s})}&\apprle T^{1/r-1/2}\norm{\boldb_R}{\Leb{2}(0,T;\tSob{\beta})}
    +T^{1/r-1/r_0}\norm{\boldu_R   \otimes \boldb_R}{\Leb{r_0}(0,T;\Leb{2})} \\
   &\leq C(d,\alpha,\beta,T, \norm{\boldb_0}{\NLeb{2}}) ,
\end{align*}
which  proves that $\{\partial_t \boldb_R\}$ is   bounded in $\Leb{r}(0,T;\tSob{-s})$ for any $0<T<\infty$.
\end{proof}

\subsection{Completion of the proof of Theorem \ref{thm:GWP}}

We have shown that for each $0<R<\infty$, the $R$-truncated problem \eqref{eq:MHD-relaxed-approx} has a unique global    solution $\boldb_R \in  C^1([0, \infty );V_{R,\sigma})$. By  (\ref{eq:sol-estimate-R}) and (\ref{eq:energy-estimate}),  the families   $\{\boldb_R\}_{R>0}$ and  $\{\boldu_R\}_{R>0}$  are bounded in $\Leb{\infty}(0,\infty;\Leb{2})\cap \Leb{2}(0,\infty; \dot{H}^\beta)$ and $\Leb{\infty}(0,\infty;\Lor{d/(d+1-2\alpha)}{\infty})\cap \Leb{2}(0,\infty;\dot{H}^\alpha)$, respectively.

By the weak-* compactness result and (\ref{eq:energy-estimate}), there exists a sequence $\{R_k\}$ with $R_k\rightarrow \infty$ such that
\[ 
\boldu_{R_k}\rightarrow \boldu \quad\mbox{weakly-* in}\,\, \Leb{\infty}(0,\infty;\Lor{d/(d+1-2\alpha)}{\infty})\cap \Leb{2}(0,\infty;\thSob{\alpha})
\] 
and
\[ 
\boldb_{R_k}\rightarrow \boldb \quad\mbox{weakly-* in} \,\, \Leb{\infty}(0,\infty;\Leb{2})\cap \Leb{2}(0,\infty;\thSob{\beta})
\]
for some  $(\boldu , \boldb )$ satisfying
\begin{equation}\label{eq:energy-estimate-limit}
 \| \boldb \|_{\Leb{\infty}(0,\infty;\Leb{2})}^2+     \| \boldb\|_{\Leb{2}(0,\infty;\thSob{\beta})}^2+    \| \boldu\|_{\Leb{2}(0,\infty;\thSob{\alpha})}^2 \le 2 \norm{\boldb_0}{\NLeb{2}}^2 .
\end{equation}
By Lemma \ref{lem:Aubin} and the Aubin-Lions compactness lemma,   there exists a subsequence of $\{\boldb_{R_k}\}$,  denoted still  by $\{\boldb_{R_k}\}$, such that
\begin{equation}\label{strong conv-b}
\boldb_{R_k}\rightarrow \boldb \quad\mbox{in}\,\, \Leb{2}(K \times (0,T) )
\end{equation}
 for any compact $K \subset \mathbb{R}^d$ and finite  $T>0$.
 Moreover, by   \eqref{eq:energy-estimate}, we have
\begin{align*}
\norm{ \boldb_{R_k}\otimes \boldb_{R_k}-\boldb\otimes \boldb }{\NLeb{1};K}&\leq \norm{ \boldb_{R_k}\otimes (\boldb_{R_k}-\boldb) }{\NLeb{1};K}  +\norm{ (\boldb_{R_k} -\boldb )\otimes \boldb }{\NLeb{1};K}\\
&\leq 2 \norm{\boldb_0}{\NLeb{2}} \norm{\boldb_{R_k}  -\boldb }{\NLeb{2};K}
\end{align*}
and so
\begin{equation}\label{strong conv-bob}
\boldb_{R_k}\otimes \boldb_{R_k}\rightarrow \boldb\otimes \boldb \quad\mbox{in}\,\,  \Leb{2}(0,T;\Leb{1}(K))
\end{equation}
for  any compact $K \subset \mathbb{R}^d$ and finite  $T>0$.

Next, we show that
\begin{equation}\label{strong conv-u}
\boldu_{R_k}\rightarrow   \mathbf{U}_\alpha * (\boldb \otimes \boldb )
\quad\mbox{in}\,\, \Leb{2}(0,T;\Lor{d/(d+1-2\alpha)}{\infty}(K))
\end{equation}
 for  any finite  $T$ and  compact $K \subset \mathbb{R}^d$, which  automatically implies   that
\begin{equation}\label{u=Ubb}
 \boldu  = \mathbf{U}_\alpha * (\boldb \otimes \boldb );
\end{equation}
hence $\boldu$ satisfies (\ref{eq:velocity}).

Let $\boldv  = \mathbf{U}_\alpha * (\boldb \otimes \boldb )$. Fixing
 $0<r, T< \infty$, we write
\begin{align*}
  \boldu_{R_k} (x,t)- \boldv (x,t)  &=\int_{B_{M+r}} \mathbf{U}_\alpha (x-y): (\boldb_{R_k}\otimes \boldb_{R_k} -\boldb \otimes \boldb ) (y,t) \myd{y}\\
  &\relphantom{=}+\int_{\mathbb{R}^d \setminus B_{M+r}} \mathbf{U}_\alpha (x-y): (\boldb_{R_k}\otimes \boldb_{R_k} -\boldb \otimes \boldb ) (y,t)\myd{y}\\
  &=\bm{I}_1^k ( x,t)+\bm{I}_2^k (x,t )
\end{align*}
for $(x,t) \in B_r \times (0,T)$, where  $M>1$ is a large number to be determined later. Recall from \eqref{eq:Stokes-Green} that
$$
|\mathbf{U}_\alpha(x)|\apprle \frac{1}{|x|^{d+1-2\alpha}}.
$$
Hence by Lemma \ref{lem:Young} and (\ref{strong conv-bob}), we have
\begin{align*}
 \norm{\bm{I}_1^k}{L_2 (0,T;\Lor{d/(d+1-2\alpha)}{\infty})} &\apprle \norm{\mathbf{U}_\alpha}{\NLor{d/(d+1-2\alpha)}{\infty}}\norm{\boldb_{R_k}\otimes \boldb_{R_k}-\boldb \otimes \boldb }{L_2(0,T; \Leb{1}(B_{M+r}) )}\\
  & \rightarrow 0 \quad\mbox{as}\,\, k \rightarrow \infty .
\end{align*}
To estimate $|\bm{I}_2^k |$, we choose $0<\theta  \le  1/2$ and $2<p<{2d}/{(2\alpha-1)}$
such that
\[
\frac{1}{p}= \frac{1}{2}-\frac{\theta \beta}{d}.
\]
Then by  Proposition \ref{prop:Gagliardo},
\[
\|\boldb_{R_k}\|_{p} \apprle \|\boldb_{R_k}\|_{2}^{1-\theta}\norm{\Lambda^\beta\boldb_{R_k}}{\NLeb{2}}^\theta .
\]
Hence setting $q= 2/\theta $, we have
\[
 \norm{\boldb_{R_k}}{\Leb{q}(0,\infty;\Leb{p})} \apprle\norm{\boldb_{R_k}}{\Leb{\infty}(0,\infty;\Leb{2})}^{1-\theta}
 \norm{\boldb_{R_k}}{\Leb{2}(0,\infty;\thSob{\beta})}^{\theta} \apprle \|\boldb_{0}\|_{2}
\]
and similarly
\[
 \norm{\boldb}{\Leb{q}(0,\infty;\Leb{p})}   \apprle \|\boldb_{0}\|_{2}.
\]
On the other hand, since $d+1-2\alpha-{d(p-2)}/{p}>0$, it follows    that  if $|x|\leq r$, then
  \begin{align*}
  |\bm{I}_2^k (x,t)| &\leq \left(\int_{\mathbb{R}^d \setminus B_M} |\mathbf{U}_\alpha(z)|^{(p/2)'}dz \right)^{1/(p/2)'} \norm{ [\boldb_{R_k}\otimes \boldb_{R_k}-\boldb \otimes \boldb](t) }{\NLeb{p/2}}\\
  &\apprle M^{-[d+1-2\alpha-d(p-2)/p] } \left(\norm{\boldb_{R_k}(t)}{\NLeb{p}}^2 + \norm{\boldb (t)}{\NLeb{p}}^2 \right).
  \end{align*}
Therefore,  noting that $q \ge 4$, we deduce that
  \begin{align*}
      \norm{\boldu_{R_k}-\boldv}{\Leb{2}(0,T;\Lor{d/(d+1-2\alpha)}{\infty}(B_r))}
     &\le   \norm{\bm{I}_1^k}{\Leb{2}(0,T;\Lor{d/(d+1-2\alpha)}{\infty}(B_r))}\\
     & \quad + C(r,T)  M^{-[d+1-2\alpha-d(p-2)/ p]} \|\boldb_{0}\|_{2}^2
 \end{align*}
for some constant    $C(r,T)$ independent of $k$ and $M$.

Now, let  $\delta>0$ be given. Then  choosing  $M>1$ so large that
$$
C(r,T)  M^{-[d+1-2\alpha-d(p-2)/ p]} \|\boldb_{0}\|_{2}^2 <\delta ,
$$
and then letting  $k \to \infty$, we have
 \[  \limsup_{k\rightarrow \infty} \norm{\boldu_{R_k}-\boldv}{\Leb{2}(0,T;\Lor{d/(d+1-2\alpha)}{\infty}(B_r))}\leq \delta,
  \]
which  proves (\ref{strong conv-u}).

Finally, we  show that
\begin{equation}\label{strong cov-bu}
\begin{aligned}
&\boldu_{R_k}\otimes \boldb_{R_k} \rightarrow    \boldu\otimes \boldb
\quad\mbox{in}\,\, \Leb{1}(K \times (0,T)), \\
&\boldb_{R_k}\otimes \boldu_{R_k} \rightarrow    \boldb\otimes \boldu
\quad\mbox{in}\,\, \Leb{1}(K \times (0,T))
\end{aligned}
\end{equation}
 for  any compact $K \subset \mathbb{R}^d$ and finite  $T>0$. By symmetry, it suffices to show the first assertion. We write
\[
\boldu_{R_k}\otimes \boldb_{R_k}  -     \boldu  \otimes \boldb=  ( \boldu_{R_k}  -  \boldu)\otimes \boldb_{R_k} + \boldu \otimes( \boldb_{R_k}  -  \boldb).
\]
First, since $1/2<\alpha<(d+1)/2$, $\beta>0$,  and $2\alpha+\beta>d/2+1$, there exists $2<p_1 <\infty$ such that
\[
\max\left\{ \frac{1}{2}-\frac{\beta}{d}, 0 \right\} < \frac{1}{p_1}< \min \left\{ \frac{1}{2}, \frac{2\alpha-1}{d} \right\}.
\]
Then by  Proposition \ref{prop:upper-critical},
\[
\|\boldb_{R_k}\|_{\NLor{p_1}{1}} \apprle \|\boldb_{R_k}\|_{2}^{1-\theta_1}\norm{\Lambda^\beta \boldb_{R_k} }{\NLeb{2}}^{\theta_1} ,
\]
where $0<\theta_1 <1$ is defined by $1/p_1 = 1/2 -(\theta_1 \beta )/d$. Let   $K \subset \mathbb{R}^d$ be compact. Then since $1/p_1 > 1/2 -\beta/d$ and  $d/(2\alpha -1) < p_1$, it follows from   Lemma \ref{lem:Holder} and the Sobolev embedding theorem that
\begin{align*}
\|( \boldu_{R_k}  -  \boldu )\otimes \boldb_{R_k}\|_{1;K} & \apprle \| \boldu_{R_k}  -  \boldu   \|_{d/(d+1-2\alpha),\infty ;K}\|\boldb_{R_k}\|_{d/(2\alpha-1),1; K } \\
& \apprle_K \| \boldu_{R_k}  -  \boldu  \|_{d/(d+1-2\alpha),\infty ;K} \|\boldb_{R_k}\|_{H^{\beta}}.
\end{align*}
Hence  by  \eqref{eq:energy-estimate}, (\ref{strong conv-u}) and (\ref{u=Ubb}), we have
\begin{align*}
\|( \boldu_{R_k}  -  \boldu  )\otimes \boldb_{R_k}\|_{L_1 (0,T; L_1 (K)) } & \apprle_{K,T}  \norm{\boldu_{R_k}-\boldu}
{\Leb{2}(0,T;\Lor{d/(d+1-2\alpha)}{\infty}(K))}\|\boldb_{0}\|_{2} \\
& \rightarrow 0 \quad\mbox{as}\,\, k \to \infty
\end{align*}
for any finite $T$. To estimate $\boldu\otimes ( \boldb_{R_k}  - \boldb)$, we choose $2<p_2 < \infty$ such that
\[
\max\left\{ \frac{1}{2}-\frac{\alpha}{d}, 0 \right\} < \frac{1}{p_2}< \min \left\{ \frac{1}{2}, \frac{d+1-2\alpha}{d} \right\}.
\]
By  Proposition \ref{prop:upper-critical} again,
\[
\|\boldu \|_{2;K} \apprle_K  \|\boldu \|_{p_2} \apprle  \|      \boldu   \|_{d/(d+1-2\alpha),\infty}^{1-\theta_2} \|\Lambda^{\alpha} \boldu \|_{2}^{\theta_2}
\]
for some $0< \theta_2 < 1$. Hence by  (\ref{eq:energy-estimate-limit}), (\ref{strong conv-b}), and (\ref{u=Ubb}), we have
\begin{align*}
\|\boldu \otimes ( \boldb_{R_k} -     \boldb  )\|_{L_1 (0,T; L_1 (K)) } & \le  \|\boldu \|_{L_2 (0,T; L_2 (K)) }\|\boldb_{R_k}  -  \boldb  \|_{L_2 (0,T; L_2 (K)) }\\
& \apprle_{K,T} \left( \|\boldb_{0}\|_{2}  + \|\boldb_{0}\|_{2}^2 \right)  \norm{\boldb_{R_k}-\boldb}
{\Leb{2}(0,T;L_2 (K))} \\
& \rightarrow 0 \quad\mbox{as}\,\, k \to \infty ,
\end{align*}
which proves  (\ref{strong cov-bu}).

We have shown    strong convergence for  the nonlinear terms $\boldb_{R_k}\otimes \boldb_{R_k} $, $\boldu_{R_k}\otimes \boldb_{R_k} $, and $\boldb_{R_k}\otimes \boldu_{R_k}$.  Hence by a standard argument, we can show that $(\boldu,\boldb)$ satisfies \eqref{eq:magnetic}. It is obvious that both $\boldu$ and $\boldb$ are divergence-free.
This completes the proof of Theorem \ref{thm:GWP}. \hfill $\square$

\section{Proof of Theorem \ref{thm:uniqueness}}\label{sec:uniqueness}

This section is devoted to proving Theorem \ref{thm:uniqueness}.
By Theorem \ref{thm:GWP} and its proof, there exists a global weak solution $(\boldu,\boldb )$ of  \eqref{eq:MHD-relaxed}, which also satisfies  $\boldu  = \mathbf{U}_\alpha * (\boldb \otimes \boldb )$ in $\mathbb{R}^d\times [0, \infty )$.
Since $\boldb$ is divergence-free, $\boldb$ is a weak solution of \eqref{eq:PFHE} with $\boldF = \boldb \otimes \boldu$. It follows from Lemma \ref{lem:div-free} that $\boldb \otimes \boldu \in \Leb{2\beta /(2\beta-1)}(0,T;\Leb{2})$ for any $T<\infty$.
Hence by Theorem \ref{thm:energy-equality}, $\boldb$    belongs to $C([0, \infty );\Leb{2})$ and satisfies
\begin{equation}\label{energy identity-b-F2}
\frac{1}{2} \frac{d}{dt} \norm{\boldb (t)}{\NLeb{2}}^2+  \norm{\Lambda^\beta \boldb(t)}{\NLeb{2}}^2   =-   \int_{\mathbb{R}^d}  \boldb (t)  \otimes \boldu (t) : \nabla \boldb (t) \myd{x}
\end{equation}
for almost all $t  \in (0, \infty )$. Using Young's inequality (Lemma \ref{lem:Young}), we easily  deduce that $\boldu  \in C( [ 0,\infty );\Lor{{d}/{(d+1-2\alpha)}}{\infty})$.  Let  $1< p< d/(2\alpha -1)$ be chosen  so that $H^\beta \hookrightarrow L_{2p}$. Then it follows from H\"{o}lder's inequality and Lemma \ref{lem:product2} that
$$
\boldb (t) \otimes \boldb (t)  \in L_p \cap \dot{H}^{1-\alpha} \quad\mbox{and}\quad
\Div\dsp(\boldb (t) \otimes \boldb (t) ) = ( \boldb (t) \cdot \nabla ) \boldb (t) \in L_{2p/(p+1)}
$$
for almost all $t \in (0,\infty)$. Hence by Theorem \ref{theorem:Frational Stokes-Lpdata} and Lemma \ref{lem:div-free}, $\boldu$ satisfies
\begin{equation}\label{eq:energy-identity-u2}
   \int_{\mathbb{R}^d} |\Lambda^{\alpha} \boldu (t) |^2 \myd{x}=  \int_{\mathbb{R}^d} ( \boldb (t) \cdot \nabla ) \boldb (t) \cdot  \boldu (t) \, dx
\end{equation}
for almost all  $t  \in (0,\infty ) $. Adding (\ref{energy identity-b-F2}) and (\ref{eq:energy-identity-u2}) and then integraing over $(0,t)$, we get the energy identity (\ref{eq:energy-equality}) for any $t \ge 0$.

To complete the proof of Theorem \ref{thm:uniqueness} it remains to prove the uniqueness assertion.
Let $(\boldu^1,\boldb^1)$ and $(\boldu^2,\boldb^2)$ be two weak solutions of \eqref{eq:MHD-relaxed} for the same initial data $\boldb_0 \in \Leb{2}$ with ${\rm div} \, \boldb_0 =0$.
Write $\tilde{\boldu}=\boldu^1-\boldu^2$ and $\tilde{\boldb}=\boldb^1-\boldb^2$. Then  $(\tilde{\boldu},\tilde{\boldb})$ satisfies
\[ \partial_t \tilde{\boldb}+\Lambda^{2\beta}\tilde{\boldb}+(\boldu^1\cdot\nabla)\tilde{\boldb}=
\Div\dsp (\tilde{\boldb}\otimes \boldu^1+\boldb^2 \otimes \tilde{\boldu}-\tilde{\boldu}\otimes\boldb^2) \]
and
\[	\tilde{\boldu} = \mathbf{U}_\alpha * (\tilde{\boldb}\otimes \boldb^1+\boldb^2 \otimes \tilde{\boldb}).\]
Moreover, by Lemma \ref{lem:div-free},
\[
\boldu^i \otimes \boldb^j \in L_{2\beta /(2\beta -1)}(0,T; L_2 ) \quad\mbox{and}\quad  \boldb^i \in L_{2\beta}(0,T; \dot{H}^1 )
\]
for any $T< \infty$, where $i,j=1,2$.
Hence by   Theorem \ref{thm:energy-equality}, we have
 \begin{align*}
	\frac{1}{2}\frac{d}{dt} \norm{\tilde{\boldb}}{\NLeb{2}}^2+\norm{\Lambda^\beta \tilde{\boldb}}{\NLeb{2}}^2&=- \int_{\mathbb{R}^d} \tilde{\boldb}\otimes \boldu^1:\nabla \tilde{\boldb}\myd{x}+\int_{\mathbb{R}^d} \left(-\boldb^2 \otimes \tilde{\boldu} +\tilde{\boldu}\otimes \boldb^2 \right): \nabla \tilde{\boldb}\myd{x}\\
	&=I+II.
\end{align*}
Hence by Gronwall's inequality, it suffices to show that
\begin{equation}\label{eq:Gronwall-estimate}
	I+II\leq C\phi(t)\norm{\tilde{\boldb}}{\NLeb{2}}^2+\frac{1}{2}\norm{\Lambda^\beta \tilde{\boldb}}{\NLeb{2}}^2
\end{equation}
for some constant $C$, where $\phi$ is a locally integrable function defined by
\[	\phi(t)=1+\norm{\Lambda^\alpha \boldu^1(t)}{\NLeb{2}}^2 + \norm{\Lambda^\alpha \boldu^2(t)}{\NLeb{2}}^2+\norm{\Lambda^\beta \boldb^1(t)}{\NLeb{2}}^2 + \norm{\Lambda^\beta \boldb^2(t)}{\NLeb{2}}^2. \]
To do this, we make crucial use of Lemma \ref{lem:product-estimate}; that is, we  choose  $0<\theta_1,\theta_2<1$ with $\theta_1+\theta_2+1/\beta \leq 2$ such that
\begin{equation}\label{est-ui and bj}
\norm{\boldu^i \otimes \boldb^j}{\NLeb{2}}
 \apprle  \norm{\boldu^i}{\NLor{d/(d+1-2\alpha)}{\infty}}^{1-\theta_1}
 \norm{\Lambda^\alpha\boldu^i}{\NLeb{2}}^{\theta_1}
 \norm{\boldb^j}{\NLeb{2}}^{1-\theta_2}\norm{\Lambda^\beta \boldb^j}{\NLeb{2}}^{\theta_2}
\end{equation}
for $i,j=1,2$. Note also  that if $\alpha<1$ and $\mu={(1-\alpha)}/{\beta}$, then since $\alpha+(1-\mu)\beta \geq d/2+1$, the numbers $\theta_1$ and $\theta_2$ can be chosen  so that $(1+\mu)\theta_1+\theta_2+1/\beta\leq 2$.

Now,  by (\ref{est-ui and bj}) and Proposition \ref{prop:Gagliardo}, we  obtain
\begin{align*}
I&\leq \norm{\tilde{\boldb}\otimes \boldu^1}{\NLeb{2}}\norm{\nabla \tilde{\boldb}}{\NLeb{2}}\\
&\apprle  \norm{\boldu^1}{\NLor{d/(d+1-2\alpha)}{\infty}}^{1-\theta_1}\norm{\Lambda^\alpha\boldu^1}{\NLeb{2}}^{\theta_1}\norm{\tilde{\boldb}}{\NLeb{2}}^{1-\theta_2}\norm{\Lambda^\beta \tilde{\boldb}}{\NLeb{2}}^{\theta_2} \norm{\tilde{\boldb}}{\NLeb{2}}^{1-1/\beta}\norm{\Lambda^\beta\tilde{\boldb}}{\NLeb{2}}^{1/\beta}.
\end{align*}
and
\begin{equation}\label{eq:estimate-II}
II\apprle \norm{\tilde{\boldu}}{\NLor{d/(d+1-2\alpha)}{\infty}}^{1-\theta_1}\norm{\Lambda^\alpha\tilde{\boldu}}{\NLeb{2}}^{\theta_1}\norm{\boldb^2}{\NLeb{2}}^{1-\theta_2}\norm{\Lambda^\beta \boldb^2}{\NLeb{2}}^{\theta_2}\norm{\tilde{\boldb}}{\NLeb{2}}^{1-1/\beta}\norm{\Lambda^\beta \tilde{\boldb}}{\NLeb{2}}^{1/\beta}.
\end{equation}

It is easy to estimate $I$. Indeed, by Lemma \ref{lem:b_R and u_R},  the energy identity \eqref{eq:energy-equality}, and Young's inequality, we have
\begin{align*}
I&\apprle \norm{\boldb_0}{\NLeb{2}}^{2(1-\theta_1)} \left(\norm{\Lambda^\alpha {\boldu}^1}{\NLeb{2}}^{\theta_1}\norm{\tilde{\boldb}}{\NLeb{2}}^{2-\theta_2-1/\beta} \right)\norm{\Lambda^\beta \tilde{\boldb}}{\NLeb{2}}^{\theta_2 +1/\beta},\\
&\leq C \norm{\Lambda^{\alpha}\boldu^1}{\NLeb{2}}^{\frac{2\theta_1}{2-\theta_2-1/\beta}}\norm{\tilde{\boldb}}{\NLeb{2}}^2+\frac{1}{4}\norm{\Lambda^\beta \tilde{\boldb}}{\NLeb{2}}^2\\
&\leq C \phi(t)^{\frac{\theta_1}{2-\theta_2-1/\beta}}\norm{\tilde{\boldb}}{\NLeb{2}}^2+\frac{1}{4}\norm{\Lambda^\beta \tilde{\boldb}}{\NLeb{2}}^2
\end{align*}
for some $C=C(\alpha,\beta,d,\norm{\boldb_0}{\NLeb{2}})$. Since  $\theta_1 / (2-\theta_2-1/\beta)\leq 1$ and $\phi(t)\geq 1$, we get
\[	I\leq C \phi(t)\norm{\tilde{\boldb}}{\NLeb{2}}^2+\frac{1}{4}\norm{\Lambda^\beta \tilde{\boldb}}{\NLeb{2}}^2.\]

To estimate  $II$, we recall  that
\[	\tilde{\boldu} = \mathbf{U}_\alpha * (\tilde{\boldb}\otimes \boldb^1+\boldb^2 \otimes \tilde{\boldb}).\]
By Theorems \ref{theorem:Frational Stokes-Lpdata} and \ref{theorem:Frational Stokes-Lpdata-intersection}, we have
\begin{equation}\label{eq:weak-Lp-estimates}
\norm{\tilde{\boldu}}{\NLor{d/(d+1-2\alpha)}{\infty}}\apprle \left(\norm{\boldb^1}{\NLeb{2}}+\norm{\boldb^2}{\NLeb{2}} \right)\norm{\tilde{\boldb}}{\NLeb{2}} \apprle \norm{\tilde{\boldb}}{\NLeb{2}}
\end{equation}
and
\begin{equation}\label{eq:difference-u}
\norm{\Lambda^\alpha \tilde{\boldu}}{\NLeb{2}} \apprle \norm{\tilde{\boldb}\otimes \boldb^1+\boldb^2 \otimes \tilde{\boldb}}{\thSob{1-\alpha}}.
\end{equation}

Suppose first that $\alpha \geq 1$. Then
\[	\thSob{1-\alpha}=\left(\thSob{\alpha-1}\right)^*\quad \text{and}\quad \thSob{\alpha-1}\hookrightarrow \Leb{r},\]
where $1<r<\infty$ is defined by ${1}/{r}={1}/{2}-{(\alpha-1)}/{d}$.
Fix any $\Phi \in \thSob{\alpha-1}$. Then by H\"older's inequality and Proposition \ref{prop:Gagliardo}, we have
\begin{align*}
&\int_{\mathbb{R}^d} \left(\tilde{\boldb}\otimes \boldb^1+\boldb^2\otimes \tilde{\boldb} \right):\Phi \myd{x}\\
&\quad \leq \left(\norm{\boldb^1}{\NLeb{2r'}}+\norm{\boldb^2}{\NLeb{2r'}}\right)\norm{\tilde{\boldb}}{\NLeb{2r'}}\norm{\Phi}{\NLeb{r}}\\
&\quad \apprle \left(\norm{\boldb^1}{\NLeb{2r'}}+\norm{\boldb^2}{\NLeb{2r'}}\right)\norm{\tilde{\boldb}}{2r'}\norm{\Phi}{\thSob{\alpha-1}}\\
&\quad \apprle \left(\norm{\boldb^1}{2}^{1-\lambda}\norm{\boldb^1}{\thSob{\beta}}^{\lambda}+\norm{\boldb^2}{2}^{1-\lambda}\norm{\boldb^2}{\thSob{\beta}}^{\lambda} \right)\norm{\tilde{\boldb}}{2}^{1-\lambda}\norm{\tilde{\boldb}}{\thSob{\beta}}^{\lambda} \norm{\Phi}{\thSob{\alpha-1}},
\end{align*}
where
\[	0<\lambda =\frac{d}{2\beta r}= \frac{d+2-2\alpha}{4\beta}\leq \frac{1}{2}.\]
Hence by \eqref{eq:difference-u},
\begin{equation}\label{eq:phi-estimate-u}
\begin{aligned}
\norm{\Lambda^\alpha\tilde{\boldu}}{2}&\apprle\left(\norm{\boldb^1}{2}^{1-\lambda}\norm{\boldb^1}{\thSob{\beta}}^{\lambda}+\norm{\boldb^2}{2}^{1-\lambda}\norm{\boldb^2}{\thSob{\beta}}^{\lambda} \right) \norm{\tilde{\boldb}}{2}^{1-\lambda}\norm{\tilde{\boldb}}{\thSob{\beta}}^{\lambda}\\
&\apprle  \phi(t)^{\lambda/2}\norm{\tilde{\boldb}}{2}^{1-\lambda}\norm{\Lambda^\beta\tilde{\boldb}}{\NLeb{2}}^{\lambda}.
\end{aligned}
\end{equation}
Combining  \eqref{eq:estimate-II}, \eqref{eq:weak-Lp-estimates}, and \eqref{eq:phi-estimate-u}, we have
\begin{align*}
II&\leq C\norm{\tilde{\boldb}}{\NLeb{2}}^{1-\theta_1}\left(\phi(t)^{\lambda/2}\norm{\tilde{\boldb}}{\NLeb{2}}^{1-\lambda}\norm{\Lambda^\beta \tilde{\boldb}}{\NLeb{2}}^{\lambda} \right)^{\theta_1}\phi(t)^{\theta_2/2}\norm{\tilde{\boldb}}{\NLeb{2}}^{1-1/\beta}\norm{\Lambda^\beta \tilde{\boldb}}{\NLeb{2}}^{1/\beta} \\
&=C  \phi(t)^{(\lambda\theta_1 +\theta_2)/2}\norm{\tilde{\boldb}}{2}^{2-\lambda\theta_1-1/\beta}\norm{\Lambda^\beta \tilde{\boldb}}{\NLeb{2}}^{\lambda\theta_1 + 1/\beta}   \\
&\leq  C\phi(t)^{\frac{\lambda\theta_1 +\theta_2}{2-\lambda\theta_1-1/\beta}}\norm{\tilde{\boldb}}{\NLeb{2}}^2+\frac{1}{4}\norm{\Lambda^\beta \tilde{\boldb}}{\NLeb{2}}^2.
\end{align*}
Since  $\theta_1+\theta_2+1/\beta\leq 2$ and  $2\lambda \leq 1$, it follows that
\[	\frac{\lambda\theta_1+\theta_2}{2-\lambda\theta_1 - 1/\beta}\leq 1.\]
Therefore,
\[	II\leq  C\phi(t)\norm{\tilde{\boldb}}{\NLeb{2}}^2+\frac{1}{4}\norm{\Lambda^\beta \tilde{\boldb}}{\NLeb{2}}^2. \]

Suppose next that $\alpha <1$. Then since $\beta \geq \frac{d}{2}+1-\alpha>\frac{d}{2}$, it follows from \eqref{eq:difference-u} 
and Lemma \ref{lem:commutator} that
\begin{align*}
\norm{\Lambda^\alpha \tilde{\boldu}}{\NLeb{2}} \apprle \norm{\tilde{\boldb}\otimes \boldb^1+\boldb^2\otimes \tilde{\boldb}}{\thSob{1-\alpha}}\apprle \left(\norm{\boldb^1}{\tSob{1-\alpha}}+\norm{\boldb^2}{\tSob{1-\alpha}} \right)\norm{\tilde{\boldb}}{\tSob{\beta}}.
\end{align*}
Moreover,    by Proposition \ref{prop:Gagliardo},
\begin{align*}
\norm{\boldb^i}{\tSob{1-\alpha}}&\apprle \norm{\boldb^i}{\NLeb{2}}+\norm{\boldb^i}{\thSob{1-\alpha}}\\
&\apprle \norm{\boldb^i}{\NLeb{2}}+\norm{\boldb^i}{\NLeb{2}}^{1-\mu}\norm{\boldb^i}{\thSob{\beta}}^{\mu}\apprle \phi(t)^{\mu/2}
\end{align*}
for $i=1,2$, where $\mu=(1-\alpha)/\beta$.
Hence
\begin{align*}
II&\leq C\norm{\tilde{\boldb}}{\NLeb{2}}^{1-\theta_1}\left[\phi(t)^{\mu/2}\left(\norm{\tilde{\boldb}}{\NLeb{2}}+\norm{\tilde{\boldb}}{\thSob{\beta}} \right)\right]^{\theta_1}\phi(t)^{\theta_2/2}\norm{\tilde{\boldb}}{\NLeb{2}}^{1-1/\beta}\norm{\Lambda^\beta \tilde{\boldb}}{\NLeb{2}}^{1/\beta} \\
&\leq C \phi(t)^{(\mu\theta_1+\theta_2)/2}\norm{\tilde{\boldb}}{2}^{2-1/\beta}\norm{\Lambda^\beta \tilde{\boldb}}{\NLeb{2}}^{1/\beta} \\
&\relphantom{=}+C  \phi(t)^{(\mu\theta_1 +\theta_2)/2}\norm{\tilde{\boldb}}{2}^{2-\theta_1-1/\beta}\norm{\Lambda^\beta \tilde{\boldb}}{\NLeb{2}}^{\theta_1 + 1/\beta}   \\
&\leq  C\left[\phi(t)^{\frac{\mu\theta_1+\theta_2}{2-1/\beta}}+\phi(t)^{\frac{\mu \theta_1+\theta_2}{2-\theta_1-1/\beta}}\right]\norm{\tilde{\boldb}}{\NLeb{2}}^2+\frac{1}{4}\norm{\Lambda^\beta \tilde{\boldb}}{\NLeb{2}}^2.
\end{align*}
Since $(1+\mu)\theta_1+\theta_2+1/\beta\leq 2$, it follows that
\[	\frac{\mu\theta_1+\theta_2}{2-1/\beta}\leq \frac{\mu \theta_1+\theta_2}{2-\theta_1-1/\beta}\leq 1. \]
Therefore,
\[	II\leq  C\phi(t)\norm{\tilde{\boldb}}{\NLeb{2}}^2+\frac{1}{4}\norm{\Lambda^\beta \tilde{\boldb}}{\NLeb{2}}^2. \]
This completes the proof of \eqref{eq:Gronwall-estimate}. By Gronwall's inequality, we conclude that $\boldb^1=\boldb^2$   on $[0,\infty)$. Moreover, it follows from  \eqref{eq:weak-Lp-estimates} that $\boldu^1=\boldu^2$   on $[0,\infty)$. This completes the proof of Theorem \ref{thm:uniqueness}. \hfill \qed

\appendix

\section{Completion of the proof of Lemma \ref{lem:density-approx-S0}}\label{sec:app-b}

To complete the proof of Lemma \ref{lem:density-approx-S0}, it remains to prove the following result.

\begin{lem}\label{lem:density}  $\mathscr{S}_0$ is dense in $\Leb{p}\cap \thSob{s}$ for  $1<p<\infty$ and $s\in \mathbb{R}$.
\end{lem}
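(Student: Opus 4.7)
The plan is to approximate a given $f \in L^p \cap \dot H^s$ in two stages: first by a compactly supported smooth function, and then reduce further to a function in $\mathscr{S}_0$. The second stage will be essentially the multiplier construction from the proof of Part~(2), adapted to the intersection norm; the first stage is a standard mollification plus spatial truncation, but with the $\dot{H}^s$-convergence of the spatial truncation requiring care.

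Concretely, let $\rho_\epsilon$ denote a standard mollifier and $\chi_R(x) = \chi(x/R)$ a smooth cutoff equal to $1$ on $B_1$ and supported in $B_2$. First I would form $f_\epsilon = f * \rho_\epsilon$; this converges to $f$ in $L^p$ by the standard argument and in $\dot H^s$ because $\Lambda^s(f * \rho_\epsilon) = (\Lambda^s f) * \rho_\epsilon \to \Lambda^s f$ in $L^2$. Then I would spatially truncate to obtain $g_{\epsilon,R} = \chi_R f_\epsilon \in C_c^\infty$; the $L^p$-convergence as $R \to \infty$ is routine dominated convergence.

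For the second stage, given $\phi = g_{\epsilon,R} \in C_c^\infty$, I would set $\widehat{\phi_k}(\xi) = \psi(k\xi)\hat{\phi}(\xi)$, with the same $\psi$ used in the proof of Part~(2), so that $\phi_k \in \mathscr{S}_0$. Writing $\phi - \phi_k = K_k * \phi$ with $K_k(x) = k^{-d}(1-\psi)^{\vee}(x/k)$, Young's inequality gives $\|\phi - \phi_k\|_p \le \|K_k\|_p \|\phi\|_1 = C\, k^{-d(1 - 1/p)} \|\phi\|_1 \to 0$ as $k \to \infty$, using that $p > 1$ and $\phi \in L^1$. The $\dot H^s$-convergence follows by dominated convergence on the Fourier side, noting that the integrand in $\|\phi - \phi_k\|_{\dot{H}^s}^2$ is supported in $\{|\xi| \le 2/k\}$ and dominated by the integrable function $(2\pi|\xi|)^{2s}|\hat{\phi}(\xi)|^2$.

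The hard part, I expect, will be showing that $\|\chi_R f_\epsilon - f_\epsilon\|_{\dot{H}^s} \to 0$ as $R \to \infty$, since multiplication by a spatial cutoff does not interact cleanly with $\Lambda^s$, especially for negative $s$. I would attempt to establish this via a commutator-type estimate $\|[\chi_R,\Lambda^s]f_\epsilon\|_{L^2} \to 0$, exploiting the additional smoothness and decay of $f_\epsilon$ provided by the mollification. If this direct route proves stubborn, a fallback is to first perform a Littlewood–Paley decomposition of $f_\epsilon$ into bandlimited pieces, where on each piece $\dot H^s$ is equivalent to a weighted $L^2$ norm and the spatial truncation is controlled by straightforward $L^2$ arguments.
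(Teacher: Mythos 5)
There is a genuine gap, and it sits exactly where you flagged the ``hard part.'' Your two-stage order --- first cut off in space to get $\phi=\chi_R f_\epsilon\in C_c^\infty$, then repair the low frequencies --- cannot work as stated for all $s$. If $s\le -d/2$ and $\int \chi_R f_\epsilon\,dx\neq 0$ (the generic case), then $\widehat{\chi_R f_\epsilon}(0)\neq 0$ and $\int_{|\xi|<1}|\xi|^{2s}|\widehat{\chi_R f_\epsilon}(\xi)|^2\,d\xi=\infty$, so the intermediate function is not even in $\dot H^s$; hence $\|\chi_R f_\epsilon-f_\epsilon\|_{\dot H^s}\to 0$ is false in general, and your stage-two dominated-convergence argument also collapses, since the dominating function $(2\pi|\xi|)^{2s}|\hat\phi(\xi)|^2$ is integrable only if $\phi\in\dot H^s$. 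Moreover, the proposed commutator route rests on a false premise: mollification gives smoothness of $f_\epsilon$ but no additional decay (and $f_\epsilon$ need not even lie in $L^2$ when $s<0$ or $p>2$), so there is no mechanism forcing $\|[\chi_R,\Lambda^s]f_\epsilon\|_{L^2}\to 0$. Even in the range $-d/2<s<0$ the needed convergence does not follow from $f\in L^p\cap\dot H^s$ alone by the tools you invoke.

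The fix is to reverse the order of the truncations, which is what the paper does: first truncate in frequency by taking the finite Littlewood--Paley sum $u_N=\sum_{|j|\le N}\Delta_j u$, which converges to $u$ in both $L^p$ and $\dot H^s$ and is band-limited to an annulus, hence lies in $L^2\cap L^p$ and in every inhomogeneous space $H^k$ by Bernstein; then multiply by the spatial cutoff $\theta_R$ \emph{and} remove the low frequencies with $(I-S_{-M})$, so the resulting function is in $\mathscr{S}_0$. The $\dot H^s$ error is then estimated block by block: for $j\ge 0$ one trades $2^{js}\le 2^{j(k-1)}$ for $k=\max\{0,\lfloor s\rfloor+2\}$ derivatives via Bernstein and gains a summable factor $2^{-j}$, for the finitely many blocks $-M-1\le j\le -1$ one uses the plain $L^2$ bound, and blocks $j\le -M-2$ vanish; altogether the error is controlled by $\|(\theta_R-1)u_N\|_{H^k}$, which tends to $0$ by dominated convergence because $u_N\in H^k$. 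Your fallback sentence about decomposing into band-limited pieces gestures at this, but as written it neither removes the low frequencies created by the spatial cutoff nor explains how to sum the pieces, so it does not yet close the gap. Your stage-two construction itself (the high-pass filter $\psi(k\cdot)\hat\phi$ and the $L^p$ bound via $\|K_k\|_p\|\phi\|_1$ with $p>1$) is fine and matches the paper's Part (2) idea; the failure is that it is applied to a function that need not belong to $\dot H^s$.
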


Lemma \ref{lem:density} will be proved by using   Littlewood-Paley operators as in the proofs of \cite[Proposition 2.27]{BCD11} and \cite[5.1.5/Theorem]{T83}. Let $\psi$ and $\varphi$ be radial $C^\infty$-functions on $\mathbb{R}^d$ such that
$$
{\rm supp}\, \psi\subset B_{4/3}, \quad {\rm supp}\, \varphi \subset \{ \xi \in \mathbb{R}^d : 3/4 \leq |\xi|\leq 8/3\},
$$
\[	\psi(\xi)+\sum_{j =0}^\infty \varphi(2^{-j}\xi)=1 \quad\mbox{for all}\,\, \xi \in \mathbb{R}^d , \]
and
\[	\sum_{j=-\infty}^\infty \varphi(2^{-j}\xi)=1 \quad\mbox{for all}\,\, \xi \neq 0.\]
For $j\in \mathbb{Z}$, let $\psi_j(\xi)=\psi(2^{-j}\xi)$ and $\varphi_j(\xi)=\varphi(2^{-j}\xi)$. Then the Littlewood-Paley operators $S_j$ and $\Delta_j$ are defined by
\[	S_j u = (\psi_j\hat{u})^{\vee}=\psi_j^{\vee}*u \quad \text{and}\quad \Delta_j u = (\varphi_j\hat{u})^{\vee}=\varphi_j^{\vee} * u	\]
for $u\in \mathscr{S}_0'$. It immediately follows from Young's convolution inequality that
\begin{equation}\label{eq:Lp-boundness-Fourier}
	\norm{S_j u}{\NLeb{p}} + \norm{\Delta_j u}{\NLeb{p}} \leq C\norm{u}{\NLeb{p}} \quad (1\leq p \le \infty)
\end{equation}
for some constant $C$ independent of $j$.

The following inequalities of Bernstein type are also easily proved by using Young's convolution inequality  (see e.g. \cite[Lemma 2.1]{BCD11} or \cite[Lemma 2.4]{WWY21}).
\begin{prop}\label{prop:Bernstein}
Let $1\leq p\leq q\leq \infty$, $k \in \mathbb{N}_0$, and $0<R<\infty$. Then for all $u\in \Leb{p}$ with $\supp \hat{u}\subset B_R$, we have
\[	\norm{D^k u}{\NLeb{q}}:=\max_{|\beta|=k} \norm{D^\beta u}{\NLeb{q}}\apprle R^{k+d(1/p-1/q)} \norm{u}{\NLeb{p}}.\]
Moreover, if $u\in \Leb{p}$ and $\supp \hat{u}\subset B_R\setminus B_{R/2}$, then
\[	\norm{D^k u}{\NLeb{p}}\approx R^k \norm{u}{\NLeb{p}}.\]
In particular, for all $u\in \Leb{p}$, we have
\begin{align}
\norm{\Delta_j u}{\NLeb{q}}&\apprle 2^{jd(1/p-1/q)}\norm{\Delta_j u}{\NLeb{p}}, \\
\norm{S_j u}{\NLeb{q}}&\apprle 2^{jd(1/p-1/q)}\norm{S_j u}{\NLeb{p}}, \\
\norm{D^k(\Delta_j u)}{\NLeb{p}}&\approx 2^{jk}\norm{\Delta_j u}{\NLeb{p}}.\label{eq:LP-3}
\end{align}
\end{prop}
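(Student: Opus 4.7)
The plan is to prove Proposition \ref{prop:Bernstein} via Fourier multiplier arguments combined with Young's convolution inequality. The key observation is that on the supports of $\hat u$ (balls or annuli), we can freely multiply $\hat u$ by any smooth cutoff that equals $1$ there, and convert any polynomial multiplier (or its reciprocal) into a convolution by a rescaled Schwartz kernel whose $L^r$-norm has an explicit power-of-$R$ scaling.

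\textbf{Upper bound (first inequality).} Fix  $\chi\in C_c^\infty(\mathbb{R}^d)$ with $\chi\equiv 1$ on $B_1$ and $\supp \chi\subset B_2$, and for each multi-index $\beta$ with $|\beta|=k$ set $g_R^\beta(\xi)=(2\pi i\xi)^\beta \chi(\xi/R)$. If $\supp \hat u\subset B_R$, then $\chi(\xi/R)\equiv 1$ on $\supp \hat u$, so $\widehat{D^\beta u}=g_R^\beta\,\hat u$ in $\mathscr{S}'$, whence $D^\beta u = (g_R^\beta)^\vee\ast u$. A change of variables $\xi = R\eta$ shows $g_R^\beta(\xi)=R^{k}g_1^\beta(\xi/R)$, and the Fourier inversion scaling gives $\|(g_R^\beta)^\vee\|_r=R^{k+d(1-1/r)}\|(g_1^\beta)^\vee\|_r$. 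Since $g_1^\beta\in C_c^\infty\subset\mathscr{S}$, $(g_1^\beta)^\vee\in\mathscr{S}\subset L^r$ for every $r\in[1,\infty]$. Choosing $r\in[1,\infty]$ by $1+1/q=1/p+1/r$ (which lies in $[1,\infty]$ because $1\le p\le q\le\infty$) and applying Young's inequality yields
\[
   \|D^\beta u\|_q \le \|(g_R^\beta)^\vee\|_r\|u\|_p \apprle R^{k+d(1/p-1/q)}\|u\|_p,
\]
and taking the max over $|\beta|=k$ gives the first assertion.

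\textbf{Lower bound (reverse inequality on annuli).} Fix $\phi\in C_c^\infty(\mathbb{R}^d)$ with $\phi\equiv 1$ on $\{1/2\le |\xi|\le 1\}$ and $\supp\phi\subset\{1/4\le|\xi|\le 2\}$. Using the multinomial identity $\sum_{|\beta|=k}\frac{k!}{\beta!}\xi^{2\beta}=|\xi|^{2k}$, define
\[
   m_R^\beta(\xi)=\frac{k!}{\beta!}\cdot\frac{\phi(\xi/R)\,\overline{(2\pi i\xi)^\beta}}{(2\pi)^{2k}|\xi|^{2k}},
\]
so that $\sum_{|\beta|=k}m_R^\beta(\xi)(2\pi i\xi)^\beta=\phi(\xi/R)$. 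When $\supp\hat u\subset B_R\setminus B_{R/2}$, this factor equals $1$ on $\supp\hat u$, which gives $u=\sum_{|\beta|=k}(m_R^\beta)^\vee\ast D^\beta u$. Because $\phi$ vanishes near the origin, $m_1^\beta\in C_c^\infty$ and hence $(m_1^\beta)^\vee\in L^1$. Scaling as before yields $m_R^\beta(\xi)=R^{-k}m_1^\beta(\xi/R)$ and $\|(m_R^\beta)^\vee\|_1=R^{-k}\|(m_1^\beta)^\vee\|_1$. Young's inequality with $r=1$ then gives
\[
   \|u\|_p \le \sum_{|\beta|=k}\|(m_R^\beta)^\vee\|_1\|D^\beta u\|_p \apprle R^{-k}\|D^k u\|_p.
\]
Combined with the already-proved upper bound (taking $p=q$), this yields $\|D^k u\|_p\approx R^k\|u\|_p$.

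\textbf{Particular cases and main obstacle.} The three displayed inequalities for $\Delta_j u$ and $S_ju$ are immediate: by construction, $\supp\widehat{S_j u}\subset B_{(4/3)2^j}$ and $\supp\widehat{\Delta_j u}\subset\{(3/4)2^j\le|\xi|\le(8/3)2^j\}$, so applying the two parts already established with $R=2^j$ (up to harmless universal constants absorbed in $\apprle$) gives the results. The only substantive technical point is the lower-bound argument: constructing a single $L^1$ Fourier multiplier that inverts the derivative on the annular spectrum uniformly in $|\beta|=k$, which is the purpose of the multinomial identity and the exact-scaling symbol $m_R^\beta$; everything else is routine Young/scaling manipulation.
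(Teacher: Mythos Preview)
Your proof is correct and is precisely the standard argument the paper has in mind: the paper does not give its own proof but simply states that these Bernstein-type inequalities ``are also easily proved by using Young's convolution inequality'' and cites \cite[Lemma 2.1]{BCD11} and \cite[Lemma 2.4]{WWY21}, which is exactly what you have written out. The only minor wrinkle is that $\supp\widehat{\Delta_j u}\subset\{(3/4)2^j\le|\xi|\le(8/3)2^j\}$ is not literally of the form $B_R\setminus B_{R/2}$, but as you indicate, the lower-bound argument works verbatim once $\phi$ is chosen to equal $1$ on $\{3/4\le|\xi|\le 8/3\}$, so this is indeed absorbed into the implicit constants.
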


The following is the classical Littlewood-Paley characterization of the homogeneous Sobolev space  $\thSob{s}_p$ (see e.g. \cite[Theorem 6.2.7]{G14-1}).
\begin{prop}\label{prop:Littlewood-Paley}
Let $s \in \mathbb{R}$ and $1<p<\infty$.  For all $u \in  \thSob{s}_p$, we
have
\[
\norm{u}{\thSob{s}_p} \approx  \left\| \left(\sum_{j\in \mathbb{Z}} 2^{2js} |{\Delta}_j u |^2 \right)^{1/2} \right\|_p .
\]
\end{prop}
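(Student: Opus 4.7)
Since the result is the classical Littlewood--Paley characterization of $\dot H_p^s$, my plan would be to follow the standard textbook route: reduce to the scalar $L_p$ Littlewood--Paley theorem applied to $\Lambda^s u$, then absorb the factor $(2\pi|\xi|)^s$ into the dyadic scale to produce the weight $2^{js}$ on the right-hand side.

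For the first step, since $u\in \dot H_p^s$ means $\Lambda^s u\in L_p$, and since $\Lambda^s$ commutes with each Fourier multiplier $\Delta_j$ on $\mathscr S_0'$, we have $\Delta_j\Lambda^s u=\Lambda^s\Delta_j u$, and the classical homogeneous $L_p$ Littlewood--Paley inequality applied to $\Lambda^s u$ will give
\[
\|u\|_{\dot H_p^s} \;=\; \|\Lambda^s u\|_p \;\approx\; \Bigl\|\Bigl(\sum_{j\in\mathbb Z}|\Delta_j\Lambda^s u|^2\Bigr)^{1/2}\Bigr\|_p .
\]
I would prove the $L_p$ characterization by the Rademacher--Khintchine method: for each sign sequence $\varepsilon=(\varepsilon_j)\in\{\pm 1\}^{\mathbb Z}$, the Fourier multiplier with symbol $m_\varepsilon(\xi):=\sum_j\varepsilon_j\varphi(\xi/2^j)$ satisfies H\"ormander--Mikhlin's condition $|\xi|^{|\alpha|}|\partial^\alpha m_\varepsilon(\xi)|\le C_\alpha$ uniformly in $\varepsilon$ (because each $\xi\ne 0$ lies in the support of only $O(1)$ of the dyadic cutoffs and each summand scales smoothly), so the associated operator is $L_p$-bounded with norm independent of $\varepsilon$; averaging over $\varepsilon$ via Khintchine's inequality then yields the square-function equivalence in both directions.

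For the second step, I write
\[
\Delta_j\Lambda^s u \;=\; 2^{js}\,\widetilde\Delta_j u, \qquad \widetilde\Delta_j u \;:=\; \widetilde\varphi(\cdot/2^j)^{\vee}*u, \qquad \widetilde\varphi(\eta)\;:=\;(2\pi|\eta|)^s\varphi(\eta)\in C_c^\infty(\mathbb R^d\setminus\{0\}),
\]
so that $(\widetilde\Delta_j)_{j\in\mathbb Z}$ is a smooth bandpass decomposition supported on the same dyadic annuli as $(\Delta_j)_{j\in\mathbb Z}$. The same Rademacher--Khintchine argument applied to signed sums of $\widetilde\Delta_j$, whose symbols $\sum_j\varepsilon_j\widetilde\varphi(\xi/2^j)$ still verify Mikhlin's condition uniformly in $\varepsilon$, gives
\[
\Bigl\|\Bigl(\sum_j 2^{2js}|\widetilde\Delta_j u|^2\Bigr)^{1/2}\Bigr\|_p \;\approx\; \Bigl\|\Bigl(\sum_j 2^{2js}|\Delta_j u|^2\Bigr)^{1/2}\Bigr\|_p ,
\]
and combining with the previous display completes the proof. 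The main obstacle is verifying Mikhlin's condition uniformly in the sign sequence $\varepsilon$, particularly near $\xi=0$; this is precisely why the result is stated on $\dot H_p^s\subset \mathscr S_0'$ with the two-sided sum $\sum_{j\in\mathbb Z}$, so that the ambiguity of constants or polynomials at zero frequency does not contaminate the square function.
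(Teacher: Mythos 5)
The paper does not actually prove this proposition: it is quoted as a classical result with a pointer to \cite[Theorem 6.2.7]{G14-1}, so your sketch is reconstructing the standard textbook argument rather than diverging from anything in the text, and the overall route is the right one. Passing to $f=\Lambda^s u\in L_p$, proving the scalar equivalence $\|f\|_p\approx\|(\sum_j|\Delta_j f|^2)^{1/2}\|_p$ by Mikhlin multipliers plus Khintchine, and then absorbing the symbol $(2\pi|\xi|)^s$ into the dyadic scale via $\Delta_j\Lambda^s u=2^{js}\widetilde{\Delta}_j u$ with $\widetilde{\varphi}(\eta)=(2\pi|\eta|)^s\varphi(\eta)$ is exactly how the cited reference proceeds, and your identity is correct under the paper's normalization of $\Lambda^s$ and of the Littlewood--Paley pieces in Appendix A; working in $\mathscr{S}_0'$ indeed disposes of the zero-frequency ambiguity.

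Two steps, however, are asserted rather than proved. First, the randomization argument only yields the one-sided bound $\|(\sum_j|\Delta_j f|^2)^{1/2}\|_p\apprle\|f\|_p$; the converse does not come from ``averaging in both directions'' but from duality: insert a fattened cutoff $\varphi^{\sharp}\in C_c^\infty$ equal to $1$ on $\supp\varphi$, so that $\Delta_j=\Delta_j^{\sharp}\Delta_j$ where $\Delta_j^{\sharp}$ has symbol $\varphi^{\sharp}(2^{-j}\xi)$, write $\int f\bar g=\sum_j\int \Delta_j f\,\overline{\Delta_j^{\sharp}g}$ (using that $\sum_j\Delta_j f\to f$ in $L_p$ for $f\in L_p$, $1<p<\infty$), and apply Cauchy--Schwarz in $j$ together with the already-proved direction for $g\in L_{p'}$. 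Second, the claimed equivalence $\|(\sum_j 2^{2js}|\widetilde{\Delta}_j u|^2)^{1/2}\|_p\approx\|(\sum_j 2^{2js}|\Delta_j u|^2)^{1/2}\|_p$ does not follow from scalar Mikhlin applied to $\sum_j\varepsilon_j\widetilde{\varphi}(2^{-j}\xi)$: that argument compares a square function with $\|u\|_p$, which is not even available here since $u\in\thSob{s}_p$ need not lie in $L_p$, and it produces no weights $2^{js}$. The standard repair is vector-valued: since $\varphi=\theta\,\widetilde{\varphi}$ with $\theta(\eta)=(2\pi|\eta|)^{-s}\varphi^{\sharp}(\eta)\in C_c^\infty(\mathbb{R}^d\setminus\{0\})$, one has $\Delta_j u=\theta(2^{-j}\cdot)^{\vee}*\widetilde{\Delta}_j u$, hence $|\Delta_j u|\apprle M(\widetilde{\Delta}_j u)$ pointwise with a $j$-independent constant ($M$ the Hardy--Littlewood maximal function), and the Fefferman--Stein vector-valued maximal inequality (or an $\ell^2$-valued multiplier theorem) gives the weighted comparison in both directions; alternatively, split $j$ into residue classes mod $3$ and double-randomize. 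With these two standard supplements your outline becomes a complete proof of the cited result; since the paper itself only quotes it, nothing more would be required.
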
 

\begin{proof}[Proof of Lemma \ref{lem:density}] Suppose that $u \in \Leb{p}\cap \thSob{s}$.
Let $\varepsilon>0$ be given. Then by Proposition \ref{prop:Littlewood-Paley} with $s=0$ and $p=2$, respectively, 
there exists  $N\in \mathbb{N}$ such that
\[	\norm{u_N-u}{\NLeb{p}}+\norm{u_N-u}{\thSob{s}}<\frac{\varepsilon}{2},\]
where $u_N=\sum_{|j|\leq N} \Delta_j u$.
Note that $u_N$ belongs to $L_p \cap L_2$ and has  support in an annulus. Hence  it follows from Proposition \ref{prop:Bernstein} that  $u_N \in H_2^k \cap H_p^k$ for all $k \in \mathbb{N}$.

Choose $\theta \in C_c^\infty(B_2)$ with $\theta=1$ in $B_1$,  and let $\theta_R(x)=\theta(x/R)$ for $R>0$. Fixing an integer $M>N$ (say, $M=N+1$), we  define
\[	u_{N}^R=(I-S_{-M})(\theta_R u_N). \]
Note then that
\begin{equation}\label{eq:approximation-diff}
u_{N}^R \in \mathscr{S}_0 \quad\mbox{and}\quad 	u_{N}^R-u_N=(I-S_{-M})((\theta_R-1)u_N).
\end{equation}
By \eqref{eq:Lp-boundness-Fourier}, we have
\[	\norm{u_{N}^R-u_{N}}{\NLeb{p}}\apprle \norm{(\theta_R-1)u_N}{\NLeb{p}}.\]
Since $u_N \in \Leb{p}$ and $(\theta_R-1)u_N\rightarrow 0$ as $R\rightarrow \infty$, it follows from the dominated convergence theorem that
$$
 \norm{u_{N}^R-u_{N}}{\NLeb{p}}\rightarrow 0 \quad\mbox{as}\,\,R\rightarrow \infty.
$$

Let $k=\max \, \{0,\lfloor s \rfloor +2 \}$, where $\lfloor s \rfloor$ is  the greatest integer less than or equal to $s$.  If $j\geq 0$,  then  by  \eqref{eq:approximation-diff}, \eqref{eq:LP-3}, and  \eqref{eq:Lp-boundness-Fourier}, we have  
\begin{align*}
2^{js}\norm{\Delta_j (u_{N}^R-u_N)}{\NLeb{2}}&\leq 2^{j(k-1)}\norm{\Delta_j (I-S_{-M})((\theta_R-1)u_N)}{\NLeb{2}}\\
&\approx 2^{-j} \norm{D^k[\Delta_j [(I-S_{-M})((\theta_R-1)u_N)]]}{\NLeb{2}}\\
&\apprle 2^{-j} \norm{D^k((\theta_R-1)u_N)}{\NLeb{2}}.
\end{align*}
If $-M-1\leq j\leq -1$, then by \eqref{eq:Lp-boundness-Fourier} and \eqref{eq:approximation-diff},
\[	2^{js}\norm{\Delta_j(u_{N}^R-u_N)}{\NLeb{2}}\apprle 2^{js}\norm{(\theta_R-1)u_N}{\NLeb{2}}.	\]
Finally, noting  that
$$
\Delta_j(u_{N}^R-u_N)=0 \quad\mbox{for all}\,\, j\leq -M-2 ,
$$
we obtain
\begin{align*}
	\norm{u_{N}^R-u_N}{\thSob{s}}^2&\approx \int_{\mathbb{R}^d}\left(  \sum_{j\in \mathbb{Z}} 2^{2js}|\Delta_j (u_{N}^R-u)|^2 \right)  \myd{x}\\
	&\apprle \sum_{j=-M-1}^{-1} 2^{2js}\norm{(\theta_R-1)u_N}{\NLeb{2}}^2+\sum_{j=0}^\infty 2^{-j} \norm{D^k((\theta_R-1)u_N)}{\NLeb{2}}^2\\
&\apprle  \norm{(\theta_R-1)u_N}{\tSob{k}}^2 .
\end{align*}
 Hence by  the dominated convergence theorem again, we have
$$
\norm{u_{N}^R-u_N}{\thSob{s}}^2 \to 0 \quad\mbox{as}\,\, R\rightarrow \infty.
$$
 This completes the proof of Lemma \ref{lem:density}.
\end{proof}

\section{Proof of Lemma \ref{prop:Fourier-transform-Gaussian-derivatives}}\label{sec:Gaussian}

Our proof of Lemma \ref{prop:Fourier-transform-Gaussian-derivatives} is based on the following well-known result (see e.g. \cite[Chapter V, Lemma 2]{S70} and \cite[Theorem 2.4.6]{G14-2}).

\begin{lem}\label{prop:Fourier-transform-Gaussian}
If $0<\Re \lambda<d$, then
\begin{equation}\label{eq:Fourier-transform-identity}
   \int_{\mathbb{R}^d} \frac{1}{|\xi|^\lambda} \phi^{\vee}(\xi) \,d\xi= \frac{\Gamma((d-\lambda)/2) }{\Gamma(\lambda/2)\pi^{d/2- \lambda } }\int_{\mathbb{R}^d} \frac{1}{ |x|^{d-\lambda}}  \phi(x) \, dx
\end{equation}
 for all $\phi \in \mathscr{S}$.
\end{lem}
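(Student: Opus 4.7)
The plan is to exploit the Gamma-function representation
\[
\frac{1}{|\xi|^\lambda} \;=\; \frac{\pi^{\lambda/2}}{\Gamma(\lambda/2)} \int_0^\infty t^{\lambda/2-1}\, e^{-\pi t |\xi|^2}\, dt \qquad (\Re \lambda > 0),
\]
which follows from the defining integral for $\Gamma$ via the substitution $s=\pi t|\xi|^2$. Since this exhibits $|\xi|^{-\lambda}$ as a superposition of Gaussians (whose inverse Fourier transforms are explicit), the strategy is to move the inverse Fourier transform from $|\xi|^{-\lambda}$ onto the Gaussian and then evaluate in closed form.

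Concretely, for $\phi\in\mathscr{S}$ and $0<\Re\lambda<d$, I will multiply both sides by $\phi^\vee(\xi)$ and integrate over $\xi$. For each fixed $t>0$ the Gaussian $e^{-\pi t|\cdot|^2}$ lies in $L^1$, so the elementary duality
\[
\int_{\mathbb{R}^d} e^{-\pi t|\xi|^2}\phi^\vee(\xi)\,d\xi \;=\; \int_{\mathbb{R}^d}(e^{-\pi t|\cdot|^2})^\vee(x)\,\phi(x)\,dx
\]
holds by Fubini, and the classical self-dual Gaussian computation gives $(e^{-\pi t|\cdot|^2})^\vee(x)=t^{-d/2}e^{-\pi|x|^2/t}$. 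Invoking Fubini once more to swap the $t$-integral with the $x$-integral yields
\[
\int_{\mathbb{R}^d}\frac{\phi^\vee(\xi)}{|\xi|^\lambda}\,d\xi \;=\; \frac{\pi^{\lambda/2}}{\Gamma(\lambda/2)} \int_{\mathbb{R}^d}\phi(x)\int_0^\infty t^{(\lambda-d)/2-1}\,e^{-\pi|x|^2/t}\,dt\,dx.
\]
The substitution $s=\pi|x|^2/t$ evaluates the inner integral to $\pi^{(\lambda-d)/2}\,\Gamma((d-\lambda)/2)\,|x|^{\lambda-d}$, and assembling the prefactors gives $\pi^{\lambda - d/2}\Gamma((d-\lambda)/2)/\Gamma(\lambda/2)$, which matches the claimed constant.

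The main obstacle is the justification of the two applications of Fubini, and relatedly the handling of complex $\lambda$. For the first swap, the absolute integrand is $t^{\Re\lambda/2-1}e^{-\pi t|\xi|^2}|\phi^\vee(\xi)|$; its $t$-integral equals $\pi^{-\Re\lambda/2}\Gamma(\Re\lambda/2)|\xi|^{-\Re\lambda}|\phi^\vee(\xi)|$, which is integrable in $\xi$ precisely because $0<\Re\lambda<d$ (integrability near $\xi=0$) and $\phi^\vee\in\mathscr{S}$ (decay at infinity). The second swap is analogous, with $|x|^{-(d-\Re\lambda)}|\phi(x)|$ integrable by the same range of $\Re\lambda$. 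Every step above is then valid for complex $\lambda$ with $0<\Re\lambda<d$; alternatively, one can first establish the identity for real $\lambda\in(0,d)$ and extend by analytic continuation, since for each fixed $\phi\in\mathscr{S}$ both sides are holomorphic in $\lambda$ on the strip $\{0<\Re\lambda<d\}$.
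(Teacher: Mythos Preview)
Your proof is correct. The paper does not supply its own proof of this lemma; it simply cites it as a well-known result, referring to Stein's \emph{Singular Integrals} (Chapter~V, Lemma~2) and Grafakos's \emph{Classical Fourier Analysis} (Theorem~2.4.6). The argument you give --- representing $|\xi|^{-\lambda}$ via the Gamma integral as a superposition of Gaussians, applying the self-dual Gaussian formula, and justifying the two Fubini swaps using $0<\Re\lambda<d$ --- is precisely the standard proof found in those references, so your approach aligns with what the paper implicitly invokes.
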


We will also use the method of analytic continuation. The following result is useful to show the analyticity of some functions defined by integrals.

\begin{lem}\label{lem:analyticity}
Fix $\phi \in \mathscr{S}$ and $\beta \in {\mathbb N}_0^d$. Then the function
\[   f (z)  = \int_{\mathbb{R}^d}  x^\beta |x|^z  \phi(x)\myd{x} \]
is holomorphic in the region $H=\{z \in \mathbb{C} : \Re z >-d -|\beta |\}$.
\end{lem}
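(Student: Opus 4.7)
The plan is to establish holomorphy via the standard criterion for functions defined by integrals: show that the integrand $F(z,x) = x^\beta |x|^z \phi(x)$ is holomorphic in $z$ for each fixed $x$, and that on every compact subset of $H$ the modulus $|F(z,x)|$ is dominated by a single $x$-integrable function. Fixing $x \neq 0$, we have $|x|^z = e^{z \log |x|}$, so $z \mapsto F(z,x)$ is entire for each such $x$.

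First I would check absolute convergence for each fixed $z \in H$. Write $\mathbb{R}^d = B_1 \cup (\mathbb{R}^d \setminus B_1)$. On $B_1$, the bound $|F(z,x)| \le \|\phi\|_\infty |x|^{|\beta|+\Re z}$ gives integrability because $|\beta|+\Re z > -d$. On $\mathbb{R}^d \setminus B_1$, Schwartz decay gives $|\phi(x)| \le C_N (1+|x|)^{-N}$ for any $N$, so $|F(z,x)| \le C_N |x|^{|\beta|+\Re z}(1+|x|)^{-N}$ is integrable by choosing $N$ large enough. Hence $f(z)$ is well-defined on $H$.

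Next I would produce the uniform domination needed for holomorphy. Let $K \subset H$ be compact, and set $a = \min_{z \in K} \Re z > -d-|\beta|$ and $b = \max_{z \in K} \Re z$. For every $z \in K$ and every $x \neq 0$,
\[
|F(z,x)| \le |x|^{|\beta|}\bigl(|x|^a \mathbf{1}_{|x| \le 1} + |x|^b \mathbf{1}_{|x| > 1}\bigr)|\phi(x)| =: g(x).
\]
By the two bounds above (together with Schwartz decay handling the $|x|^b$ factor at infinity), $g \in L^1(\mathbb{R}^d)$.

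Finally I would conclude holomorphy. The cleanest route is Morera combined with Fubini: continuity of $z \mapsto f(z)$ on $H$ follows from the dominated convergence theorem using $g$, and for any closed triangular contour $\gamma \subset K$,
\[
\oint_\gamma f(z)\,dz = \int_{\mathbb{R}^d} x^\beta \phi(x) \left(\oint_\gamma |x|^z\,dz\right) dx = 0,
\]
where the Fubini exchange is justified by the $L^1$ domination of $|F(z,x)|$ on $\gamma$ by $g(x)$, and the inner contour integral vanishes because $z \mapsto |x|^z$ is entire for each fixed $x \neq 0$ (Cauchy's theorem). By Morera's theorem, $f$ is holomorphic on $H$. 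No step poses a real obstacle; the only subtle point is verifying that the domination at the origin survives the full range of $\Re z$ on $K$, which is handled by the strict inequality $a > -d-|\beta|$.
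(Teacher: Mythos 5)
Your proof is correct, but it takes a different route from the paper. The paper argues directly on the difference quotient: fixing $z$ and a small closed disc $\overline{B_\varepsilon(z)}\subset H$, it bounds $\left|\frac{|x|^{w}-|x|^{z}}{w-z}\right|$ by $|\log|x||$ times a power of $|x|$ whose exponent still exceeds $-d-|\beta|$, and then applies the dominated convergence theorem to the quotient itself; this yields holomorphy together with the explicit formula $f'(z)=\int_{\mathbb{R}^d} x^\beta (\log|x|)\,|x|^z\phi(x)\,dx$ in one stroke. You instead establish local $L^1$-domination of the integrand on compact subsets of $H$ (splitting at $|x|=1$ with exponents $a=\min_K\Re z$ and $b=\max_K\Re z$), deduce continuity of $f$ by dominated convergence, and conclude via Fubini and Morera, using that $z\mapsto|x|^z$ is entire for each $x\neq 0$. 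Both arguments rest on the same integrability facts near the origin (exponent $>-d$ after absorbing $|x|^{|\beta|}$) and at infinity (Schwartz decay); yours avoids estimating the difference quotient — and in particular sidesteps the maximum-of-$|x|^\eta$ bookkeeping that the paper handles somewhat tersely — at the cost of not producing the derivative formula, which the paper does not actually use later, so nothing is lost. The only point worth making explicit in your write-up is that the triangular contour $\gamma$ should be taken inside a compact subset of $H$ (e.g.\ its convex hull), so that your dominating function $g$ applies uniformly along $\gamma$; as stated this is implicit but harmless.
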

\begin{proof}
Let $z\in H  $ be fixed. Choose $\varepsilon >0$ such that $\{w \in \mathbb{C} : |w-z| \le \varepsilon \}\subset H$.

Suppose that  $x\neq 0$. Then the function $  |x|^w =   \exp ( w \ln |x|)$ is entire and
\[
\frac{d}{dw} \left[ |x|^w \right] = ( \log |x| ) |x|^{w} .
\]
Hence for all $w \in \overline{B_\varepsilon(z)}$ with $w \neq z$,
\[    \left|\frac{|x|^{w}-|x|^z}{w -z} \right|\leq  | \log|x| |\max_{|\eta -z| \le \varepsilon  } ||x|^{\eta}|=   | \log |x||  |x|^{ \Re z + \varepsilon}.
\]
 Moreover, since $\Re z + \varepsilon > -d  -|\beta|$, we have
\[
\int_{\mathbb{R}^d} \left|   x^\beta \log |x|   |x|^{\Re z + \varepsilon} \phi(x)\right| \myd{x} = \int_{\mathbb{R}^d} | \log |x| |  |x|^{\Re z + \varepsilon  + |\beta|} | \phi(x) | \myd{x} < \infty.
\]
Hence it follows from the dominated convergence theorem that
\[    \lim_{w\rightarrow z} \frac{f(w)-f(z)}{w- z} = \int_{\mathbb{R}^d} x^\beta (\log |x| ) |x|^z  \phi(x) \, dx.  \]
 This completes the proof of Lemma \ref{lem:analyticity}.
\end{proof}

Recall that
\[   (D^\beta \phi)^{\vee} (\xi)=(-2\pi i \xi)^\beta {\phi}^{\vee} (\xi),
\quad  D^\beta \widehat{\phi}(\xi)= \left( (-2\pi i x)^\beta \phi \right)^{\wedge} (\xi)     \]
for   $ \phi \in \mathscr{S}$ and $\beta \in \mathbb{N}_0^d$.

\begin{proof}[Proof of Lemma \ref{prop:Fourier-transform-Gaussian-derivatives} (1)]  Assume  that $0< \Re \lambda < d$. Set $\phi =D_j \psi $.
Then since $ {\phi}^{\vee}(\xi) = (-2 \pi i \xi_j ){\psi}^{\vee}(\xi)$, it follows from Lemma \ref{prop:Fourier-transform-Gaussian} that
\[
  \int_{\mathbb{R}^d} \frac{-2 \pi i \xi_j}{|\xi|^{\lambda}}  {\psi}^{\vee}(\xi) \, d\xi = \frac{\Gamma((d-\lambda)/2) }{\Gamma(\lambda/2) \pi^{d/2- \lambda } }
  \int_{\mathbb{R}^d}  \frac{1}{|x|^{d-\lambda}} D_j \psi(x) \, dx.
\]
By the fundamental property of the Gamma function,
\[
 (d-\lambda)  \int_{\mathbb{R}^d} \frac{-2 \pi i \xi_j}{|\xi|^{\lambda}}  {\psi}^{\vee}(\xi) \, d\xi
   =\frac{2\Gamma((d+2-\lambda)/2)  }{  \Gamma( \lambda/2)\pi^{d/2- \lambda }  } \int_{\mathbb{R}^d}  \frac{1}{|x|^{d-\lambda}} D_j \psi(x) \, dx ,
\]
which holds for $0< \Re \lambda < d+1$, by analytic continuation  based on Lemma \ref{lem:analyticity}.

Assume next that $1< \Re \lambda < d+1$.
Then performing the integration by parts, we easily obtain
\[
\int_{\mathbb{R}^d}  \frac{1}{|x|^{d-\lambda}} D_j \psi(x) \, dx =  (d-\lambda ) \int_{\mathbb{R}^d}  \frac{x_j}{|x|^{d+2-\lambda }} \psi(x) \, dx .
\]
Hence, if $d< \Re \lambda < d+1$, then
\[
\int_{\mathbb{R}^d} \frac{-2 \pi i \xi_j}{|\xi|^{\lambda}}  {\psi}^{\vee}(\xi) \, d\xi
   =\frac{2\Gamma((d+2-\lambda)/2)  }{  \Gamma( \lambda/2)\pi^{d/2- \lambda }  } \int_{\mathbb{R}^d}  \frac{x_j}{|x|^{d+2-\lambda }} \psi(x) \, dx .
\]
This identity holds for $1< \Re \lambda < d+1$, by Lemma \ref{lem:analyticity} and analytic continuation again. This completes the proof of Lemma \ref{prop:Fourier-transform-Gaussian-derivatives} (1).
\end{proof}

\begin{proof}[Proof of Lemma \ref{prop:Fourier-transform-Gaussian-derivatives} (2)] Assume  that $1< \Re \lambda < d+1$.
Then by Lemma \ref{prop:Fourier-transform-Gaussian-derivatives} (1),
\[
\int_{\mathbb{R}^d} \frac{-2 \pi i \xi_j}{|\xi|^{\lambda}}  {\phi}^{\vee}(\xi) \, d\xi
   =\frac{2\Gamma((d+2-\lambda)/2)  }{  \Gamma( \lambda/2)\pi^{d/2- \lambda }  } \int_{\mathbb{R}^d}  \frac{x_j}{|x|^{d+2-\lambda }} \phi(x) \, dx.
\]
for all $\phi \in \mathscr{S}$.
Setting  $\phi =D_{k} \psi$, we have  
\[
 \int_{\mathbb{R}^d}   \frac{(-2 \pi i \xi_j )(-2 \pi i \xi_k )}{|\xi|^\lambda}  {\psi}^{\vee}(\xi) \, d\xi=\frac{2\Gamma((d+2-\lambda)/2)  }{  \Gamma( \lambda/2)\pi^{d/2- \lambda }  } \int_{\mathbb{R}^d}  \frac{x_j}{|x|^{d+2-\lambda }} D_k \psi(x) \, dx,
\]
which holds for $1< \Re \lambda < d+2$, by   analytic continuation.
Moreover, if   $d+1< \Re \lambda < d+2$, then
\[
\int_{\mathbb{R}^d}  \frac{x_j}{|x|^{d+2-\lambda }} D_{k} \psi(x) \, dx=  \int_{\mathbb{R}^d}  \frac{(d+2-\lambda )x_j x_k -\delta_{jk} |x|^2}{|x|^{d+4-\lambda }}   {\psi}(x)\myd{x} .
\]
Therefore, the proof of Lemma \ref{prop:Fourier-transform-Gaussian-derivatives} (2) is completed by analytic continuation.
\end{proof}

\begin{proof}[Proof of Lemma \ref{prop:Fourier-transform-Gaussian-derivatives} (3)]
Assume  that $2< \Re \lambda < d+2$.
Then by Lemma \ref{prop:Fourier-transform-Gaussian-derivatives} (2),
\begin{align*}
& \int_{\mathbb{R}^d}   \frac{(-2 \pi i \xi_j )(-2 \pi i \xi_k )}{|\xi|^\lambda}  {\phi}^{\vee}(\xi) \, d\xi \\
&\quad  = \frac{2\Gamma((d+2-\lambda)/2)  }{  \Gamma( \lambda/2) \pi^{d/2-\lambda}}   \int_{\mathbb{R}^d}   \frac{(d+2-\lambda )x_j x_k -\delta_{jk} |x|^2}{|x|^{d+4-\lambda }}   {\phi}(x) \, dx
\end{align*}
for all $\phi \in \mathscr{S}$. Setting $\phi =D_{l} \psi$, we have
\begin{align*}
 &\int_{\mathbb{R}^d}   \frac{(-2\pi i \xi_j)(-2\pi i \xi_k)(-2\pi i \xi_l)}{|\xi|^\lambda} {\psi}^{\vee}(\xi) \, d\xi \\
 &\quad =  \frac{2\Gamma((d+2-\lambda)/2)  }{  \Gamma( \lambda/2) \pi^{d/2-\lambda}}   \int_{\mathbb{R}^d}   \frac{(d+2-\lambda )x_j x_k -\delta_{jk} |x|^2}{|x|^{d+4-\lambda }}  D_l {\psi}(x) \, dx  \\
 &\quad = \frac{4\Gamma((d+4-\lambda)/2) }{(d+2-\lambda ) \Gamma( \lambda/2) \pi^{d/2-\lambda}}
   \int_{\mathbb{R}^d}   \frac{(d+2-\lambda )x_j x_k -\delta_{jk} |x|^2}{|x|^{d+4-\lambda }}  D_l {\psi}(x) \, dx ,
 \end{align*}
which holds  for $2< \Re \lambda < d+3$ with $\lambda \neq d+2$, by the analytic continuation.
Moreover, if $3< \Re \lambda < d+3$, then
\[
\int_{\mathbb{R}^d} \frac{1}{|x|^{d+2-\lambda}} D_{l} {\psi}(x) \myd{x}=  (d+2-\lambda ) \int_{\mathbb{R}^d}  \frac{x_l}{|x|^{d+4-\lambda }}  \psi(x) \, dx .
\]
Hence for $d+2< \Re \lambda < d+3$, we have
\begin{align*}
 & \int_{\mathbb{R}^d}   \frac{(-2\pi i \xi_j)(-2\pi i \xi_k)(-2\pi i \xi_l)}{|\xi|^\lambda} {\psi}^{\vee}(\xi) \, d\xi \\
  & \quad  = \frac{4\Gamma((d+4-\lambda)/2)  }{ \Gamma( \lambda/2) \pi^{d/2-\lambda}}   \int_{\mathbb{R}^d} \left[- D_l \left( \frac{   x_j x_k}{|x|^{d+4-\lambda }} \right) - \frac{\delta_{jk}   x_l }{|x|^{d+4-\lambda }} \right]   {\psi}(x)\myd{x} .
\end{align*}
The proof of Lemma \ref{prop:Fourier-transform-Gaussian-derivatives} (3) is completed by analytic continuation.
\end{proof}

\section{Proof of Lemma \ref{lem:de-Rham}} \label{sec:deRham}

This section is devoted to proving Lemma \ref{lem:de-Rham}.  It is a classical result due to de Rham \cite{dR73} that if a distribution $\boldu$ satisfies
\begin{equation}\label{eq:deRham}
\action{\boldu,\Phi}=0\quad \text{ for all } \Phi \in C_{c,\sigma}^\infty,
\end{equation}
then $\boldu=\nabla p$ for some distribution $p$. We prove that if $\boldu $ is a tempered distribution, then $p$ is also a tempered distribution by following the idea of Wang \cite{W93}. It will be first  shown  (see Lemma \ref{lem:representation} below) that  for each $g\in \mathscr{S}(\mathbb{R}^d)$ with $\int_{\mathbb{R}^d} g \myd{x}=0$, there exists  $\boldw_g \in \mathscr{S}(\mathbb{R}^d;\mathbb{R}^d)$ such that
\[	\Div \boldw_g = g  \quad \text{in }
\mathbb{R}^d .\]
The following lemma is necessary to show that $\boldw_g \in \mathscr{S}(\mathbb{R}^d;\mathbb{R}^d)$ and the mapping $g \mapsto \boldw_g$ is linear and continuous.

\begin{lem}\label{lem:schwartz-integral}
For a fixed $1\leq j\leq d$, let $\mathscr{S}_{\#}^{(j)}$ be the subspace of $\mathscr{S}$ that consists of all $u\in \mathscr{S}$ satisfying
\[	\int_{-\infty}^\infty  u(x_1,\dots,x_{j-1},t,x_{j+1},\dots,x_d)\myd{t}=0 \]
for all $(x_1, ..., x_{j-1}, x_{j+1}, ..., x_d)\in \mathbb{R}^{d-1}$. For each  $u\in \mathscr{S}_{\#}^{(j)}$, we define $T^{(j)}(u):\mathbb{R}^d\rightarrow \mathbb{R}$ by
\[	T^{(j)}(u)(x)=\int_{-\infty}^{x_j} u(x_1,\dots,x_{j-1},t,x_{j+1},\dots,x_d)\myd{t}\quad \text{for all } x\in \mathbb{R}^d. \]
Then $T^{(j)}$ is a linear operator from $\mathscr{S}_{\#}^{(j)}$ into $\mathscr{S}$.  Moreover, if $u_k \in \mathscr{S}_{\#}^{(j)}$ for each $k\in \mathbb{N}$  and $u_k\rightarrow 0$ in $\mathscr{S}$, then $T^{(j)}(u_k)\rightarrow 0$ in $\mathscr{S}$.
\end{lem}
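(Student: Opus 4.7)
The plan is to reduce everything to a single a priori bound of the form
\[
\sup_{x \in \mathbb{R}^d} (1+|x|)^{M} |D^\beta T^{(j)}(u)(x)| \le C_{M,\beta}\,\rho_{M',N'}(u),
\]
where $\rho_{M',N'}$ ranges over the standard Schwartz seminorms. This single estimate, valid for any multi-index $\beta$ and any $M \ge 0$, simultaneously shows $T^{(j)}(u) \in \mathscr{S}$ and the sequential continuity claim; linearity is immediate from the definition.

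The first step is standard: for $u \in \mathscr{S}$ the integral converges absolutely, and differentiation under the integral sign yields $T^{(j)}(u) \in C^\infty$ with $\partial_j T^{(j)}(u) = u$ and $\partial_i T^{(j)}(u) = T^{(j)}(\partial_i u)$ for $i \neq j$. Since $\partial_i u \in \mathscr{S}_{\#}^{(j)}$ whenever $u \in \mathscr{S}_{\#}^{(j)}$ and $i \neq j$, an easy induction gives, for each multi-index $\beta$,
\[
D^\beta T^{(j)}(u) = \begin{cases} T^{(j)}(D^\beta u), & \beta_j = 0,\\ D^{\beta - e_j} u, & \beta_j \ge 1. \end{cases}
\]
The case $\beta_j \ge 1$ is trivial because $D^{\beta - e_j} u$ already belongs to $\mathscr{S}$. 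It therefore suffices to bound $(1+|x|)^M |T^{(j)}(v)(x)|$ for an arbitrary $v \in \mathscr{S}_{\#}^{(j)}$, applied eventually to $v = D^\beta u$ with $\beta_j = 0$.

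The decisive use of the mean-zero condition is that for $x_j \ge 0$ we may rewrite
\[
T^{(j)}(v)(x) = -\int_{x_j}^{\infty} v(x_1,\dots,t,\dots,x_d)\,dt,
\]
whereas for $x_j \le 0$ we retain the original representation; in both cases the range of integration is $\{t : |t| \ge |x_j|\}$. Writing $x' = (x_1,\dots,x_{j-1},x_{j+1},\dots,x_d)$ and using the elementary inequality $(1+|y|)^{-2N} \le C_N(1+|x'|)^{-N}(1+|t|)^{-N}$ with $y = (x_1,\dots,t,\dots,x_d)$, together with the Schwartz bound $|v(y)| \le C_N\,\rho_N(v)(1+|y|)^{-2N}$, one obtains, for any $N \ge 2$,
\[
|T^{(j)}(v)(x)| \le C_N\,\rho_N(v)\,(1+|x'|)^{-N}\int_{|t|\ge |x_j|}(1+|t|)^{-N}\,dt \le \frac{C'_N\,\rho_N(v)}{(1+|x'|)^N(1+|x_j|)^{N-1}}.
\]
Combining this with $1+|x| \le 2(1+|x'|)(1+|x_j|)$ gives $(1+|x|)^{N-1}|T^{(j)}(v)(x)| \le C\,\rho_N(v)$, and taking $N$ large and $v = D^\beta u$ yields the required bound with the right-hand side a fixed Schwartz seminorm of $u$.

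The only conceptual point (and the step I expect to need the most care) is the asymmetric treatment of the two half-lines $\{x_j \le 0\}$ and $\{x_j \ge 0\}$: without the vanishing total integral, $T^{(j)}(v)(x)$ would tend to a nonzero constant as $x_j \to +\infty$ and fail to be Schwartz. Once the alternate representation is written down, the decay estimates become routine bookkeeping with polynomial weights.
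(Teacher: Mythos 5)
Your proof is correct and follows essentially the same route as the paper's: differentiate under the integral to reduce to the case $\beta_j=0$, use the mean-zero condition (inherited by $x'$-derivatives of $u$) to switch to integration over $\{t:|t|\ge|x_j|\}$ on the half-line $x_j\ge 0$, and bound the tail integral by Schwartz seminorms of $u$. The only cosmetic difference is that you package everything as a single quantitative seminorm estimate, which yields membership in $\mathscr{S}$ and the sequential continuity at once, whereas the paper runs a separate $\varepsilon$-argument for continuity.
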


\begin{proof} We prove the lemma only for the case when $d\geq 2$ and $j=1$.

Suppose that
$u\in \mathscr{S}_{\#}^{(1)}$ and $v=T^{(1)}(u)$.
Obviously, $v$ is smooth on $\mathbb{R}^d$. Moreover for all $\beta=(\beta_1,\beta')\in \mathbb{N}_0^d$, we have
\[	D^\beta v(x) =\begin{dcases}
\int_{-\infty}^{x_1} D^{\beta'}_{x'} u(t,x')\myd{t}&\quad\text{if } \beta_1=0,\\
D^{\beta_1-1}_1 D^{\beta'}_{x'} u(x_1,x') &\quad \text{if } \beta_1>0.
\end{dcases} \]
Hence to show that $v \in \mathscr{S}$, it remains  to show that
$$
\sup_{x \in \mathbb{R}^d} \left| x^\alpha D^{\beta'}_{x'} v(x) \right| < \infty
$$
for every $\beta'\in\mathbb{N}_0^{d-1}$ and $\alpha \in \mathbb{N}_0^d$.

Let $\beta' \in \mathbb{N}_0^{d-1}$ and $\alpha = (\alpha_1 , \alpha ') \in \mathbb{N}_0^d$ be fixed. Then since
\[	\int_{-\infty}^{\infty} D^{\beta'}_{x'} u(t,x')\myd{t}=D^{\beta'}_{x'} \int_{-\infty}^{\infty} u(t,x')\myd{t}=0,\]
 it follows that
\[	x^\alpha D^{\beta'}_{x'} v(x) =x_1^{\alpha_1} \int_{-\infty}^{x_1} (x')^{\alpha'} D^{\beta'}_{x'} u(t,x')\myd{t}=-x_1^{\alpha_1}\int_{x_1}^\infty (x')^{\alpha'} D^\beta_{x'} u(t,x') \myd{t}  \]
for all $x=(x_1 , x') \in \mathbb{R}^d$.
 On the other hand, since $u\in \mathscr{S}$, there exists a constant $C $ such that
\[		\sup_{(t,x')\in \mathbb{R}^d} |(1+|t|)^{\alpha_1+2} (x')^{\alpha'} D^{\beta'}_{x'} u(t,x')|\leq C .	\]
Hence for all $x=(x_1 , x') \in \mathbb{R}^d$ with $x_1 \geq 0$, we have
\begin{align*}	
\left|x^{\alpha}  D^{\beta'}_{x'} v(x)\right|&\leq x_1^{\alpha_1}\int_{x_1}^\infty \left|(x')^{\alpha'} D^{\beta'}_{x'} u(t,x') \right| \myd{t}\\
&\leq x_1^{\alpha_1} \int_{x_1}^\infty \frac{C}{(1+|t|)^{\alpha_1+2}}dt \leq C(\alpha , \beta' ).
\end{align*}
Similarly, if $x=(x_1 , x') \in \mathbb{R}^d$ and $x_1\leq 0$, then
\[	\left|x^{\alpha} D_{x'}^{\beta'} v(x) \right| \leq | x_1 |^{\alpha_1} \int_{-\infty}^{x_1} \left|(x')^{\alpha'} D_{x'}^{\beta'} u(t,x') \right| dt \leq C(\alpha , \beta' ).	\]
This proves that $v\in \mathscr{S}$.

We have shown that $T^{(1)}$ maps $\mathscr{S}_{\#}^{(1)}$ to $\mathscr{S}$. Linearity of $T^{(1)}$ is obvious from the very definition of $T^{(1)}$. Hence to complete the proof, it remains to prove the continuity property of $T^{(1)}$.

Suppose that  $u_k\in \mathscr{S}_{\#}^{(1)}$ and $u_k\rightarrow 0$ in $\mathscr{S}$. Let $\alpha,\beta \in \mathbb{N}^d_0$ be fixed. Then since $u_k\rightarrow 0$ in $\mathscr{S}$, it   follows that if $\beta_1>0$, then
\[	x^{\alpha} D^\beta T^{(1)}(u_k)(x) = x^{\alpha} D^{\beta_1-1}_1 D^{\beta'}_{x'} u_k(x) \rightarrow 0 \]
uniformly on $\mathbb{R}^d$. Moreover,    for any $\varepsilon>0$, there exists $N \in \mathbb{N}$ such that
\[ \sup_{x \in \mathbb{R}^d} \left|(1+|x_1|)^{\alpha_1+2}(x')^{\alpha'} D^{\beta'}_{x'} u_k(x) \right|<\varepsilon \]
for all $k\geq N$.  If $x=(x_1 , x') \in \mathbb{R}^d$ and $x_1\geq 0$, then for all $k\geq N$,
\[	\left| x^{\alpha} D^{\beta'}_{x'} T^{(1)}(u_k)(x ) \right|\leq \varepsilon x_1^{\alpha_1} \int_{x_1}^\infty \frac{1}{(1+|t|)^{\alpha_1+2}}dt<C\varepsilon.\]
Similar estimates hold for $x_1\leq 0$.  This implies that $T^{(1)}(u_k)\rightarrow 0$ in $\mathscr{S}$, which completes the proof of Lemma \ref{lem:schwartz-integral}.
\end{proof}

\begin{lem}\label{lem:representation} Fix   $\phi_i \in C_c^\infty(\mathbb{R})$ with $\int_{\mathbb{R}} \phi_i\myd{x}=1$ ($i=1,2,\dots,d$) and define
$$
\phi(x)=\phi_1(x_1)\cdots \phi_d(x_d) \quad \mbox{for all}\,\, x =(x_1 , \ldots , x_d ) \in \mathbb{R}^d .
$$
Then there exists a continuous linear operator $\mathcal{B}$ from $\mathscr{S}(\mathbb{R}^d)$ to $\mathscr{S}(\mathbb{R}^d;\mathbb{R}^d)$ such that
\begin{equation}\label{eq:rep-1}
   \Div \mathcal{B}(g) =g- \phi \int_{\mathbb{R}^d} g \myd{x}\quad \mbox{in}\,\,\mathbb{R}^d
\end{equation}
for all $g \in \mathscr{S}(\mathbb{R}^d)$.
\end{lem}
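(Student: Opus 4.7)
The plan is to construct $\mathcal{B}$ by a telescoping decomposition that reduces the problem to $d$ independent one-dimensional antiderivatives, each produced by the operator $T^{(j)}$ supplied by Lemma~\ref{lem:schwartz-integral}. First I would iteratively integrate out one variable at a time, defining $g_0 = g$ and
\[
g_k(x_{k+1}, \ldots, x_d) = \int_{\mathbb{R}} g_{k-1}(t, x_{k+1}, \ldots, x_d)\, dt, \qquad 1 \le k \le d,
\]
so that $g_d = \int_{\mathbb{R}^d} g\,dx$. I would then verify, by a tail estimate of the same flavor as the one carried out in the proof of Lemma~\ref{lem:schwartz-integral}, that $g_k \in \mathscr{S}(\mathbb{R}^{d-k})$ and that the map $g \mapsto g_k$ is continuous on $\mathscr{S}$.

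Next, for $1 \le j \le d$ I would set
\[
u_j(x) = \phi_1(x_1)\cdots \phi_{j-1}(x_{j-1})\bigl[g_{j-1}(x_j, \ldots, x_d) - \phi_j(x_j)\, g_j(x_{j+1}, \ldots, x_d)\bigr],
\]
with the empty product interpreted as $1$ when $j=1$, and define $w^j = T^{(j)}(u_j)$. Since $\int_{\mathbb{R}} \phi_j(t)\, dt = 1$ and $\int_{\mathbb{R}} g_{j-1}(t, x_{j+1}, \ldots, x_d)\, dt = g_j(x_{j+1}, \ldots, x_d)$, each $u_j$ lies in $\mathscr{S}_{\#}^{(j)}$, so Lemma~\ref{lem:schwartz-integral} guarantees that $w^j \in \mathscr{S}(\mathbb{R}^d)$ and depends continuously on $g$.

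Setting $\mathcal{B}(g) = (w^1, \ldots, w^d)$, the identity $\partial_j w^j = u_j$ combined with the telescoping
\[
\sum_{j=1}^d u_j = g - \phi_1(x_1)\cdots \phi_d(x_d)\, g_d = g(x) - \phi(x)\int_{\mathbb{R}^d} g\, dy
\]
yields \eqref{eq:rep-1}. Linearity is immediate from the construction, and the continuity of $\mathcal{B}:\mathscr{S}(\mathbb{R}^d)\to\mathscr{S}(\mathbb{R}^d;\mathbb{R}^d)$ follows by composing the continuity of $g \mapsto g_{j-1}, g_j$, the fact that multiplication by the fixed Schwartz functions $\phi_1,\ldots,\phi_j$ is a continuous operation on $\mathscr{S}$, and the continuity of each $T^{(j)}$ established in Lemma~\ref{lem:schwartz-integral}.

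The main obstacle, I expect, is the auxiliary claim that the partial integrals $g_k$ lie in $\mathscr{S}(\mathbb{R}^{d-k})$ continuously in $g$. This is slightly bookkeeping-heavy but routine: one exploits the rapid decay of $g$ by using Schwartz seminorms with a sufficiently high weight $(1+|t|)^{N+2}$ in the integrated variables to absorb the $t$-integrations and produce the desired decay and smoothness in $(x_{k+1},\ldots,x_d)$. Once this is in place, everything else falls out of Lemma~\ref{lem:schwartz-integral} and the telescoping identity above.
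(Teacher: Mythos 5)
Your proposal is correct and takes essentially the same route as the paper: after simplification, the paper's terms $S^{(j)}(g)$ (built from $\overline{g}=g-\phi\int g$) coincide exactly with your $u_j$, and both arguments then apply $T^{(j)}$ from Lemma \ref{lem:schwartz-integral} and sum the telescoping identity to get \eqref{eq:rep-1}. The auxiliary facts you flag (that the partial integrals $g_k$ are Schwartz and depend continuously on $g$) are likewise treated as routine in the paper.
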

\begin{proof} For the sake of notational simplicity, we mainly focus on the case $d=3$. The general case can be proved in a similar way.

 For a fixed $g\in   \mathscr{S}(\mathbb{R}^3)$, we define
\begin{equation*}\label{eq:w-integral}
\overline{g}=g-\phi\int_{\mathbb{R}^3} g \myd{x}.
\end{equation*}
Also for $1 \le j \le 3$, let $S^{(j)}(g) : \mathbb{R}^3 \to \mathbb{R}$ be defined  by
\begin{align*}
S^{(1)}(g)(x)&=\overline{g}(x)-\phi_1(x_1)\int_{\mathbb{R}} \overline{g}(s_1,x_2,x_3)\myd{s_1},\\
S^{(2)}(g)(x)&=\phi_1(x_1)\left[\int_{\mathbb{R}} \overline{g}(s_1,x_2,x_3)\myd{s_1}-\phi_2(x_2)\int_{\mathbb{R}^2} \overline{g}(s_1,s_2,x_3)\myd{s_1}ds_2 \right],\\
S^{(3)}(g)(x)&=\phi_1(x_1)\phi_2(x_2)\left[\int_{\mathbb{R}^2}
\overline{g}(s_1,s_2,x_3)\myd{s_1}ds_2 \right].
\end{align*}
Then it is easy to check that $S^{(j)}$ is a linear operator from $\mathscr{S}(\mathbb{R}^3)$ into $\mathscr{S}_{\#}^{(j)}$ and that if $g_k\rightarrow 0$ in $\mathscr{S}$, then $S^{(j)}(g_k)\rightarrow 0$ in $\mathscr{S}$.

Now, for each $g\in \mathscr{S} $, we define
\[
\mathcal{B}(g)= \left( T^{(1)}S^{(1)}(g),T^{(2)}S^{(2)}(g),T^{(3)}S^{(3)}(g) \right).
 \]
Then by Lemma \ref{lem:schwartz-integral}, $\mathcal{B}$ is a continuous linear operator from $\mathscr{S}(\mathbb{R}^3)$ to $\mathscr{S}(\mathbb{R}^3;\mathbb{R}^3)$.  Moreover, by a direct computation,
\[	\Div\mathcal{B}(g)=S^{(1)}(g)+S^{(2)}(g)+S^{(3)}(g)=\overline{g}
\quad \mbox{in}\,\,\mathbb{R}^3 .\]
This completes the proof of Lemma \ref{lem:representation}.
\end{proof}

\begin{proof}[Proof of Lemma \ref{lem:de-Rham}]
Suppose that $\boldu \in \mathscr{S}'$   satisfies  $\action{\boldu,\Phi}=0$ for all $\Phi \in C_{c,\sigma}^\infty$. Then by a simple density argument, we have
\begin{equation}\label{u-div-cond}
\action{\boldu,\Phi}=0 \quad\mbox{for all } \Phi \in \mathscr{S} \,\,\mbox{with} \,\,\Div \Phi=0 .
\end{equation}

Let $\mathcal{B} : \mathscr{S}(\mathbb{R}^d) \to \mathscr{S}(\mathbb{R}^d;\mathbb{R}^d)$ be the   operator defined in  Lemma \ref{lem:representation}.
Then since $\mathcal{B}$ is linear and continuous, the mapping
\[	g \mapsto -\inner{\boldu,\mathcal{B}(g)} \]
defines a continuous linear functional $p$ on $ \mathscr{S}$.

Given $\Phi \in \mathscr{S}$, we define $g = \Div \Phi$ and $\Psi = \mathcal{B}(g) -\Phi$. Then $\Psi \in \mathscr{S}$. Moreover, since $\int_{\mathbb{R}^d} g \myd{x} =0$, we have
\[	\Div \Psi = \Div\mathcal{B}(g) - \Div \Phi =   0 \quad \mbox{in}\,\,\mathbb{R}^d .\]
Hence it follows from (\ref{u-div-cond}) that $\action{\boldu,\Psi} =0$.
Therefore,
\[
\action{\boldu,\Phi}=    \action{\boldu,\mathcal{B}(g)} = - \action{p,g} = - \action{p,\Div \Phi}.
\]
Since $\Phi \in \mathscr{S}$ is arbitrary, we deduce that $\boldu=\nabla p$.
  This completes the proof of Lemma \ref{lem:de-Rham}.
\end{proof}

\bibliographystyle{amsplain}
\providecommand{\bysame}{\leavevmode\hbox to3em{\hrulefill}\thinspace}
\providecommand{\MR}{\relax\ifhmode\unskip\space\fi MR }
\providecommand{\MRhref}[2]{%
  \href{http://www.ams.org/mathscinet-getitem?mr=#1}{#2}
}
\providecommand{\href}[2]{#2}

\end{document}